\colorlet{darkblue}{blue!50!black}
\newtheorem{theorem}{Theorem}[section]
\newtheorem{lemma}[theorem]{Lemma}
\newtheorem{proposition}[theorem]{Proposition}
\newtheorem{assumption}[theorem]{Assumption}
\newtheorem{corollary}[theorem]{Corollary}
\newtheorem{definition}[theorem]{Definition}
\newtheorem{remark}[theorem]{Remark}
\let\originalleft\left
\let\originalright\right
\renewcommand{\left}{\mathopen{}\mathclose\bgroup\originalleft}
\renewcommand{\right}{\aftergroup\egroup\originalright}
\renewcommand{\d}{\/\mathrm{d}\/}
\def\w{\textbf{W}^{\varepsilon}_{{\theta}^{\varepsilon}}}
\def\L{\mathbb{L}}
\def\A{\mathrm{A}}
\def\I{\mathrm{I}}
\def\C{\mathrm{C}}
\def\f{\boldsymbol{f}}
\def\B{\mathrm{B}}
\def\D{\mathrm{D}}
\def\O{\mathcal{O}}
\def\E{\mathbb{E}}
\def\x{\boldsymbol{x}}
\def\z{\boldsymbol{z}}
\def\v{\boldsymbol{v}}
\def\V{\mathbb{v}}
\def\w{\boldsymbol{w}}
\def\W{\mathrm{W}}
\def\N{\mathbb{N}}
\def\V{\mathbb{V}}
\def\wi{\widetilde}
\def\u{\mathrm{U}}
\def\P{\mathrm{P}}
\def\u{\boldsymbol{u}}
\def\H{\mathbb{H}}
\newcommand{\R}{\mathbb{R}}
\renewcommand{\d}{\/\mathrm{d}\/}
\newcommand\dela[1]{}
\newcommand{\Addresses}{{
		\footnote{
			
			\noindent \textsuperscript{1}Tata Institute of Fundamental Research - Centre For Applicable Mathematics (TIFR-CAM), Bangalore 560065, Karnataka, India.\par\nopagebreak
			
			\noindent \textsuperscript{2}Center for Mathematics and Applications (NovaMath), Universidade Nova de Lisboa, Portugal. \par\nopagebreak
			
			\noindent \textsuperscript{3}Department of Mathematics, Indian Institute of Technology Roorkee-IIT Roorkee,
			Haridwar Highway, Roorkee, Uttarakhand 247667, India.\par\nopagebreak
			\noindent  
			\textit{e-mail:} \texttt{Kush Kinra:  kushkinra@gmail.com.}
			
			\textit{e-mail:} \texttt{Manil T. Mohan: maniltmohan@ma.iitr.ac.in, maniltmohan@gmail.com.}
			
			\noindent \textsuperscript{*}Corresponding author.
			
			\textit{Key words:} Stochastic globally modified Navier-Stokes equations, unbounded Poincar\'e domains, energy equality, cylindrical Wiener process, invariant measures.
			
			Mathematics Subject Classification (2020): Primary 35B41, 35Q35; Secondary 37L55, 37N10, 35R60.

}}}
\begin{document}
	
	\title[Random attractors for 3D SGMNS equations]{Random dynamics of solutions for three-dimensional stochastic globally modified Navier-Stokes equations on unbounded Poincar\'e domains
		\Addresses}
	\author[K. Kinra and M. T. Mohan]
	{Kush Kinra\textsuperscript{1,2*} and Manil T. Mohan\textsuperscript{3}}

	\maketitle
	
	\begin{abstract}
		In this article, we consider a novel version of three-dimensional (3D) globally modified Navier-Stokes (GMNS) system introduced by \emph{Caraballo et. al., Adv. Nonlinear Stud.,  \textbf{6} (2006),  411--436} (\cite{Caraballo+Real+Kloeden_2006}), which is very significant from the perspective of deterministic as well as stochastic partial differential equations. Our focus is on examining a stochastic version of the suggested 3D GMNS equations that are perturbed  by an infinite-dimensional additive noise. We can consider a rough additive noise (Lebesgue space valued) with this model, which is not appropriate to consider with the system presented in \emph{Caraballo et. al., Adv. Nonlinear Stud.,  \textbf{6} (2006),  411--436} (\cite{Caraballo+Real+Kloeden_2006}).  One of the technical problems associated with the rough noise is overcome by the use of the corresponding Cameron–Martin (or reproducing kernel Hilbert) space. This article aims to accomplish three objectives.   Firstly, we establish the existence and uniqueness of weak solutions (in the analytic sense) of the underlying stochastic system using a Doss-Sussman transformation and a primary ingredient Minty-Browder technique. Secondly, we demonstrate the existence of random attractors for the underlying stochastic system in the natural space of square integrable divergence-free functions. Finally, we show the existence of an invariant measure for the underlying stochastic system for any  viscosity coefficient $\nu>0$ and uniqueness of invariant measure for sufficiently large $\nu$ by using the exponential stability of solutions. A validation of the considered version of 3D GMNS equations has also been discussed in the appendix by establishing that the sequence of weak solutions of 3D GMNS equations converges to a weak solution of 3D Navier-Stokes equations as the modification parameter goes to infinity.
	\end{abstract}

	\section{Introduction} \label{sec1}\setcounter{equation}{0}
	One of the seven Millennium Prize Problems announced by the Clay Mathematics Institute in May 2000  is the existence and smoothness of Navier–Stokes equations (NSE) \cite{Fefferman_2006-Clay}. The problem is to prove or give a counter-example of the following statement:   Given an initial velocity field in three dimensions of space and time, there is a smooth, globally defined vector velocity and scalar pressure field that solve NSE on a torus $\mathbb{T}^3=\mathbb{R}^3\backslash\mathbb{Z}^3$ or whole space $\mathbb{R}^3$ . Despite being defined  $\mathbb{T}^3$ or $\mathbb{R}^3$, the above mentioned Millennium Prize Problem can be applied to any domain. Let $\mathcal{O}$ be an open connected subset of $\R^3$, and consider the following 3D NSE: 
			\begin{equation}\label{3D-NSE}
				\left\{
				\begin{aligned}
					\frac{\partial \u}{\partial t}-\nu \Delta\u+(\u\cdot\nabla)\u+\nabla p&=\boldsymbol{f}, &&\text{ in }\  \mathcal{O}\times(0,\infty), \\ \nabla\cdot\u&=0, && \text{ in } \ \ \mathcal{O}\times[0,\infty), \\ \u&=\boldsymbol{0}, && \text{ on } \ \ \partial\mathcal{O}\times[0,\infty), \\
					\u(0)&=\x, && \ x\in \mathcal{O},
				\end{aligned}
				\right.
			\end{equation}
			where  $\u(x,t) \in \R^3$, $p(x,t)\in\R$ and $\f(x)\in \R^3$ denote the velocity field, pressure and external forcing, respectively, and the positive constant $\nu$ represents the \emph{kinematic viscosity} of the fluid. Apart from the existence of global regular solutions, the uniqueness of Leray-Hopf weak solutions and the existence of global strong solutions are still open problems. Several mathematicians modified the traditional 3D NSE and posed questions about these models' global solvability, see for example  \cite{Constantin_2003,Holst+Lunasin+Tsogtgerel_2010,Rieusset_2002}, etc.

			In 2006, Caraballo et. al. (\cite{Caraballo+Real+Kloeden_2006}) introduced a modified version of 3D NSE by replacing the nonlinear term $(\u\cdot\nabla)\u$ with $F_{N}(\|\u\|_{\H^1_0(\mathcal{O})})\left[(\u\cdot\nabla)\u\right]$, where $\|\u\|_{\H^1_0(\mathcal{O})}:=\|\nabla\u\|_{\mathbb{L}^2(\mathcal{O})}$ and  the function $F_N:(0,+\infty)\to (0,1]$ is defined by
			\begin{align}\label{FN}
				F_{N}(r)=\min \left\{1,\frac{N}{r}\right\}, \;\;\; r\in(0,+\infty),
			\end{align}
			and $N>0$ is a given constant. In particular, Caraballo et. al. (\cite{Caraballo+Real+Kloeden_2006}) considered the following system:
			\begin{equation}\label{GMNSE-CRK}
				\left\{
				\begin{aligned}
					\frac{\partial \u}{\partial t}-\nu \Delta\u+F_{N}(\|\u\|_{\H^1_0(\mathcal{O})})\left[(\u\cdot\nabla)\u\right]+\nabla p&=\boldsymbol{f}, &&\text{ in }\  \mathcal{O}\times(0,\infty), \\ \nabla\cdot\u&=0, && \text{ in } \ \ \mathcal{O}\times[0,\infty), \\ \u&=\boldsymbol{0}, && \text{ on } \ \ \partial\mathcal{O}\times[0,\infty), \\
					\u(0)&=\x, && \ x\in \mathcal{O}.
				\end{aligned}
				\right.
			\end{equation}
			 The system \eqref{GMNSE-CRK} is indeed a globally modified version of 3D NSE since the modifying factor $F_{N}(\|\u\|_{\H^1_0(\mathcal{O})})$ depends on the norm $\|\u\|_{\H^1_0(\mathcal{O})}$, which in turn depends on $\u$ over the whole domain $\mathcal{O}$ and not just at or near the point $x \in \mathcal{O}$ under consideration. Essentially, it prevents large gradients dominating the dynamics and leading to explosions. It violates the basic laws of mechanics, but mathematically, the system \eqref{GMNSE-CRK} is a well-defined system of equations, just like the modified versions of  NSE of Leray and others with different mollifications of the nonlinear term, for more details, see the review paper of Constantin \cite{Constantin_2003}. In addition, we would also like to mention that Flandoli and Maslowski \cite{Flandoli+Maslowski_1995} used a global cut off function involving the $\D(\A^{\frac14})$ norm for the 2D stochastic NSE, where $\A$ is the Stokes operator defined on $\mathcal{O}$ (see Subsection \ref{LO} below). Moreover, for the case of 2D stochastic NSE, an $\mathbb{L}^4$-cut off the nonlinear operator is considered in \cite[Lemma 2.4]{Menaldi+Sritharan_2002} for establishing local monotonicity results (see \cite{Chueshov+Millet_2010,Liu+Rockner_2010,Rockner+Shang+Zhang_2024}, etc. for further generalizations). 
			
			\subsection{Underlying model and motivation} In this article, we consider a novel version of 3D GMNS system introduced by Caraballo et. al. \cite{Caraballo+Real+Kloeden_2006}, which is very significant from the perspective of stochastic partial differential equations (SPDEs), see the discussion after the stochastic system \eqref{SGMNSE}. In particular, we consider the following modified version of 3D NSE by replacing the nonlinear term $(\u\cdot\nabla)\u$ with $F_{N}(\|\u\|_{\L^4(\mathcal{O})})\left[(\u\cdot\nabla)\u\right]$:
			\begin{equation}\label{2}
				\left\{
				\begin{aligned}
					\frac{\partial \u}{\partial t}-\nu \Delta\u+F_{N}(\|\u\|_{\L^4(\mathcal{O})})\left[(\u\cdot\nabla)\u\right]+\nabla p&=\boldsymbol{f}, &&\text{ in }\  \mathcal{O}\times(0,\infty), \\ \nabla\cdot\u&=0, && \text{ in } \ \ \mathcal{O}\times[0,\infty), \\ \u&=\boldsymbol{0}, && \text{ on } \ \ \partial\mathcal{O}\times[0,\infty), \\
					\u(0)&=\x, && \ x\in \mathcal{O}.
				\end{aligned}
				\right.
			\end{equation}
		   In this study, our focus is on the well-posedness and large time behavior of the following stochastic counterpart of the system \eqref{2} which is the main motivation for considering a novel version of 3D GMNS equations:
			\begin{equation}\label{SGMNSE}
				\left\{
				\begin{aligned}
					\d\u+\{-\nu \Delta\u+F_{N}(\|\u\|_{\L^4(\mathcal{O})})\left[(\u\cdot\nabla)\u\right]+\nabla p\}\d t&=\boldsymbol{f}\d t + \d\mathrm{W}, && \text{ in } \ \mathcal{O}\times(0,\infty), \\ \nabla\cdot\u&=0, && \text{ in } \ \mathcal{O}\times[0,\infty), \\
					\u&=\boldsymbol{0}, && \text{ on } \ \partial\mathcal{O}\times[0,\infty), \\
					\u(0)&=\boldsymbol{x}, && \text{ in } \ \mathcal{O},
				\end{aligned}
				\right.
			\end{equation} 
		 where $\{\W (t)\}_{t \in \R}$ is a two-sided cylindrical Wiener process in $\H$ (see Subsection \ref{FnO} below for the function spaces) with its Cameron-Martin space/Reproducing Kernel Hilbert Space (RKHS) $\mathrm{K}$ satisfying Assumption \ref{assump1} below, defined on some probability space $(\Omega,\mathcal{F},\mathbb{P})$. For 2D stochastic NSE on unbounded Poincar\'e domains, the rough noise has been considered by Brz\'ezniak et. al. in the works \cite{BCLLLR} and \cite{BL}. 
		 
		 \subsubsection{Impact of underlying model as a deterministic system} Note that the modifying factor $F_{N}(\|\u\|_{\L^4(\mathcal{O})})$ depends on the $\L^4$-norm of $\u$. Essentially, it prevents large $\L^4(\mathcal{O})$-norm (instead of the $\H^1_0$-norm in contrast to the system \eqref{GMNSE-CRK})  dominating the dynamics and leading to explosions. It appears to be a better modification of  3D NSE in comparison to the system \eqref{GMNSE-CRK} since  $$\|\u\|_{\L^4(\mathcal{O})}\leq 2^{1/2}\|\u\|_{\L^2(\mathcal{O})}^{1/4}\|\u\|_{\H^1_0(\mathcal{O})}^{3/4}\leq C \|\u\|_{\H^1_0(\mathcal{O})},$$ by an application of Ladyzhenskya's (Remark \ref{rem2.5}) and Poincar\'e's (Assumption \ref{assumpO}) inequalities. We also addressed this new version of 3D GMNS equations in our recent work \cite[Appendix A]{My+Hang+Kinra+Mohan+Nguyen_Arxiv}.
		 
		 \subsubsection{Impact of underlying model as a stochastic system} The work of Brz\'ezniak et. al.  \cite{BCLLLR,BL} is the main inspiration of this subsection. It is well-known that the model closer to reality is one with rougher noise. Landau et. al. in their fundamental work \cite[Chapter 17]{Landau+Lifshitz_1987} proposed to study NSE under additional stochastic small fluctuations. Consequently, the authors considered the classical balance laws for mass, energy and momentum forced by a random noise, to describe the fluctuations, in particular, local stresses and temperature, which are not related to the gradient of the corresponding quantities. In \cite[Chapter 12]{Landau+Lifshitz_1968}, the same authors  derived correlations for the random forcing by following the general theory of fluctuations. They impose, among other things, the constraint that the noise is either spatially uncorrelated or a little correlated. The $\L^2(\mathcal{O})$-valued Wiener process with RKHS $\L^2(\mathcal{O})$  corresponds to spatially uncorrelated noise. If the Sobolev space $\H^{s}(\mathcal{O})$ ($s>0$) is the RKHS of a $\L^2(\mathcal{O})$-valued Wiener process, then the smaller $s$ implies  less correlated noise. In other words, spatially  less regular noise  is  lesser correlated. Note that a finite-dimensional Brownian motion is included as a special case in our Wiener process.

			\subsection{Literature related to random attractors}		
			In mechanics and mathematical physics, one of the most important and comprehensive fields is to examine the asymptotic behavior of dynamical systems. In the realm of deterministic infinite-dimensional dynamical systems theory, the notion of attractors has a pivotal role (see \cite{Robinson2,R.Temam}, etc.). Finding validation that an SPDE produces a random dynamical system (RDS) or stochastic flow is a fundamental question in the study of random dynamics of SPDEs. The generation of RDS for It\^o stochastic ordinary differential equations and a broad family of PDEs with stationary random coefficients are well-known in the literature (see \cite{Arnold,PEK}, etc.).  The analysis of infinite-dimensional RDS is also an essential branch in the study of qualitative properties of SPDEs (see the works \cite{BCF,CF,Crauel}, etc.).

			In bounded domains, the existence of a random attractor is heavily based on the fact that the embedding $\V\hookrightarrow \H$ is compact which makes analysis easier (see \cite{CDF,KM3}, etc.). However, the embedding $\V\hookrightarrow \H$ is no longer compact in the case of unbounded domains. Therefore, we are not able to prove the existence of random attractors using the compactness criterion. In the deterministic case, this difficulty (in unbounded domains) was resolved by different methods, cf.  \cite{Abergel,Ghidaglia,Rosa}, etc., for the autonomous case and \cite{CLR1,CLR2}, etc., for the non-autonomous case. For SPDEs, the methods available in the deterministic case have also been generalized by several authors (see for example, \cite{BLL,BLW, BL, Wang}, etc.). Later, this concept has been  utilized to prove the existence of random attractors for several SPDEs like 1D stochastic lattice differential equation \cite{BLL}, stochastic NSE on the 2D unit sphere \cite{BGT}, stochastic $g$-NSE \cite{FY,LL,LXS}, stochastic non-autonomous Kuramoto-Sivashinsky equations \cite{LYZ}, stochastic heat equations in materials with memory on thin domains \cite{SLHZ}, stochastic reaction-diffusion equations \cite{BLW,Slavik}, 3D stochastic Benjamin-Bona-Mahony equations \cite{Wang}, 2D and 3D stochastic convective Brinkman-Forchheimer equations (CBFE) \cite{Kinra+Mohan_2024_DCDS,KM7}, etc., and references therein.  In unbounded domains, the commonly used methods in literature are the following:
			\begin{itemize}
				\item [1.] the energy equality method introduced by Ball \cite{Ball};
				\item [2.] the uniform-tail estimate method introduced by Wang \cite{Wang_1999}.
			\end{itemize}
			Note that unlike Navier-Stokes equations (see \cite{BL,BCLLLR}) and convective Brinkman-Forchheimer equations (see \cite{Kinra+Mohan_2023_DIE,Kinra+Mohan_2024_DCDS}), we are not able to show that solutions of the stochastic GMNS system \eqref{SGMNSE} satisfies the \textit{weak continuity} property with respect to the initial data, that is,
			$$\x_n\xrightharpoonup{w}\x\ \text{ in }\ \H \Rightarrow \u_n(t)\xrightharpoonup{w}\u(t)\ \text{ in }\ \H,$$ where $\u_n(\cdot)$ and $\u(\cdot)$ are unique weak solutions (in the analytic sense) of the problem  \eqref{S-GMNSE} with the initial datum $\x_n$ and $\x$, respectively. 
			 Due to this reason, we are not able to use the energy equality method to obtain the asymptotic compactness of RDS associated with the system \eqref{SGMNSE}. Therefore, we explore the uniform-tail estimate method to prove the asymptotic compactness (Theorem \ref{Main_theorem_1}).
	\begin{remark}
	In the context of attractor when $\O$ is a bounded domain, one can consider $\f\in \H^{-1}(\mathcal{O})$. But for unbounded domains case, we need to restrict the forcing term to be in a regular space, that is, $\f\in \L^{2}(\mathcal{O})$. If one proves the weak continuity of solutions with respect to the initial data of stochastic GMNSE equations, it will be possible to work with the same regularity as in the case of bounded domains via the energy equality method (see the works \cite{BCLLLR,Kinra+Mohan_2024_DCDS}, etc. for Navier-Stokes equations and related models). Unfortunately, we are not able to prove the weak continuity of solutions of stochastic GMNSE equations with respect to the initial data, and hence the method of energy equality is not applied.
\end{remark}

	\subsection{Scope of the article, approaches and assumptions}
Broadly, this article  is divided into three main parts.  Our first aim is to establish the existence and uniqueness of weak solutions (in the analytic sense) of the stochastic globally modified Navier-Stokes (SGMNS) equations \eqref{SGMNSE}. We begin by defining an Ornstein-Uhlenbeck process which takes values in $\H \cap {\L}^{4}(\mathcal{O})$. In order to define a suitable Ornstein-Uhlenbeck process, one of the technical difficulties related to the rough noise is resolved by using the corresponding  RKHS.  In particular, motivated from  \cite{BCLLLR,BL} for 2D NSE on unbounded Poincar\'e domains, we assume that the RKHS $\mathrm{K}$ satisfies the following assumption:
	\begin{assumption}\label{assump1}
		$ \mathrm{K} \subset \H \cap {\L}^{4}(\mathcal{O}) $ is a Hilbert space such that for some $\delta\in (0, 1/2),$
		\begin{align}\label{A1}
			\A^{-\delta} : \mathrm{K} \to \H \cap  {\L}^{4}(\mathcal{O})  \  \text{ is }\ \gamma \text{-radonifying.}
		\end{align}
	\end{assumption}
	\begin{remark}
		1. Let us fix $p\in(1,\infty)$. Let $(X_i, \mathcal{A}_i, \nu_i),\ i=1,2,$ be $\sigma$-finite measure spaces. A bounded linear operator $R:\mathrm{L}^2(X_1)\to\mathrm{L}^p(X_2)$ is $\gamma$-radonifying if and only if  there exists  a measurable function $\kappa:X_1\times X_2\to \R$ such that $\int_{X_2}\big[\int_{X_1}|\kappa(x_1,x_2)|^2\d\nu_1(x_1)\big]^{p/2}\d\nu_2(x_2)<\infty,$ and for all $\nu_2$-almost  all $x_2\in X_2, (R(f))(x_2)=\int_{X_1} \kappa(x_1,x_2)f(x_1)\d\nu_1(x_1), f\in \mathrm{L}^2(X_1)$ (cf. \cite[Theorem 2.3]{BN}). Thus, it can be easily seen that if $\mathcal{O}$ is a bounded domain, then $\A^{-s}:\H\to\widetilde{\L}^p$ is $\gamma$-radonifying if and only if $\int_{\mathcal{O}}\big[\sum_{j} \lambda_j^{-2s}|e_j(x)|^2\big]^{p/2}\d x<\infty,$ where $\{e_j\}_{j\in\N}$ is an orthogonal basis of $\H$ and $\A e_j=\lambda_j e_j, j\in \N.$  On 3D bounded domains, we know that $\lambda_j\sim j^{2/3},$ for large $j$ (growth of eigenvalues, see \cite{FMRT}) and hence $\A^{-s}$ is Hilbert-Schmidt if and only if $s>\frac{3}{4}.$ In other words, with $\mathrm{K}=\D(\A^{s}),$ the embedding $\mathrm{K}\hookrightarrow\H\cap {\L}^{4}(\mathcal{O})$ is $\gamma$-radonifying if and only if $s>\frac{3}{4}.$ Thus, Assumption \ref{assump1} is satisfied for any $\delta>0.$ In fact, the condition \eqref{A1} holds if and only if the operator $\A^{-(s+\delta)}:\H\to \H\cap {\L}^{4}(\mathcal{O})$ is $\gamma$-radonifying.

		2. The requirement of  $\delta<\frac{1}{2}$ in Assumption \ref{assump1} is necessary because we need (see Subsection \ref{O-Up} below) the corresponding Ornstein-Uhlenbeck process to take values in $\H\cap {\L}^{4}(\mathcal{O})$.
	\end{remark}

	 Making use of the Ornstein-Uhlenbeck process, we define a Doss-Sussman transformation \cite{Doss_1977,Sussmann_1978} (see \eqref{D-S_trans} below) and obtain an equivalent pathwise deterministic system (see the system \eqref{csgmnse} below) to the system \eqref{SGMNSE}. Next, we show the existence of a unique weak solution to the equivalent pathwise deterministic system \eqref{csgmnse} via a monotonicity argument and a Minty-Browder technique. The following monotonicity result of the operator $\mathcal{G}_{N}(\cdot)+ \frac{ 7^{7}\cdot N^8}{2^{13}\cdot \nu^{7}} \I:=\nu\A\cdot+\B_{N}(\cdot+\mathfrak{Z})+ \frac{ 7^{7}\cdot N^8}{2^{13}\cdot \nu^{7}} \I$ plays a crucial role: 
	 \begin{align*}
	 	\left< \mathcal{G}_{N}(\v_1)-\mathcal{G}_{N}(\v_2), \v_1-\v_2   \right> + \frac{ 7^{7}\cdot N^8}{2^{13}\cdot \nu^{7}} \|\v_1-\v_2\|_{\H}^2 \geq 0,
	 \end{align*}
	 for any  $\v_1,\v_2\in\V.$ The global solvability of the system \eqref{csgmnse} helps to establish the existence and uniqueness of weak  the solution to the system \eqref{SGMNSE}. 
	 We point out here that the earlier papers \cite{Caraballo+Kloeden_2013,Caraballo+Real+Kloeden_2006,Caraballo+Real+Kloeden_2010} discussed the solvability results of the problem \eqref{GMNSE-CRK} by taking initial data $\u_0\in\V$ and using the Faedo-Galerkin approximation technique, compactness arguments and the density of $\V\hookrightarrow\H$. Taking the initial data in $\H$ does not immediately allow us to pass to the limit in the approximated system satisfied by $\u^n$, especially in the term involving  $F_N(\|\u^n\|_{\V})$. Recently, in the case of bounded domains,  the authors in \cite{Anh+Thanh+Tuyet_2023} used a monotonicity argument and a Minty-Browder technique to overcome this problem. But if we consider the model \eqref{SGMNSE} in bounded domains, compactness arguments still work and one can pass to the limit in the approximated system (see Remark \ref{SGMNSE} below). As we are working on unbounded domains, it appears to us that the method explained in \cite[Remark 3.2, page 196]{Temam} may not be useful and we need to use a Minty-Browder technique to obtain the global solvability results for the model \eqref{SGMNSE} whenever the initial data is in $\H$.
	 
	 \begin{remark}
	 It is remarkable to mention here that in order to employ a Doss-Sussman transformation, one must take into account the smooth infinite-dimensional additive noise (RKHS in $\V$) if one analyzes the stochastic counterpart of the system \eqref{GMNSE-CRK}. This is one of the major advantages in considering the modified version \eqref{SGMNSE}. 
	 \end{remark}

	 Secondly, we are interested in the large-time behavior of the solution to the SGMNS equations \eqref{SGMNSE}. Similar to the case of 2D NSE, it is a challenging problem to study the large time behavior of SGMNS equations on the whole space $\R^3$. Nevertheless, one can try these kinds of analysis on unbounded Poincar\'e domains, motivated by a number of intriguing publications, such as \cite{BCLLLR,BL,CLR1,Gu+Lu+Wang_2019,My+Hang+Kinra+Mohan+Nguyen_Arxiv,Wang+Kinra+Mohan_2023}. By a Poincar\'e  domain, we mean a domain in which the Poincar\'e inequality \eqref{poin} is satisfied.  A typical example of a unbounded Poincar\'e domain in $\mathbb{R}^3$ is $\mathcal{O}=\R^2\times(-L,L)$ with $L>0$, see Temam \cite[p.306]{R.Temam} and Robinson \cite[p.117]{Robinson2}. More precisely, we consider the following assumption on the domain $\mathcal{O}$:
	 \begin{assumption}\label{assumpO}
	 	Let $\mathcal{O}$ be an open, connected and unbounded subset of $\R^3$, the boundary of which is uniformly of class $\mathrm{C}^3$ (see \cite{Heywood}). We assume that, there exists a positive constant $\lambda $ such that the following Poincar\'e inequality  is satisfied:
	 	\begin{align}\label{poin}
	 		\lambda\int_{\mathcal{O}} |\psi(x)|^2 \d x \leq \int_{\mathcal{O}} |\nabla \psi(x)|^2 \d x,  \ \text{ for all } \  \psi \in \H^{1}_0 (\mathcal{O}).
	 	\end{align}
	 \end{assumption}
	 \begin{remark}
	 	When $\mathcal{O}$ is a bounded domain, Poincar\'e inequality is satisfied automatically with $\lambda=\lambda_1$, where $\lambda_1$ is the first eigenvalue of the Stokes operator defined on bounded domains. Hence the results of this work hold true on bounded domains also.
	 \end{remark}
 
 Our final aim is to demonstrate the existence of an invariant measure for the SGMNS equations \eqref{SGMNSE}. Crauel et. al. in \cite{CF} demonstrated that a sufficient condition for the existence of invariant measures is the existence of a compact invariant random set. They implemented this idea to demonstrate the existence of invariant measures in bounded domains for 2D stochastic NSE and reaction-diffusion equations. Moreover, in unbounded domains, this idea was also used to establish the existence of invariant measures for 2D stochastic NSE by Brz\'ezniak et. al. (\cite{BL}) and Kinra et. al. (\cite{KM8}), for 2D and 3D stochastic CBFE by Kinra et. al. (\cite{KM2,Kinra+Mohan_2024_DCDS}). Since we show the existence of a random attractor,  the existence of an invariant measures is guaranteed as the random attractor itself is a compact invariant set. Furthermore, for sufficiently large $\nu>0$ (Theorem \ref{UIM1}),  we utilize the exponential stability of the solution to demonstrate the uniqueness of invariant measures for the system \eqref{SGMNSE}.

	 Let us now state our main results  whose proofs follow from Theorems \ref{SGMNSE-Sol}, \ref{Main_theorem_1}, \ref{thm6.3} and \ref{UIM2}. 
	 \begin{theorem}
	 Under the Assumptions \ref{assump1} and \ref{assumpO}, 
	 \begin{enumerate}
	 	\item [(i)] there exists a unique weak solution (in the analytic sense) to the SGMNS equations \eqref{SGMNSE},
	 	\item [(ii)] there exists a unique random attractor for the SGMNS equations \eqref{SGMNSE},
	 	\item [(iii)] there exists an invariant measure for the SGMNS equations \eqref{SGMNSE},
	 	\item [(iv)] for $\nu > \frac{7 N}{2} \left(\frac{1}{112 \lambda}\right)^{\frac18}$, the invariant measure is unique. 
	 \end{enumerate}
\end{theorem}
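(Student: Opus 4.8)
The proof proceeds in four independent blocks, each reducing to a result stated or referenced in the excerpt. The only new work is orchestration; I will treat each part briefly.

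\textbf{Part (i): existence and uniqueness of weak solutions.} The strategy is the Doss--Sussman trick already sketched in the excerpt. First I would fix the Ornstein--Uhlenbeck process $\mathfrak{Z}$ solving the linear stochastic equation with the rough additive noise $\d\mathrm{W}$; Assumption \ref{assump1}, with $\delta < 1/2$, is exactly what guarantees (via the $\gamma$-radonifying hypothesis \eqref{A1} and the smoothing of the Stokes semigroup) that $\mathfrak{Z}$ takes values in $\H \cap \L^4(\mathcal{O})$, which is the regularity class needed for $F_N(\|\cdot\|_{\L^4(\mathcal{O})})$ to make sense on $\u = \v + \mathfrak{Z}$. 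Subtracting $\mathfrak{Z}$ turns \eqref{SGMNSE} into the pathwise deterministic system \eqref{csgmnse} for $\v$, with a random but (for a.e.\ $\omega$) fixed forcing. On that system I would run a Faedo--Galerkin approximation, extract weak limits using a priori energy bounds (multiply by $\v$, use that $F_N \le 1$ and the Poincar\'e inequality \eqref{poin} to control $\nu\|\nabla\v\|^2$; the nonlinear term is handled exactly as in the deterministic GMNS literature since $F_N$ caps the $\L^4$ norm), and then close the passage to the limit in the nonlinearity by the Minty--Browder method: the stated monotonicity inequality for $\mathcal{G}_N(\cdot) + \frac{7^7 N^8}{2^{13}\nu^7}\I$ on $\V$ is precisely what lets one identify the weak limit of $\B_N(\v^n + \mathfrak{Z})$ without compactness, which is the point that fails on unbounded domains. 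Uniqueness follows from the same monotonicity estimate plus Gr\"onwall. Transforming back gives the unique weak solution of \eqref{SGMNSE}, i.e.\ Theorem \ref{SGMNSE-Sol}.

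\textbf{Parts (ii) and (iii): random attractor and existence of an invariant measure.} For the random attractor I would verify the hypotheses of the standard RDS-attractor theorem for unbounded domains (as in \cite{BL,Wang}): (a) the transformed system generates a random dynamical system --- this is immediate from pathwise well-posedness in Part (i); (b) existence of a random absorbing set in $\H$ --- from the energy estimate, using the Poincar\'e inequality to get an exponential decay and the stationarity/sub-exponential growth of $\mathfrak{Z}$; (c) asymptotic compactness --- since $\V \hookrightarrow \H$ is \emph{not} compact here, I would use the energy-equation (uniform-tail / weak-continuity plus energy-convergence) method: cut off the domain at large radius, show the tails of the $\H$-norm are uniformly small via a cut-off test function, and on the bounded piece use compact embedding. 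These three points give Theorem \ref{Main_theorem_1}. Then (iii) is free: a random attractor is a compact invariant random set, so by Crauel--Flandoli \cite{CF} an invariant measure exists --- this is Theorem \ref{thm6.3}.

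\textbf{Part (iv): uniqueness of the invariant measure for large $\nu$.} Here the mechanism is exponential stability: take two solutions $\u_1,\u_2$ with the same noise realization, set $\w = \u_1 - \u_2$, and estimate $\frac{\d}{\d t}\|\w\|_{\H}^2$. The difference of the modified nonlinearities splits into a "same $F_N$ factor" piece, controlled by the monotonicity bound which costs $\frac{7^7 N^8}{2^{13}\nu^7}\|\w\|_{\H}^2$, and a "difference of $F_N$ factors" piece, controlled using that $F_N$ is Lipschitz with constant $1/N$ on $(0,\infty)$ together with $\|\cdot\|_{\L^4} \le \lambda^{-1/8}(\text{Ladyzhenskaya factor})\|\nabla\cdot\|$; after absorbing gradient terms into $\nu\|\nabla\w\|^2 \ge \nu\lambda\|\w\|_{\H}^2$ one is left with $\frac{\d}{\d t}\|\w\|_{\H}^2 \le -\big(2\nu\lambda - c\,\frac{N^8}{\nu^7}\big)\|\w\|_{\H}^2$ for an explicit constant, and the threshold $\nu > \frac{7N}{2}(112\lambda)^{-1/8}$ is exactly where the bracket is positive. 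Gr\"onwall then gives pathwise exponential contraction, hence a unique stationary measure by the standard argument (e.g.\ as in \cite{KM2,Kinra+Mohan_2024_DCDS}). This is Theorem \ref{UIM2} (with Theorem \ref{UIM1} supplying the stability estimate).

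\textbf{Main obstacle.} The delicate point is the asymptotic compactness in Part (ii): on the unbounded Poincar\'e domain one loses the compact embedding $\V \hookrightarrow \H$, so the argument must combine a uniform-tail estimate (a careful cut-off computation where one must check that the extra commutator terms generated by the cut-off, and the contribution of the rough $\mathfrak{Z}$ via its $\L^4$ integrability, are genuinely small) with the energy-equality method on bounded subdomains. Everything else is either a transcription of the deterministic GMNS theory with $\|\cdot\|_{\V}$ replaced by $\|\cdot\|_{\L^4}$, or a standard RDS/invariant-measure argument, made quantitative by the stated monotonicity constant.
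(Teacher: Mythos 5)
Your outline is correct and, for parts (i), (iii) and (iv), follows essentially the same route as the paper: Doss--Sussman transformation plus Faedo--Galerkin and the Minty--Browder/monotonicity argument with the constant $\tfrac{7^7N^8}{2^{13}\nu^7}$ for well-posedness (Theorems \ref{solution} and \ref{SGMNSE-Sol}); Crauel--Flandoli applied to the attractor for existence of an invariant measure (Theorem \ref{thm6.3}); and the pathwise exponential contraction estimate with decay rate $\nu\lambda-\tfrac{7^7N^8}{2^{12}\nu^7}$, whose positivity is exactly the threshold $\nu>\tfrac{7N}{2}(112\lambda)^{-1/8}$, for uniqueness (Theorems \ref{UIM1} and \ref{UIM2}).

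The one place where you take a genuinely different route is the asymptotic compactness in part (ii), which you yourself identify as the main obstacle. You propose the uniform-tail-estimate method: multiply by a cut-off supported at large radius, show the tails of the $\H$-norm are uniformly small, and use the compact embedding on the remaining bounded subdomain. The paper instead uses Ball's energy-equation method in pure form (Theorem \ref{Main_theorem_1}): no cut-off functions and no local compact embeddings appear. It establishes weak continuity of the solution map (Lemmas \ref{weak_topo1} and \ref{weak_topo2}), constructs a negative trajectory $\{\mathcal{Y}_{-j}\}$ by a diagonal argument, and then uses the exact energy equality \eqref{Energy_esti2} together with weak lower semicontinuity of the equivalent norm $\langle\!\langle\cdot\rangle\!\rangle$ to upgrade weak convergence in $\H$ to strong convergence. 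The trade-off: your route requires verifying that the commutator terms and, more delicately, the tails of $\f\in\H^{-1}(\mathcal{O})$ and of the rough process $\mathfrak{Z}\in\mathrm{L}^2_{\mathrm{loc}}(\R;\H\cap\L^4(\mathcal{O}))$ are uniformly small over the infinite time horizon with the exponential weight --- a nontrivial extra verification that the energy-equation method sidesteps entirely; on the other hand, the paper's method must justify passing to the limit in the weighted integral of $\langle\B_N(\v^{m^{(j)}}+\mathfrak{Z}),\v^{m^{(j)}}\rangle$ (done in \eqref{weak_lim17} via the weak convergence \eqref{BN-w-con} obtained from the Minty--Browder identification), which is the corresponding delicate point of that approach. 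Both are legitimate strategies for unbounded Poincar\'e domains, but be aware that the parenthetical in your write-up conflates them: the ``weak-continuity plus energy-convergence'' method and the ``uniform-tail plus local compactness'' method are two distinct arguments, and the paper uses only the former.
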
 
	 
	 Apart from these results, a validation of the considered version of 3D GMNS equations has also been discussed in the appendix by establishing that the sequence of weak solutions of 3D GMNS equations converges to a weak solution of 3D NSE as $N\to\infty.$
	 
	 	\begin{remark}
	 	Note that the system \eqref{2} has also been considered in \cite{BCMV_Arxiv} to study the weak global  attractor for 3D Navier-Stokes equations via the globally modified Navier-Stokes equations \eqref{2}. We will study the weak random attractor for 3D stochastic Navier stokes equations perturbed by infinite-dimensional rough noise via the stochastic globally modified Navier-Stokes equations \eqref{SGMNSE} in  a future project.
	 \end{remark}

		\subsection{Organization of the article} 
		The rest of the paper is organized as follows: In the next section, we provide the necessary function spaces needed to obtain the existence and uniqueness of random attractors for the system \eqref{SGMNSE}. We also define the linear and nonlinear operators, and explain  their properties. Moreover, we provide an abstract formulation of the system \eqref{SGMNSE} in the same section. The metric dynamical system (MDS) and RDS corresponding to SGMNSE equations is constructed in Section \ref{sec5} using a random transformation (known as Doss-Sussman transformation, \cite{Doss_1977,Sussmann_1978}). The existence and uniqueness of a weak solution satisfying the energy equality to the transformed SGMNS equations (see the system \eqref{csgmnse} below) by using monotonicity arguments and a Minty-Browder technique is also established (Theorem \ref{solution}) in the same section. Section \ref{sec6} provides the main result of this article, that is, the existence of a random attractor for 3D SGMNS equations on  Poincar\'e domains. In order to do this, we first present Lemma \ref{RA1}, which provides  us the energy estimates for the solution of the system \eqref{csgmnse}. Based on Lemma \ref{RA1}, we introduce a new class of functions $\mathfrak{K}$, which are defined in Definition \ref{RA2}. Then, we provide the class $\mathfrak{DK}$ of closed and bounded random sets using functions in the class $\mathfrak{K}$. Next, we prove the uniform-tail estimates for the solution to  the system \eqref{csgmnse} in Lemma \ref{LR}.  We achieve our goal  by proving  Theorem \ref{Main_theorem_1}, which affirms that the RDS $\Phi$ generated by SGMNS equations on Poincar\'e domains is $\mathfrak{DK}$-asymptotically compact. Hence, in view of \cite[Theorem 2.8]{BCLLLR}, the existence of a random attractor of $\Phi$ is deduced. In the final section, we show the existence of an invariant measure for the system \eqref{SGMNSE} in Poincar\'e domains (Theorem \ref{thm6.3})  and for sufficiently large $\nu$, we prove that the invariant measure is unique (Theorem \ref{UIM2}). In Appendix \ref{ApA}, we establish that there existence a sequence of weak solutions of the 3D GMNS equations \eqref{2} which converges to a weak solution to the 3D NSE \eqref{3D-NSE} as $N\to\infty$.

	\section{Mathematical Formulation}\label{sec2}\setcounter{equation}{0}
	We start this section with the necessary function spaces needed to obtain the existence of random attractors for SGMNS equations \eqref{SGMNSE}. In order to obtain an abstract formulation for the system \eqref{SGMNSE}, we also define linear and nonlinear operators along with their properties in this section.
	\subsection{Function spaces}\label{FnO}
	 Let $\C_0^{\infty}(\mathcal{O};\R^3)$ denote the space of all infinite times differentiable functions  ($\R^3$-valued) with compact support in $\mathcal{O}\subset\R^3$. We define 
	\begin{align*} 
		\mathcal{V}&:=\{\u\in\C_0^{\infty}(\mathcal{O};\R^3):\nabla\cdot\u=0\}.
	\end{align*}
	 The spaces $\H$ and $\V$ are defined  as the closure of  $\mathcal{V}$  in the Lebesgue space  $\L^2(\mathcal{O})=\mathrm{L}^2(\mathcal{O};\R^3)$ and  Sobolev space $\H^1_0(\mathcal{O})=\mathrm{H}^1_0(\mathcal{O};\R^3)$, respectively. We characterize the space $\H$ with norm $\|\u\|_{\H}^2:=\int_{\mathcal{O}}|\u(x)|^2\d x$. As we are considering $\mathcal{O}$ as a  Poincar\'e domain (Assumption \ref{assumpO}), we characterize the space  $\V$ with the norm  $\|\u\|_{\V}^2:=\int_{\mathcal{O}}|\nabla\u(x)|^2\d x.$ Let $(\cdot,\cdot)$ and $(\!(\cdot,\cdot)\!)$ denote the inner product in the Hilbert spaces $\H$ and $\V$, respectively, and $\langle \cdot,\cdot\rangle $ denote the induced duality between the spaces $\V$  and its dual $\V'$.  Moreover, we have the continuous embedding $\V\hookrightarrow\H\cong\H^{'}\hookrightarrow\V'$.
	\subsection{Linear operator}\label{LO}
	Let $\mathcal{P}: \L^2(\mathcal{O}) \to\H$ denote the Helmholtz-Hodge orthogonal projection (cf.  \cite{OAL}). Let us define the Stokes operator 
	\begin{equation*}
		\A\u:=-\mathcal{P}\Delta\u,\;\u\in\D(\A).
	\end{equation*}
	The operator $\A:\V\to\V^{\prime}$ is a linear continuous operator which satisfies the following relation:
	\begin{equation*}
		\langle\A\u,\v\rangle=(\!(\u,\v)\!), \ \ \text{ for }\  \u,\v\in\V.
	\end{equation*}
In addition, the operator $\A$ is a non-negative self-adjoint operator in $\H$ and 
\begin{align}\label{2.7a}
	\langle\A\u,\u\rangle =\|\u\|_{\V}^2,\ \textrm{ for all }\ \u\in\V, \ \text{ so that }\ \|\A\u\|_{\V'}\leq \|\u\|_{\V}.
\end{align}

	\begin{remark}
		1.  Since the boundary of $\mathcal{O}$ is uniformly of class $\mathrm{C}^3$, we infer that $\D(\A)=\V\cap\H^2(\mathcal{O})$ and $\|\A\u\|_{\H}$ defines a norm in $\D(\A),$ which is equivalent to the one in $\H^2(\mathcal{O})$ (cf. \cite[Lemma 1]{Heywood}).  
		
		2. Since $\mathcal{O}$ is a Poincar\'e domain, then $\A$ is invertible and its inverse $\A^{-1}$ is bounded. 
	\end{remark}
	\subsection{Nonlinear operator}
	Next, we define the \emph{trilinear form} $b(\cdot,\cdot,\cdot):\V\times\V\times\V\to\R$ by $$b(\u,\v,\w)=\int_{\mathcal{O}}(\u(x)\cdot\nabla)\v(x)\cdot\w(x)\d x=\sum_{i,j=1}^3\int_{\mathcal{O}}\u_i(x)\frac{\partial \v_j(x)}{\partial x_i}\w_j(x)\d x.$$ If $\u, \v$ are such that the linear map $b(\u, \v, \cdot) $ is continuous on $\V$, the corresponding element is denoted by $\B(\u, \v)\in \V'$. We also represent $\B(\u) = \B(\u, \u)=\mathcal{P}[(\u\cdot\nabla)\u]$. Using an integration by parts, we obtain 
	\begin{equation}\label{b0}
		\left\{
		\begin{aligned}
			b(\u,\v,\v) &= 0,\ \text{ for all }\ \u,\v \in\V,\\
			b(\u,\v,\w) &=  -b(\u,\w,\v),\ \text{ for all }\ \u,\v,\w\in \V.
		\end{aligned}
		\right.\end{equation}
	\begin{remark}
	 We see that the operator $\B:\V\to\V^{\prime}$ is locally Lipschitz continuous. Consider
		\begin{align*}
			|\left<\B(\u)-\B(\v), \w\right>|&\leq |\left<\B(\u-\v,\u), \w\right>|+| \left<\B(\v, \u-\v), \w\right>|
			\nonumber\\ & \leq  |b(\u-\v, \w, \u)|+| b(\v, \w, \u-\v)|
			\nonumber\\ & \leq \|\u-\v\|_{\L^4(\mathcal{O})}\|\u\|_{\L^4(\mathcal{O})}\|\w\|_{\V} + \|\u-\v\|_{\L^4(\mathcal{O})}\|\v\|_{\L^4(\mathcal{O})}\|\w\|_{\V},
		\end{align*}
	which implies 
	\begin{align}\label{B-L}
		\|\B(\u)-\B(\boldsymbol{v})\|_{\V^{\prime}}&\leq L(r)\|\u-\boldsymbol{v}\|_{\V}, \; \text{ for all }\;\; \u,\boldsymbol{v}\in\V\; \text{ with  }\; \|\u\|_{\V}\leq r, \|\boldsymbol{v}\|_{\V}\leq r,
	\end{align}
		where $L(r)$ is a positive constant.
	\end{remark}
	
	For simplicity, we define
	\begin{align*}
		b_{N}(\u,\boldsymbol{v},\w)= F_{N}(\|\boldsymbol{v}\|_{\L^4(\mathcal{O})})\cdot b(\u,\boldsymbol{v},\w), \; \text{ for all }\ \u,\boldsymbol{v},\w\in \V.
	\end{align*}
	The form $b_{N}$ is linear in $\u$ and $\w$ but it is nonlinear in $\boldsymbol{v}$, however, we have the identity
	\begin{align*}
		b_N(\u,\boldsymbol{v},\boldsymbol{v}) = F_{N}(\|\boldsymbol{v}\|_{\L^4(\mathcal{O})})\cdot b(\u,\boldsymbol{v},\v) =0,\ \text{ for all }\ \u,\boldsymbol{v} \in\V.
	\end{align*}
	Next, define the operator $\B_{N}:\V\times\V\to\V^{\prime}$ by 
	\begin{align*}
		\left\langle \B_{N}(\u,\boldsymbol{v}), \w \right\rangle = b_{N}(\u,\boldsymbol{v},\w), \;\text{ for all } \u, \boldsymbol{v}, \w\in \V.
	\end{align*}
For simplicity of notation, we write $\B_{N}(\u) = \B_{N}(\u,\u)$.
	\subsection{Properties of the function $F_N(\cdot)$}
	Let us now discuss some properties of the function $F_{N}$ which will be useful in the sequel. 	
	\begin{lemma}\label{FN-Prop}
		For any $\u,\boldsymbol{v}\in\V$ and each $N>0$, 
		\begin{align}
			0 \leq \|\u\|_{\L^4(\mathcal{O})} F_{N}(\|\u\|_{\L^4(\mathcal{O})}) & \leq N, \label{FN1}\\
			|F_{N}(\|\u\|_{\L^4(\mathcal{O})})-F_{N}(\|\boldsymbol{v}\|_{\L^4(\mathcal{O})})|&\leq \frac{1}{N} F_{N}(\|\u\|_{\L^4(\mathcal{O})})F_{N}(\|\boldsymbol{v}\|_{\L^4(\mathcal{O})})\cdot \|\u-\boldsymbol{v}\|_{\L^4(\mathcal{O})}.\label{FN2} 
		\end{align}
	\end{lemma}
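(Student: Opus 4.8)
The plan is to reduce both assertions to elementary statements about the scalar function $r\mapsto F_N(r)=\min\{1,N/r\}$ on $(0,\infty)$, and then to substitute $r=\|\u\|_{\L^4(\mathcal{O})}$ and $s=\|\v\|_{\L^4(\mathcal{O})}$, invoking the triangle inequality $\big|\,\|\u\|_{\L^4(\mathcal{O})}-\|\v\|_{\L^4(\mathcal{O})}\,\big|\le\|\u-\v\|_{\L^4(\mathcal{O})}$ only at the very last step of \eqref{FN2}.

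For \eqref{FN1} I would split according to the two branches in the definition of the minimum. If $\|\u\|_{\L^4(\mathcal{O})}\le N$, then $F_N(\|\u\|_{\L^4(\mathcal{O})})=1$, so the product equals $\|\u\|_{\L^4(\mathcal{O})}\le N$; if $\|\u\|_{\L^4(\mathcal{O})}>N$, then $F_N(\|\u\|_{\L^4(\mathcal{O})})=N/\|\u\|_{\L^4(\mathcal{O})}$, so the product equals exactly $N$. Non-negativity is immediate since $F_N$ takes values in $(0,1]$.

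For \eqref{FN2} the cleanest route is to write $F_N=1/g$ where $g(r):=1/F_N(r)=\max\{1,r/N\}$. Being a maximum of the constant (hence $0$-Lipschitz) function $1$ and the $(1/N)$-Lipschitz function $r\mapsto r/N$, $g$ is itself $(1/N)$-Lipschitz, i.e.\ $|g(r)-g(s)|\le|r-s|/N$ for all $r,s>0$. Hence, for all $r,s>0$,
\begin{align*}
|F_N(r)-F_N(s)|=\left|\frac{1}{g(r)}-\frac{1}{g(s)}\right|=\frac{|g(s)-g(r)|}{g(r)g(s)}\le\frac{|r-s|}{N\,g(r)g(s)}=\frac{1}{N}\,F_N(r)F_N(s)\,|r-s|.
\end{align*}
Taking $r=\|\u\|_{\L^4(\mathcal{O})}$, $s=\|\v\|_{\L^4(\mathcal{O})}$ and bounding $|r-s|\le\|\u-\v\|_{\L^4(\mathcal{O})}$ gives \eqref{FN2}. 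If one prefers to avoid the reciprocal identity, \eqref{FN2} also follows from a direct inspection of the four cases depending on whether each of $\|\u\|_{\L^4(\mathcal{O})}$ and $\|\v\|_{\L^4(\mathcal{O})}$ is $\le N$ or $>N$; the only non-trivial case, say $\|\u\|_{\L^4(\mathcal{O})}\le N<\|\v\|_{\L^4(\mathcal{O})}$, reduces to $\tfrac{\|\v\|_{\L^4(\mathcal{O})}-N}{\|\v\|_{\L^4(\mathcal{O})}}\le\tfrac{\|\v\|_{\L^4(\mathcal{O})}-\|\u\|_{\L^4(\mathcal{O})}}{\|\v\|_{\L^4(\mathcal{O})}}$, which holds since $\|\u\|_{\L^4(\mathcal{O})}\le N$.

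There is no serious obstacle in this lemma; the only points requiring a little care are that the factor on the right of \eqref{FN2} is the full norm $\|\u-\v\|_{\L^4(\mathcal{O})}$ rather than $\big|\,\|\u\|_{\L^4(\mathcal{O})}-\|\v\|_{\L^4(\mathcal{O})}\,\big|$ (which is harmless, the former dominating the latter), and the observation that $1/F_N$ is globally $(1/N)$-Lipschitz, which collapses the entire estimate to one line.
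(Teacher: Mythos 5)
Your proof is correct. For \eqref{FN1} you do exactly what the paper does (split on the two branches of the minimum; the product is $\|\u\|_{\L^4(\mathcal{O})}\le N$ in one branch and identically $N$ in the other). For \eqref{FN2}, however, your main argument is genuinely different from the paper's: the paper proves the inequality by an explicit four-case analysis according to whether each of $\|\u\|_{\L^4(\mathcal{O})}$ and $\|\v\|_{\L^4(\mathcal{O})}$ is $\le N$ or $>N$ (with the fourth case obtained by symmetry), whereas you observe that $g:=1/F_N=\max\{1,\cdot/N\}$ is globally $(1/N)$-Lipschitz as a maximum of Lipschitz functions, and then the identity $|1/g(r)-1/g(s)|=|g(r)-g(s)|/(g(r)g(s))$ yields the stated bound in one line, with the reverse triangle inequality $\bigl|\|\u\|_{\L^4(\mathcal{O})}-\|\v\|_{\L^4(\mathcal{O})}\bigr|\le\|\u-\v\|_{\L^4(\mathcal{O})}$ applied at the end. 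Your route is shorter, avoids any case distinction, and makes transparent why the product $F_N(r)F_N(s)$ appears on the right-hand side; the paper's case analysis is more pedestrian but entirely self-contained and requires no observation about reciprocals. Your fallback sketch of the case analysis also matches the paper's Case III (the only genuinely mixed case), and your identification of the single point needing care — that the right-hand side carries the full norm $\|\u-\v\|_{\L^4(\mathcal{O})}$ rather than the difference of norms — is exactly where the triangle inequality enters in both arguments.
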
	
\begin{proof}
The inequality \eqref{FN1} is immediate from the definition of the function $F_N(\cdot)$. In order to prove the inequality \eqref{FN2}, we consider four different cases.
\vskip 2mm
\noindent
\textbf{Case I.} \textit{When $\|\u\|_{\L^4(\mathcal{O})}\leq N$ and $\|\v\|_{\L^4(\mathcal{O})}\leq N$.} It implies $F_N(\|\u\|_{\L^4(\mathcal{O})})=F_N(\|\v\|_{\L^4(\mathcal{O})})=1.$ Hence \eqref{FN2} follows. 
\vskip 2mm
\noindent
\textbf{Case II.} \textit{When $\|\u\|_{\L^4(\mathcal{O})}> N$ and $\|\v\|_{\L^4(\mathcal{O})}> N$.} In this case, $F_N(\|\u\|_{\L^4(\mathcal{O})})=\frac{N}{\|\u\|_{\L^4(\mathcal{O})}}$ and $F_N(\|\v\|_{\L^4(\mathcal{O})})=\frac{N}{\|\v\|_{\L^4(\mathcal{O})}}.$ Let us consider
\begin{align*}
	|F_{N}(\|\u\|_{\L^4(\mathcal{O})})-F_{N}(\|\boldsymbol{v}\|_{\L^4(\mathcal{O})})| & =\left|\frac{N}{\|\u\|_{\L^4(\mathcal{O})}}- \frac{N}{\|\v\|_{\L^4(\mathcal{O})}}\right| = \frac{N\left|\|\u\|_{\L^4(\mathcal{O})}-\|\v\|_{\L^4(\mathcal{O})}\right|}{\|\u\|_{\L^4(\mathcal{O})}\|\v\|_{\L^4(\mathcal{O})}}
	\nonumber\\
	& \leq \frac{1}{N}\frac{N}{\|\u\|_{\L^4(\mathcal{O})}}\frac{N}{\|\v\|_{\L^4(\mathcal{O})}} \|\u-\v\|_{\L^4(\mathcal{O})} \nonumber\\
	& = \frac{1}{N} F_{N}(\|\u\|_{\L^4(\mathcal{O})})F_{N}(\|\boldsymbol{v}\|_{\L^4(\mathcal{O})})\cdot \|\u-\boldsymbol{v}\|_{\L^4(\mathcal{O})},
\end{align*} 
 so that  \eqref{FN2} follows. 
 \vskip 2mm
 \noindent
 \textbf{Case III.} \textit{When $\|\u\|_{\L^4(\mathcal{O})}\leq N$ and $\|\v\|_{\L^4(\mathcal{O})}> N$.} Here, we have $F_N(\|\u\|_{\L^4(\mathcal{O})})=1$ and $F_N(\|\v\|_{\L^4(\mathcal{O})})=\frac{N}{\|\v\|_{\L^4(\mathcal{O})}}.$ Let us consider
 \begin{align*}
 	|F_{N}(\|\u\|_{\L^4(\mathcal{O})})-F_{N}(\|\boldsymbol{v}\|_{\L^4(\mathcal{O})})| & =\left|1- \frac{N}{\|\v\|_{\L^4(\mathcal{O})}}\right|  = \frac{\left|\|\v\|_{\L^4(\mathcal{O})}-N\right|}{\|\v\|_{\L^4(\mathcal{O})}}\nonumber\\
 	& \leq \frac{\left|\|\v\|_{\L^4(\mathcal{O})}-\|\u\|_{\L^4(\mathcal{O})}\right|}{\|\v\|_{\L^4(\mathcal{O})}}
 	\nonumber\\
 	& \leq \frac{1}{N}\cdot1\cdot\frac{N}{\|\v\|_{\L^4(\mathcal{O})}} \cdot \|\u-\v\|_{\L^4(\mathcal{O})} \nonumber\\
 	& = \frac{1}{N} F_{N}(\|\u\|_{\L^4(\mathcal{O})})F_{N}(\|\boldsymbol{v}\|_{\L^4(\mathcal{O})})\cdot \|\u-\boldsymbol{v}\|_{\L^4(\mathcal{O})},
 \end{align*} 
 so that  \eqref{FN2} follows. 
  \vskip 2mm
 \noindent
 \textbf{Case IV.} \textit{When $\|\u\|_{\L^4(\mathcal{O})}> N$ and $\|\v\|_{\L^4(\mathcal{O})}\leq N$.} By interchanging the role of $\u$ and $\v$ in \textbf{Case III}, we obtain \eqref{FN2} for \textbf{Case IV}, which  completes the proof.
\end{proof}
	 \begin{remark}
	 	In view of Lemma \ref{FN-Prop}, we obtain the locally Lipschitz continuity of operator $B_N$, that is, 
	 	\begin{align}\label{BN-L}
	 		\|\B_N(\u)-\B_{N}(\boldsymbol{v})\|_{\V^{\prime}}\leq \wi L(r)\|\u-\boldsymbol{v}\|_{\V}, 
	 	\end{align}
for all $\u,\boldsymbol{v}\in\V$  with  $\|\u\|_{\V}\leq r$ and $\|\boldsymbol{v}\|_{\V}\leq r$,	 where $\wi L(r)$ is a positive constant.
	 \end{remark}
	
	\begin{remark}\label{rem2.5}
		We have the following well-known inequality  due to Ladyzhenskaya (\cite[Chapter I]{OAL}) which will be used frequently in the  sequel:
		\begin{align}\label{lady}
			\|\v\|_{\L^{4}(\mathcal{O}) } \leq 
				2^{1/2} \|\v\|^{1/4}_{\L^{2}(\mathcal{O}) } \|\nabla \v\|^{3/4}_{\L^{2}(\mathcal{O}) }, \ \ \ \v\in \H^{1}_0 (\mathcal{O}).
		\end{align}
	\end{remark}

	\subsection{Stochastic globally modified Navier-Stokes equations}
	In this subsection, we provide an abstract formulation of the system \eqref{SGMNSE}.
	On taking projection $\mathcal{P}$ onto the first equation in \eqref{SGMNSE}, we obtain 
	\begin{equation}\label{S-GMNSE}
		\left\{
		\begin{aligned}
			\d\u(t)+\{\nu \A\u(t)+F_{N}(\|\u\|_{\L^4(\mathcal{O})}) \cdot \B(\u(t))\}\d t&=\mathcal{P}\f \d t + \d\mathrm{W}(t), \ \ \ t> 0, \\ 
			\u(0)&=\boldsymbol{x},
		\end{aligned}
		\right.
	\end{equation}
 where $\boldsymbol{x}\in \H,\ \f\in \H^{-1}(\mathcal{O})$ and $\{\mathrm{W}(t)\}_{t\in \R}$ is a two-sided cylindrical Wiener process in $\H$ with its RKHS $\mathrm{K}$ which satisfies Assumption \ref{assump1}.
 Now we present the definition of a solution to system \eqref{S-GMNSE} with the initial data $\x\in\H$ at the initial time $s\in \R.$
 \begin{definition}\label{Def_u}
 	Suppose that the Assumptions \ref{assump1} and \ref{assumpO} are satisfied. If $\x\in \H, s\in \R, \f\in \H^{-1}(\mathcal{O})$ and $\{\W(t)\}_{t\in \R}$ is a two-sided Wiener process with its RKHS $\mathrm{K}$. A process $\{\u(t), \ t\geq 0\},$ with trajectories in $\mathrm{C}([s, \infty); \H) \cap \mathrm{L}^{\frac{8}{3}}_{\mathrm{loc}}([s, \infty); \L^4(\mathcal{O}))$ is a solution to system \eqref{S-GMNSE} if and only if  $\u(s) = \x$ and for any $\psi\in \V, t>s,$ $\mathbb{P}$-a.s.,
 	\begin{align*}
 		(\u(t), \psi) = (\u(s), \psi) - \int_{s}^{t} \langle \nu \A\u(\tau)+\B_N(\u(\tau))  - \f , \psi \rangle \d \tau  +  \int_{s}^{t} ( \psi, \d\W(\tau)).
 	\end{align*}
 \end{definition}
 
  The well-posedness of the system \eqref{S-GMNSE} is discussed in Theorem \ref{SGMNSE-Sol} below. 

	\section{RDS generated by SGMNS equations}\label{sec5}\setcounter{equation}{0}
	 In this section, we prove the existence of RDS generated by 3D SGMNS equations using a random transformation (known as Doss-Sussman transformation, \cite{Doss_1977,Sussmann_1978}). The process of generation of RDS has been adopted form the works \cite{BCLLLR,BL}. 
	 \subsection{Metric dynamical system}Let us denote $\mathrm{X} = \H \cap  {\mathbb{L}}^{4}(\mathcal{O}) $. Let $\mathrm{E}$ denote the completion of $\A^{-\delta}(\mathrm{X})$ with respect to the graph norm $\|x\|_{\mathrm{E}}=\|\A^{-\delta} x \|_{\mathrm{X}}, \text{ for } x\in \mathrm{X}$,  where $ \|\cdot\|_{\mathrm{X}} = \|\cdot\|_{\H} +  \|\cdot\|_{{\L}^4(\mathcal{O})}$. Note that $\mathrm{E}$ is a separable Banach spaces (cf. \cite{Brze2}).
	
	For $\xi \in(0, 1/2)$, let us set 
	$$ \|\omega\|_{C^{\xi}_{1/2} (\mathbb{R};\mathrm{E})} = \sup_{t\neq s \in \mathbb{R}} \frac{\|\omega(t) - \omega(s)\|_{\mathrm{E}}}{|t-s|^{\xi}(1+|t|+|s|)^{1/2}}.$$
	Furthermore, we define
	\begin{align*}
		C^{\xi}_{1/2} (\mathbb{R}; \mathrm{E}) &= \left\{ \omega \in C(\mathbb{R}; \mathrm{E}) : \omega(0)=\boldsymbol{0},\ \|\omega\|_{C^{\xi}_{1/2} (\mathbb{R}; \mathrm{E})} < \infty \right\},\\ \Omega(\xi, \mathrm{E})&= \overline{\{ \omega \in C^\infty_0 (\mathbb{R}; \mathrm{E}) : \omega(0) = 0 \}}^{C^{\xi}_{1/2} (\mathbb{R}; \mathrm{E})}.
	\end{align*}
	The space $\Omega(\xi, \mathrm{E})$ is a separable Banach space. We also define
	$$C_{1/2} (\mathbb{R}; \mathrm{E}) = \left\{ \omega \in C(\mathbb{R}; \mathrm{E}) : \omega(0)=0, \|\omega\|_{C_{1/2} (\mathbb{R}; \mathrm{E})} = \sup_{t \in \mathbb{R}} \frac{\|\omega(t) \|_{\mathrm{E}}}{1+|t|^{\frac{1}{2}}} < \infty \right\}.$$

	Let us denote by $\mathcal{F}$, the Borel $\sigma$-algebra on $\Omega(\xi, \mathrm{E}).$ For $\xi\in (0, 1/2)$, there exists a Borel probability measure $\mathbb{P}$ on $\Omega(\xi, \mathrm{E})$ (cf. \cite{Brze}) such that the canonical process $\{w_t, \ t\in \mathbb{R}\}$ defined by 
	\begin{align}\label{Wp}
		w_t(\omega) := \omega(t), \ \ \ \omega \in \Omega(\xi, \mathrm{E}),
	\end{align}
	is an $\mathrm{E}$-valued two-sided Wiener process such that the RKHS of the Gaussian measure $\mathscr{L}(w_1)$ on $\mathrm{E}$ is $\mathrm{K}$. For $t\in \mathbb{R},$ let $\mathcal{F}_t := \sigma \{ w_s : s \leq t \}.$ Then  there exists a unique bounded linear map $\mathrm{W}(t): \mathrm{K} \to \mathrm{L}^2(\Omega(\xi, \mathrm{E}), \mathcal{F}_t  ,  \mathbb{P}).$ Moreover, the family $\{\mathrm{W}(t)\}_{t\in \mathbb{R}}$ is a $\mathrm{K}$-cylindrical Wiener process on a filtered probability space $(\Omega(\xi, \mathrm{E}), \mathcal{F}, \{\mathcal{F}_t\}_{t \in \mathbb{R}} , \mathbb{P})$ (cf. \cite{BP} for more details).
	
	We consider a flow $\vartheta = (\vartheta_t)_{t\in \mathbb{R}}$ on the space $C_{1/2} (\mathbb{R}; \mathrm{E}),$  defined by
	$$ \vartheta_t \omega(\cdot) = \omega(\cdot + t) - \omega(t), \ \ \ \omega\in C_{1/2} (\mathbb{R};\mathrm{E}), \ \ t\in \mathbb{R}.$$ 
	This flow keeps the spaces $C^{\xi}_{1/2} (\mathbb{R};\mathrm{E})$ and $\Omega(\xi, \mathrm{E})$ invariant and preserves $\mathbb{P}.$ 
	
	Summing up, we have the following result:
	\begin{proposition}[{\cite[Proposition 6.13]{BL}}]\label{m-DS1}
		The quadruple $(\Omega(\xi, \mathrm{E}), \mathcal{F}, \mathbb{P}, {\vartheta})$ is an MDS. 
	\end{proposition}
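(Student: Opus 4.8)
The plan is to verify the three defining properties of a metric dynamical system for the quadruple $(\Omega(\xi,\mathrm{E}),\mathcal{F},\mathbb{P},\vartheta)$: (i) $\vartheta$ is a flow, that is $\vartheta_0=\mathrm{id}$ and $\vartheta_{t+s}=\vartheta_t\circ\vartheta_s$ for all $s,t\in\mathbb{R}$; (ii) the map $\mathbb{R}\times\Omega(\xi,\mathrm{E})\ni(t,\omega)\mapsto\vartheta_t\omega\in\Omega(\xi,\mathrm{E})$ is $(\mathcal{B}(\mathbb{R})\otimes\mathcal{F},\mathcal{F})$-measurable; and (iii) each $\vartheta_t$ preserves $\mathbb{P}$, i.e. $(\vartheta_t)_*\mathbb{P}=\mathbb{P}$. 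The paragraph preceding the statement already records that $\vartheta$ leaves $C^\xi_{1/2}(\mathbb{R};\mathrm{E})$ and $\Omega(\xi,\mathrm{E})$ invariant and preserves $\mathbb{P}$, and the assertion is quoted as \cite[Proposition 6.13]{BL}; accordingly I would keep the argument short, spelling out the flow identity and the measurability and pointing to \cite{BCLLLR,BL} for the remaining technical details.

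First I would dispose of (i). Putting $t=0$ in $\vartheta_t\omega(\cdot)=\omega(\cdot+t)-\omega(t)$ gives $\vartheta_0\omega=\omega$, and for $r\in\mathbb{R}$ a one-line computation yields $\vartheta_t(\vartheta_s\omega)(r)=(\vartheta_s\omega)(r+t)-(\vartheta_s\omega)(t)=\bigl(\omega(r+t+s)-\omega(s)\bigr)-\bigl(\omega(t+s)-\omega(s)\bigr)=\omega(r+(t+s))-\omega(t+s)=\vartheta_{t+s}\omega(r)$, so $\vartheta$ is a flow and each $\vartheta_t$ is a bijection of $\Omega(\xi,\mathrm{E})$ with inverse $\vartheta_{-t}$. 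Along the way I would record the bound $\|\vartheta_t\omega\|_{C^\xi_{1/2}(\mathbb{R};\mathrm{E})}\le(1+2|t|)^{1/2}\|\omega\|_{C^\xi_{1/2}(\mathbb{R};\mathrm{E})}$, which follows from $(\vartheta_t\omega)(r)-(\vartheta_t\omega)(q)=\omega(r+t)-\omega(q+t)$ together with $1+|r+t|+|q+t|\le(1+2|t|)(1+|r|+|q|)$; this says that for each fixed $t$ the map $\omega\mapsto\vartheta_t\omega$ is a bounded, hence continuous, linear operator on $C^\xi_{1/2}(\mathbb{R};\mathrm{E})$ and therefore on $\Omega(\xi,\mathrm{E})$.

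Next I would treat (ii). Combining the above continuity in $\omega$ with the continuity of $t\mapsto\vartheta_t\omega$ for fixed $\omega$ — which holds because $\omega$ is uniformly continuous on compacts and grows slower than $|t|^{1/2}$, so $\|\vartheta_{t_n}\omega-\vartheta_t\omega\|_{C^\xi_{1/2}(\mathbb{R};\mathrm{E})}\to0$ whenever $t_n\to t$ — gives a map that is separately continuous on the separable metric space $\mathbb{R}\times\Omega(\xi,\mathrm{E})$, and such a map is jointly Borel measurable by a standard Carathéodory argument (approximate the first variable by a piecewise-constant grid and pass to the limit). For (iii) I would invoke the stationarity of the two-sided $\mathrm{E}$-valued Wiener process: $\mathbb{P}$ is by construction the law of $\{w_r\}_{r\in\mathbb{R}}$ with $w_0=\boldsymbol{0}$ and stationary, independent increments whose Gaussian covariance structure is translation invariant, so for fixed $t$ the process $r\mapsto w_{r+t}-w_t$ has the same finite-dimensional distributions — hence, being concentrated on $\Omega(\xi,\mathrm{E})$, the same law — as $\{w_r\}_{r\in\mathbb{R}}$, which is precisely $(\vartheta_t)_*\mathbb{P}=\mathbb{P}$.

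The one point that deserves genuine care — and the place where I would lean on the construction of $\mathbb{P}$ in \cite{Brze,Brze2,BP} and on \cite{BCLLLR,BL} — is that all of this must take place inside the non-standard path space $\Omega(\xi,\mathrm{E})$ rather than $C(\mathbb{R};\mathrm{E})$: one needs that the two-sided Wiener process actually has paths of the prescribed $\xi$-Hölder-with-$\tfrac12$-growth regularity for the chosen $\xi\in(0,1/2)$ (a Kolmogorov-type continuity-and-growth estimate) and that the shift $\vartheta_t$ preserves this class, which is exactly the invariance statement asserted above. Since these facts are either recorded just before the proposition or are standard properties of Wiener shifts, the proof reduces to assembling them, and I would conclude by referring to \cite[Proposition 6.13]{BL}.
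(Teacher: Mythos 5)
The paper offers no proof of this proposition at all: it simply records, in the paragraph before the statement, that the flow $\vartheta$ leaves $C^{\xi}_{1/2}(\mathbb{R};\mathrm{E})$ and $\Omega(\xi,\mathrm{E})$ invariant and preserves $\mathbb{P}$, and then cites \cite[Proposition 6.13]{BL}. Your sketch is therefore doing more than the paper does, and what you write is essentially the standard verification one finds in the cited reference: the flow identity, the operator bound $\|\vartheta_t\omega\|_{C^{\xi}_{1/2}}\le (1+2|t|)^{1/2}\|\omega\|_{C^{\xi}_{1/2}}$, joint measurability from separate continuity on a separable metric space, and measure preservation from the stationarity of Wiener increments are all correct.

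One point is stated with the wrong justification, though it does not break the argument. You claim $t\mapsto\vartheta_t\omega$ is continuous in the $C^{\xi}_{1/2}$-norm ``because $\omega$ is uniformly continuous on compacts and grows slower than $|t|^{1/2}$.'' For a general element of the (non-separable) weighted H\"older space $C^{\xi}_{1/2}(\mathbb{R};\mathrm{E})$, translation need not be strongly continuous — this is the familiar failure of strong continuity of shifts on H\"older spaces, and it is precisely why the paper works on $\Omega(\xi,\mathrm{E})$, defined as the closure of $C^{\infty}_0$ paths. The correct argument is: strong continuity of $t\mapsto\vartheta_t\omega$ is immediate for smooth compactly supported $\omega$, and since the operators $\vartheta_t$ are uniformly bounded for $t$ in compact sets (your own estimate), it extends to the closure $\Omega(\xi,\mathrm{E})$ by density. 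With that repair your outline is a complete and correct proof of the proposition.
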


\subsection{Ornstein-Uhlenbeck process}\label{O-Up}
Let us first recall some analytic preliminaries from \cite{BL} which will help us to define an Ornstein-Uhlenbeck process. All the results of this subsection are valid for the space $\mathrm{C}^{\xi}_{1/2} (\mathbb{R}; \mathbb{Y})$ replaced by $\Omega(\xi, \mathbb{Y}).$ 
\begin{proposition}[{\cite[Proposition 2.11]{BL}}]\label{Ap}
	Let  $-\mathbb{A}$ be the generator of an analytic semigroup $\{e^{t\mathbb{A}}\}_{t\geq 0}$ on a separable Banach space $\mathbb{Y}$ such that for some $C>0\ \text{and}\ \gamma>0$
	\begin{align}\label{ASG}
		\| \mathbb{A}^{1+\delta}e^{-t\mathbb{A}}\|_{\mathfrak{L}(\mathbb{Y})} \leq C_{\delta} t^{-1-\delta} e^{-\gamma t}, \ \ t> 0,
	\end{align}
	where $\mathfrak{L}(\mathbb{Y})$ denotes the space of all bounded linear operators from $\mathbb{Y}$ to $\mathbb{Y}$.	For $\xi \in (\delta, 1/2)$ and $\widetilde{\omega} \in  \mathrm{C}^{\xi}_{1/2} (\mathbb{R};\mathbb{Y}),$  define 
	\begin{align}
		\hat{\z}(t) = \hat{\z} (\mathbb{A}; \widetilde{\omega})(t) := \int_{-\infty}^{t} \mathbb{A}^{1+\delta} e^{-(t-r)\mathbb{A}} (\widetilde{\omega}(t) - \widetilde{\omega}(r))\d r, \ \ t\in \mathbb{R}.
	\end{align}
	If $t\in \mathbb{R},$ then $\hat{\z}(t)$ is a well-defined element of $\mathbb{Y}$ and the mapping 
	$$\mathrm{C}^{\xi}_{1/2} (\mathbb{R};\mathbb{Y}) \ni \widetilde{\omega}  \mapsto \hat{\z}(t) \in \mathbb{Y} $$
	is continuous. Moreover, the map $\hat{\z} :  \mathrm{C}^{\xi}_{1/2} (\mathbb{R}; \mathbb{Y}) \to  \mathrm{C}_{1/2} (\mathbb{R}; \mathbb{Y})$  is well defined, linear and bounded. In particular, there exists a constant $C >0$ such that for any $\widetilde{\omega} \in \mathrm{C}^{\xi}_{1/2} (\mathbb{R};\mathbb{Y})$ 
	\begin{align}\label{X_bound_of_z}
		\|\hat{\z}(\widetilde{\omega})(t)\|_{\mathbb{Y}} \leq C(1 + |t|^{1/2})\|\widetilde{\omega}\|_{\mathrm{C}^{\xi}_{1/2} (\mathbb{R}; \mathbb{Y})}, \ \ \ t \in \R.
	\end{align}
	Furthermore, under the same assumption, the following results hold (Corollaries 6.4, 6.6 and 6.8 in \cite{BL}):
	\begin{itemize}
		\item [1.]For all $-\infty<a<b<\infty$ and $t\in \R$, the map 
		\begin{align}\label{O-U_conti}
			\mathrm{C}^{\xi}_{1/2} (\mathbb{R};\mathbb{Y}) \ni \widetilde{\omega} \mapsto (\hat{\z}(\widetilde{\omega})(t), \hat{\z}(\widetilde{\omega})) \in \mathbb{Y} \times \mathrm{L}^{q} (a, b; \mathbb{Y}),
		\end{align}
		where $q\in [1, \infty]$, is continuous.
		\item [2.] For any $\omega \in \mathrm{C}^{\xi}_{1/2} (\mathbb{R};\mathbb{Y}),$
		\begin{align}\label{stationary}
			\hat{\z}(\vartheta_s \omega)(t) = \hat{\z}(\omega)(t+s), \ \ t, s \in \mathbb{R}.
		\end{align}
		\item [3.] For $\zeta \in \mathrm{C}_{1/2}(\mathbb{R};\mathbb{Y}),$ if we put $\uptau_s(\zeta(t))=\zeta(t+s), \ t,s \in \R,$ then, for $t \in \R ,\  \uptau_s \circ \hat{\z} = \hat{\z}\circ\vartheta_s$, that is, 
		\begin{align}\label{IS}
			\uptau_s\big(\hat{\z}(\omega)\big)= \hat{\z}\big(\vartheta_s(\omega)\big), \ \ \ \omega\in \mathrm{C}^{\xi}_{1/2} (\mathbb{R};\mathbb{Y}).
		\end{align}
	\end{itemize} 
\end{proposition}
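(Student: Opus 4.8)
The plan is to extract every claim in the proposition from a single quantitative bound on the convolution integral: once the pointwise--in--$t$ estimate \eqref{X_bound_of_z} is established, well-definedness, continuity, and the three corollary-type assertions follow with almost no extra work. \textbf{Step 1 (absolute convergence and \eqref{X_bound_of_z}).} Fix $t\in\R$ and $\widetilde{\omega}\in\mathrm{C}^{\xi}_{1/2}(\R;\mathbb{Y})$. For $r<t$ I would combine \eqref{ASG} (with exponent $t-r>0$) with the defining H\"older bound $\|\widetilde{\omega}(t)-\widetilde{\omega}(r)\|_{\mathbb{Y}}\le\|\widetilde{\omega}\|_{\mathrm{C}^{\xi}_{1/2}}|t-r|^{\xi}(1+|t|+|r|)^{1/2}$ to dominate the integrand in $\mathbb{Y}$-norm by $C_{\delta}\|\widetilde{\omega}\|_{\mathrm{C}^{\xi}_{1/2}}(t-r)^{\xi-1-\delta}e^{-\gamma(t-r)}(1+|t|+|r|)^{1/2}$. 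Substituting $u=t-r$ and using $|t-u|\le|t|+u$ together with subadditivity of the square root, so that $(1+|t|+|t-u|)^{1/2}\le C(1+|t|^{1/2})+u^{1/2}$, the $\mathbb{Y}$-norm of $\hat{\z}(\widetilde{\omega})(t)$ is at most $C_{\delta}\|\widetilde{\omega}\|_{\mathrm{C}^{\xi}_{1/2}}$ times $(1+|t|^{1/2})\int_{0}^{\infty}u^{\xi-1-\delta}e^{-\gamma u}\d u+\int_{0}^{\infty}u^{\xi-\frac{1}{2}-\delta}e^{-\gamma u}\d u$. Both $\Gamma$-type integrals are finite because the standing assumption $\xi>\delta$ gives $\xi-1-\delta>-1$, so the singularity at $u=0$ is integrable, while the exponential factor $e^{-\gamma u}$ handles the tails at $u=\infty$; moreover the integrand is strongly measurable in $r$ (the path $\widetilde{\omega}$ is continuous and $s\mapsto\mathbb{A}^{1+\delta}e^{-s\mathbb{A}}$ is norm-continuous for $s>0$). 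Hence the Bochner integral defining $\hat{\z}(t)$ exists in $\mathbb{Y}$ and the resulting bound is exactly \eqref{X_bound_of_z}.

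\textbf{Step 2 (linearity, continuity, mapping into $\mathrm{C}_{1/2}$).} Linearity of $\widetilde{\omega}\mapsto\hat{\z}(\widetilde{\omega})(t)$ is read off from the integral, and continuity of $\mathrm{C}^{\xi}_{1/2}(\R;\mathbb{Y})\ni\widetilde{\omega}\mapsto\hat{\z}(\widetilde{\omega})(t)\in\mathbb{Y}$ is then immediate from \eqref{X_bound_of_z}. To show $\hat{\z}\colon\mathrm{C}^{\xi}_{1/2}(\R;\mathbb{Y})\to\mathrm{C}_{1/2}(\R;\mathbb{Y})$ is well defined and bounded, I would first rewrite $\hat{\z}(\widetilde{\omega})(t)=\int_{0}^{\infty}\mathbb{A}^{1+\delta}e^{-u\mathbb{A}}(\widetilde{\omega}(t)-\widetilde{\omega}(t-u))\d u$ (change of variable $u=t-r$) and prove that $t\mapsto\hat{\z}(\widetilde{\omega})(t)$ is continuous by dominated convergence: on a bounded neighbourhood of a given $t_{0}$ the estimate of Step 1 supplies a $t$-uniform, $u$-integrable majorant, while the integrand converges pointwise in $u$ by continuity of $\widetilde{\omega}$ and boundedness of $\mathbb{A}^{1+\delta}e^{-u\mathbb{A}}$. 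Taking $\sup_{t}$ in \eqref{X_bound_of_z} then gives $\|\hat{\z}(\widetilde{\omega})\|_{\mathrm{C}_{1/2}(\R;\mathbb{Y})}\le C\|\widetilde{\omega}\|_{\mathrm{C}^{\xi}_{1/2}(\R;\mathbb{Y})}$, so $\hat{\z}$ is bounded and linear, hence continuous.

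\textbf{Step 3 (the three corollary-type statements).} For \eqref{O-U_conti}, the $\mathbb{Y}$-component is Step 2; for the $\mathrm{L}^{q}(a,b;\mathbb{Y})$-component, Step 1 yields $\sup_{t\in[a,b]}\|\hat{\z}(\widetilde{\omega})(t)\|_{\mathbb{Y}}\le C_{a,b}\|\widetilde{\omega}\|_{\mathrm{C}^{\xi}_{1/2}}$, and the continuity in $t$ from Step 2 gives measurability, so $\widetilde{\omega}\mapsto\hat{\z}(\widetilde{\omega})|_{(a,b)}$ is a bounded linear map into $\mathrm{L}^{q}(a,b;\mathbb{Y})$ and hence continuous; pairing it with the continuous $\mathbb{Y}$-component makes the product map continuous. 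For \eqref{stationary} I would compute directly: since $\vartheta_{s}\omega(t)-\vartheta_{s}\omega(r)=\omega(t+s)-\omega(r+s)$, the substitution $r'=r+s$ (under which $t-r=(t+s)-r'$ and $r\in(-\infty,t)$ corresponds to $r'\in(-\infty,t+s)$) transforms $\hat{\z}(\vartheta_{s}\omega)(t)$ into $\hat{\z}(\omega)(t+s)$. Finally \eqref{IS} is just a reformulation, since by the definition of $\uptau_{s}$ one has $\uptau_{s}(\hat{\z}(\omega))(t)=\hat{\z}(\omega)(t+s)=\hat{\z}(\vartheta_{s}\omega)(t)$.

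\textbf{Expected main obstacle.} There is no genuinely deep step; the only part requiring care is the exponent bookkeeping in Step 1, where one must simultaneously handle the integrable singularity of $\mathbb{A}^{1+\delta}e^{-(t-r)\mathbb{A}}$ as $r\uparrow t$ --- which is exactly what forces the standing restriction $\xi>\delta$ --- and the growth of the factor $(1+|t|+|r|)^{1/2}$ as $r\to-\infty$ (controlled by $e^{-\gamma(t-r)}$), and the split must be arranged so that the \emph{precise} growth rate $1+|t|^{1/2}$ comes out, since it is this exact rate, and not a faster one, that later lets $\hat{\z}(\widetilde{\omega})$ belong to $\mathrm{C}_{1/2}(\R;\mathbb{Y})$. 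The dominated-convergence argument for the continuity of $t\mapsto\hat{\z}(\widetilde{\omega})(t)$ likewise rests on the $t$-locally-uniform majorant produced by that same computation.
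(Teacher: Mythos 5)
Your argument is correct and complete in outline: the exponent bookkeeping in Step 1 (using $\xi>\delta$ to make $u^{\xi-1-\delta}$ integrable at $0$ and $e^{-\gamma u}$ to control the tail, with the split $(1+|t|+|t-u|)^{1/2}\lesssim(1+|t|^{1/2})+u^{1/2}$ producing exactly the $1+|t|^{1/2}$ growth), the dominated-convergence argument for continuity in $t$, and the change of variables $r'=r+s$ for the cocycle identity are precisely the standard route. Note, however, that the paper supplies no proof of this statement at all --- it is quoted verbatim from Brze\'zniak--Li \cite[Proposition 2.11, Corollaries 6.4, 6.6, 6.8]{BL} --- so your write-up is a reconstruction of the cited reference's argument rather than an alternative to anything in this paper; it matches that reference's approach and contains no gaps.
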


	Next, we define the Ornstein-Uhlenbeck process under Assumption \ref{assump1}. For $\delta$ as in Assumption \ref{assump1}, $\nu> 0, \ \chi \geq 0, \ \xi \in (\delta, 1/2)$ and $ \omega \in C^{\xi}_{1/2} (\mathbb{R};\mathrm{E})$ (so that $(\nu \A + \chi\I)^{-\delta}\omega \in C^{\xi}_{1/2} (\mathbb{R};\mathrm{X})$), we define $$ \mathfrak{Z}_{\chi}(\omega) := \hat{\z}((\nu \A + \chi\I); (\nu \A + \chi\I)^{-\delta}\omega) \ \in C_{1/2}(\mathbb{R};\mathrm{X}),$$  that is, for any $t\geq 0,$ 
	\begin{align}\label{DOu1}
		\mathfrak{Z}_{\chi}(\omega)(t)&=\int_{-\infty}^{t} (\nu \A + \chi\I)^{1+\delta} e^{-(t-\tau)(\nu \A + \chi\I)} ((\nu \A + \chi\I)^{-\delta}\vartheta_{\tau} \omega)(t-\tau)\d \tau.
	\end{align}
	For $\omega \in C^{\infty}_0 (\mathbb{R};\mathrm{E})$ with $\omega(0)= \boldsymbol{0},$ using  integration by parts, we obtain 
	\begin{align*}
		\frac{\d\mathfrak{Z}_\chi(t)}{\d t} &= -(\nu \A + \chi\I )\int_{-\infty}^{t} (\nu \A + \chi\I)^{1+\delta} e^{-(t-r)(\nu \A + \chi\I)} [(\nu \A + \chi\I)^{-\delta}\omega(t) \\&\qquad\qquad - (\nu \A + \chi\I)^{-\delta}\omega(r)]\d r +  \frac{\d\omega(t)}{\d t}.
	\end{align*}
	Thus $\mathfrak{Z}_{\chi}(\cdot)$ is the solution of the following equation:
	\begin{align}\label{OuE1}
		\frac{\d\mathfrak{Z}_{\chi} (t)}{\d t} + (\nu \A + \chi\I)\mathfrak{Z}_{\chi}(t) = \frac{\d\omega(t)}{\d t}, \ \ t\in \mathbb{R}.
	\end{align}
	Therefore, from the definition of the space $\Omega(\xi, \mathrm{E}),$ we have 
	\begin{corollary}\label{Diff_z1}
		If $\chi_1, \chi_2 \geq 0,$ then the difference $\mathfrak{Z}_{\chi_1} - \mathfrak{Z}_{\chi_2}$ is a solution to 
		\begin{align}\label{Dif_z1}
			\frac{\d(\mathfrak{Z}_{\chi_1} - \mathfrak{Z}_{\chi_2})(t)}{\d t} + \nu\A(\mathfrak{Z}_{\chi_1} - \mathfrak{Z}_{\chi_2})(t) = -(\chi_1 \mathfrak{Z}_{\chi_1} - \chi_2\mathfrak{Z}_{\chi_2})(t), \ \ \ t \in \R.
		\end{align}
	\end{corollary}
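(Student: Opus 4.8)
The plan is to prove \eqref{Dif_z1} first on the dense subclass of smooth paths, where it is an elementary manipulation of \eqref{OuE1}, and then to extend it to all of $\Omega(\xi,\mathrm{E})$ by a density argument --- which is precisely what is meant by ``from the definition of the space $\Omega(\xi,\mathrm{E})$''.

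For smooth $\omega\in C^{\infty}_0(\R;\mathrm{E})$ with $\omega(0)=\boldsymbol{0}$, equation \eqref{OuE1}, applied with $\chi=\chi_i$ for $i\in\{1,2\}$, holds classically in $\mathrm{X}$:
\begin{align*}
	\frac{\d\mathfrak{Z}_{\chi_i}(t)}{\d t}+(\nu\A+\chi_i\I)\mathfrak{Z}_{\chi_i}(t)=\frac{\d\omega(t)}{\d t},\qquad t\in\R .
\end{align*}
Subtracting the identity for $i=2$ from that for $i=1$, the common forcing $\d\omega/\d t$ cancels, and since
\begin{align*}
	(\nu\A+\chi_1\I)\mathfrak{Z}_{\chi_1}-(\nu\A+\chi_2\I)\mathfrak{Z}_{\chi_2}=\nu\A(\mathfrak{Z}_{\chi_1}-\mathfrak{Z}_{\chi_2})+(\chi_1\mathfrak{Z}_{\chi_1}-\chi_2\mathfrak{Z}_{\chi_2}),
\end{align*}
rearranging yields exactly \eqref{Dif_z1} for every such $\omega$.

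To reach a general $\omega\in\Omega(\xi,\mathrm{E})$, I would read \eqref{Dif_z1} in its mild form relative to the bounded analytic semigroup $\{e^{-t\nu\A}\}_{t\ge0}$ on $\mathrm{X}=\H\cap\L^4(\mathcal{O})$: for all $-\infty<s<t<\infty$,
\begin{align*}
	(\mathfrak{Z}_{\chi_1}-\mathfrak{Z}_{\chi_2})(t)=e^{-(t-s)\nu\A}(\mathfrak{Z}_{\chi_1}-\mathfrak{Z}_{\chi_2})(s)-\int_{s}^{t}e^{-(t-r)\nu\A}\big(\chi_1\mathfrak{Z}_{\chi_1}-\chi_2\mathfrak{Z}_{\chi_2}\big)(r)\,\d r .
\end{align*}
By the previous step this holds for every smooth $\omega$. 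Since $\Omega(\xi,\mathrm{E})$ is the $C^{\xi}_{1/2}(\R;\mathrm{E})$-closure of these paths and, by Proposition \ref{Ap} --- in particular the bound \eqref{X_bound_of_z} and the continuity statement \eqref{O-U_conti} --- each map $\omega\mapsto\mathfrak{Z}_{\chi_i}(\omega)$ is continuous from $C^{\xi}_{1/2}(\R;\mathrm{E})$ into $C_{1/2}(\R;\mathrm{X})$ and into $\mathrm{L}^{1}(s,t;\mathrm{X})$, one can approximate $\omega$ by smooth $\omega_n$ and pass to the limit: the boundary term converges in $\mathrm{X}$ because $e^{-(t-s)\nu\A}$ is a bounded operator on $\mathrm{X}$, and the convolution term converges because $r\mapsto e^{-(t-r)\nu\A}$ is uniformly bounded on $[s,t]$ while $\chi_1\mathfrak{Z}_{\chi_1}(\omega_n)-\chi_2\mathfrak{Z}_{\chi_2}(\omega_n)\to\chi_1\mathfrak{Z}_{\chi_1}(\omega)-\chi_2\mathfrak{Z}_{\chi_2}(\omega)$ in $\mathrm{L}^{1}(s,t;\mathrm{X})$. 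This gives the mild identity, hence \eqref{Dif_z1}, for every $\omega\in\Omega(\xi,\mathrm{E})$.

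The genuinely delicate point --- which I would flag as the main obstacle --- is that $\mathfrak{Z}_{\chi_i}(t)$ need not lie in $\D(\A)$, so $\A\mathfrak{Z}_{\chi_i}(t)$ carries no meaning in isolation; only the \emph{combination} $\nu\A(\mathfrak{Z}_{\chi_1}-\mathfrak{Z}_{\chi_2})$ is legitimate, the leading unbounded operator having cancelled and left the continuous $\mathrm{X}$-valued forcing $-(\chi_1\mathfrak{Z}_{\chi_1}-\chi_2\mathfrak{Z}_{\chi_2})$. Hence \eqref{Dif_z1} has to be understood in the integral sense above rather than pointwise, and what must be checked is that $\{e^{-t\nu\A}\}_{t\ge0}$ is a bounded analytic semigroup on the non-Hilbertian space $\mathrm{X}=\H\cap\L^4(\mathcal{O})$ (standard $\mathrm{L}^p$-theory of the Stokes operator) and that the continuity of the Ornstein--Uhlenbeck map furnished by Proposition \ref{Ap} indeed transfers to the $\mathrm{L}^{1}$-in-time topology on compact intervals.
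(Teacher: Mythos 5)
Your proof is correct and follows essentially the same route as the paper, which simply subtracts the two instances of \eqref{OuE1} (so the common forcing $\d\omega/\d t$ cancels) and appeals to the definition of $\Omega(\xi,\mathrm{E})$ as the closure of smooth paths. Your write-up merely makes explicit the density/mild-formulation argument that the paper leaves implicit in the phrase ``from the definition of the space $\Omega(\xi,\mathrm{E})$,'' and your observation that the identity must be read in integral form because $\mathfrak{Z}_{\chi_i}(t)$ need not lie in $\D(\A)$ is a sensible clarification rather than a departure.
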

	
	According to the  definition \eqref{Wp} of Wiener process $\{w_t, \ t\in \R\},$ one can view the formula \eqref{DOu1} as a definition of a process $\{\mathfrak{Z}_{\chi}(t), \ t\in \R\}$ on the probability space $(\Omega(\xi, \mathrm{E}), \mathcal{F}, \mathbb{P})$. Equation \eqref{OuE1} clearly tells that the process $\mathfrak{Z}_{\chi}(\cdot)$ is an Ornstein-Uhlenbeck process. Furthermore, the following results hold for $\mathfrak{Z}_{\chi}(\cdot)$.
	\begin{proposition}[{\cite[Proposition 6.10]{BL}}]\label{SOUP1}
		The process $\{\mathfrak{Z}_{\chi}(t), \ t\in \mathbb{R}\},$ is a stationary Ornstein-Uhlenbeck process on $(\Omega(\xi, \mathrm{E}), \mathcal{F}, \mathbb{P})$. It is a solution of the equation 
		\begin{align}\label{OUPe1}
			\d\mathfrak{Z}_{\chi}(t) + (\nu \A + \chi \I)\mathfrak{Z}_{\chi}(t) \d t = \d\mathrm{W}(t), \ \ t\in \mathbb{R},
		\end{align}
		that is, for all $t\in \mathbb{R},$ $\mathbb{P}$-a.s.,
		\begin{align}\label{oup1}
			\mathfrak{Z}_\chi (t) = \int_{-\infty}^{t} e^{-(t-\xi)(\nu \A + \chi\I)} \d\mathrm{W}(\xi),
		\end{align}
		where the integral is an It\^o integral on the M-type 2 Banach space $\mathrm{X}$  (cf. \cite{Brze1}). 	In particular, for some $C$ depending on $\mathrm{X}$,
		\begin{align}\label{E-OUP1}
			\mathbb{E}\left[\|\mathfrak{Z}_{\chi} (t)\|^2_{\mathrm{X}} \right]&= \mathbb{E}\left[\left\|\int_{-\infty}^{t} e^{-(t-\xi)(\nu \A + \chi\I)} \d\mathrm{W}(\xi)\right\|^2_{\mathrm{X}}\right] \leq C\int_{-\infty}^{t} \|e^{-(t-\xi)(\nu \A +  \chi\I)}\|^2_{\gamma(\mathrm{K},\mathrm{X})} \d \xi \nonumber\\&=C \int_{0}^{\infty}  e^{-2\chi \xi} \|e^{-\nu \xi \A}\|^2_{\gamma(\mathrm{K},\mathrm{X})} \d \xi.
		\end{align} 
		Moreover, $\mathbb{E}\left[\|\mathfrak{Z}_{\chi} (t)\|^2_{\mathrm{X}}\right]\to 0$ as $\chi \to \infty.$
	\end{proposition}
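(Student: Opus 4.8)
The plan is to reduce every assertion to the It\^o representation \eqref{oup1}; once that is established, stationarity, the Gaussian property, the evolution equation \eqref{OUPe1} and the bound \eqref{E-OUP1} all follow with little extra work. The starting data are the pathwise equation \eqref{OuE1}, already verified for $\omega\in C^\infty_0(\R;\mathrm{E})$, the continuity of $\widetilde{\omega}\mapsto(\hat{\z}(\widetilde{\omega})(t),\hat{\z}(\widetilde{\omega}))$ from Proposition \ref{Ap}, the invariance/shift relations \eqref{stationary}, and the density of smooth paths in $\Omega(\xi,\mathrm{E})$.

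First I would check that the It\^o integral in \eqref{oup1} makes sense. The space $\mathrm{X}=\H\cap\L^4(\mathcal{O})$ is of M-type $2$ (see \cite{Brze1}), so the Banach-space stochastic integration theory applies. Assumption \ref{assump1} gives that $\A^{-\delta}\colon\mathrm{K}\to\mathrm{X}$ is $\gamma$-radonifying, and since $\mathrm{D}((\nu\A+\chi\I)^{\delta})=\mathrm{D}(\A^{\delta})$ with equivalent norms, the same holds for $(\nu\A+\chi\I)^{-\delta}$. Factoring $e^{-\sigma(\nu\A+\chi\I)}=e^{-\chi\sigma}\big[\A^{\delta}e^{-\nu\sigma\A}\big]\A^{-\delta}$ and using analyticity together with the exponential stability of $e^{-\nu\sigma\A}$ on $\mathrm{X}$ (a consequence of Assumption \ref{assumpO}), I obtain
\begin{align*}
\|e^{-\sigma(\nu\A+\chi\I)}\|_{\gamma(\mathrm{K},\mathrm{X})}\leq C\,e^{-\chi\sigma}\sigma^{-\delta}e^{-\gamma\sigma},\qquad\sigma>0,
\end{align*}
which is square-integrable over $(0,\infty)$ because $\delta<1/2$; hence \eqref{oup1} defines a centred $\mathrm{X}$-valued Gaussian random variable. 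The same factorisation shows that Proposition \ref{Ap} applies with $\mathbb{A}=\nu\A+\chi\I$ and $\mathbb{Y}=\mathrm{X}$, so \eqref{DOu1} is also well posed.

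To identify $\mathfrak{Z}_{\chi}(t)$ with \eqref{oup1}, I would substitute $\omega(t)-\omega(r)=\int_r^t\d\W(\sigma)$ into \eqref{DOu1}, apply the stochastic Fubini theorem over the region $\{-\infty<r<\sigma<t\}$, and use the semigroup identity $\int_{-\infty}^{\sigma}(\nu\A+\chi\I)^{1+\delta}e^{-(t-r)(\nu\A+\chi\I)}\d r=(\nu\A+\chi\I)^{\delta}e^{-(t-\sigma)(\nu\A+\chi\I)}$; the double integral then collapses precisely to $\int_{-\infty}^t e^{-(t-\sigma)(\nu\A+\chi\I)}\d\W(\sigma)$. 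Rigorously one does this first for smooth $\omega$, where $\d\W$ reduces to $\omega'(\sigma)\d\sigma$ and Fubini is classical, and then passes to the limit, the left-hand side being continuous in $\omega$ by Proposition \ref{Ap} and the right-hand side $L^2$-continuous in $\W$. From \eqref{oup1} the Gaussianity is immediate; the semigroup property yields, for $t>s$, $\mathfrak{Z}_{\chi}(t)=e^{-(t-s)(\nu\A+\chi\I)}\mathfrak{Z}_{\chi}(s)+\int_s^t e^{-(t-\sigma)(\nu\A+\chi\I)}\d\W(\sigma)$, which is the mild form of \eqref{OUPe1} and is consistent with the pathwise \eqref{OuE1}. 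Stationarity then follows from \eqref{stationary}: since $\mathfrak{Z}_{\chi}(\vartheta_s\omega)(t)=\mathfrak{Z}_{\chi}(\omega)(t+s)$ and $\vartheta_s$ preserves $\mathbb{P}$, the finite-dimensional distributions of $\{\mathfrak{Z}_{\chi}(t+s)\}_{t\in\R}$ do not depend on $s$.

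Finally, applying the It\^o isometry on the M-type $2$ space $\mathrm{X}$ to \eqref{oup1} and substituting $\sigma\mapsto t-\sigma$ gives $\mathbb{E}\|\mathfrak{Z}_{\chi}(t)\|_{\mathrm{X}}^2\leq C\int_0^\infty e^{-2\chi\sigma}\|e^{-\nu\sigma\A}\|_{\gamma(\mathrm{K},\mathrm{X})}^2\d\sigma$, which is \eqref{E-OUP1}. Since $\|e^{-\nu\sigma\A}\|_{\gamma(\mathrm{K},\mathrm{X})}\leq C\sigma^{-\delta}$ for small $\sigma$ and decays exponentially for large $\sigma$, and $2\delta<1$, the integrand is bounded above, uniformly in $\chi\geq0$, by a fixed integrable function; letting $\chi\to\infty$ and using dominated convergence yields $\mathbb{E}\|\mathfrak{Z}_{\chi}(t)\|_{\mathrm{X}}^2\to0$. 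The main obstacle throughout is the non-Hilbertian setting: the $\mathrm{X}$-valued It\^o integral, the stochastic Fubini step, and the control of the $\gamma(\mathrm{K},\mathrm{X})$-norms under the shifts $\chi\I$ all require the Banach-space stochastic-integration machinery of \cite{Brze,Brze1,BL}, which is exactly why Assumption \ref{assump1} (the $\gamma$-radonifying hypothesis with $\delta<1/2$) is imposed; granted that, the three assertions are those of \cite[Proposition 6.10]{BL}.
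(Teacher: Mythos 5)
The paper does not prove this proposition at all: it is imported verbatim from \cite[Proposition 6.10]{BL}, so there is no in-paper argument to compare against. Your sketch is a correct reconstruction of the standard proof (and essentially the one in \cite{BL}): the factorization $e^{-\sigma(\nu\A+\chi\I)}=e^{-\chi\sigma}\bigl[\A^{\delta}e^{-\nu\sigma\A}\bigr]\A^{-\delta}$ together with the ideal property of $\gamma$-radonifying operators and $2\delta<1$ gives square-integrability of the kernel, the stochastic Fubini collapse of \eqref{DOu1} onto \eqref{oup1} is computed correctly, stationarity follows from \eqref{stationary} and the $\vartheta$-invariance of $\mathbb{P}$, and the M-type 2 It\^o inequality plus dominated convergence yields \eqref{E-OUP1} and the $\chi\to\infty$ limit. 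The only caveat is that your appeal to exponential decay of $\A^{\delta}e^{-\nu\sigma\A}$ in $\mathfrak{L}(\mathrm{X})$ with $\mathrm{X}=\H\cap\L^4(\mathcal{O})$ rests on the hypothesis \eqref{ASG} of Proposition \ref{Ap}, which the paper itself assumes rather than verifies for this $\mathrm{X}$, so you are at the same level of rigor as the source.
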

	
		\begin{remark}
			Note that \eqref{OUPe1} is the projected form of an  equation of the following type: 
			\begin{equation}\label{eqn_z_alpha}
				\left\{
				\begin{aligned}
					\d\mathfrak{Z}_\chi (t)  + \left[(-\nu\Delta +\chi \I) \mathfrak{Z}_\chi (t) + \nabla  {p}_1 \right]\d t &=\d \mathrm{W}(t) ,  \ \ t\in \mathbb{R},\\
					\mathrm{div}\;\mathfrak{Z}_\chi &=0,
				\end{aligned}
				\right.
			\end{equation}
			where $p_1$ is a scalar field associated with projected equation \eqref{OUPe1}.
	\end{remark}

	\subsection{Random dynamical system}
	Remember that Assumption \ref{assump1} is satisfied and that $\delta$ has the property stated there. Let us fix $\nu> 0$, and the parameters $\chi\geq 0$ and  $\xi \in (\delta, 1/2)$.
	
	Using a random transformation (known as Doss-Sussman transformation, \cite{Doss_1977,Sussmann_1978}), we get a random partial differential equation which equivalent to the system \eqref{S-GMNSE}. Let us define 
	\begin{align}\label{D-S_trans}
		\v^{\chi}(t):=\u(t) - \mathfrak{Z}_{\chi}(\omega)(t).
	\end{align}
	 For convenience, we write $\v^{\chi}(t)=\v(t)$ and $\mathfrak{Z}_{\chi}(\omega)(t)=\mathfrak{Z}(t)$. Then $\v(\cdot)$ satisfies the following system:
	 \begin{equation}\label{cgmnse-with-Pressure}
	 	\left\{ 
	 	\begin{aligned}
	 		\frac{\d \v}{\d t} & = -\underbrace{\nabla (p-p_1)}_{:=\nabla\widehat{p}} + \nu \Delta \v- F_{N}(\|\v+\mathfrak{Z}\|_{\L^4(\mathcal{O})}) \big((\v+\mathfrak{Z})\cdot \nabla\big)(\v+\mathfrak{Z})    
	 		  \\ & \quad  + \chi \mathfrak{Z} + \f  &&  \text{in }  \mathcal{O} \times (0,\infty),\\
	 		\text{div}\; \v&=0 \quad &&  \text{in } \mathcal{O} \times [0,\infty),\\
	 		\v &= \boldsymbol{0} &&  \text{on } \partial\mathcal{O}\times [0,\infty),\\
	 		\v(x,0)&=\x - \mathfrak{Z}_{\chi}(\omega)(0)=:\v_0  \quad &&  \text{in } \mathcal{O},
	 	\end{aligned}
	 	\right.
	 \end{equation}
	 where $p_1$ is the scalar field appearing in \eqref{eqn_z_alpha}, and the following projected system:
	\begin{equation}\label{csgmnse}
		\left\{
		\begin{aligned}
			\frac{\d\v}{\d t} &= -\nu \A\v -  \B_N(\v + \mathfrak{Z}) + \chi \mathfrak{Z} + \mathcal{P}\f, \\
			\v(0)&= \v_0 = \boldsymbol{x} - \mathfrak{Z}_{\chi}(0).
		\end{aligned}
		\right.
	\end{equation}
	Since $\mathfrak{Z}_{\chi}(\omega) \in C_{1/2} (\mathbb{R};\mathrm{X}), $ then $\mathfrak{Z}_{\chi}(\omega)(0)$ is a well defined element of $\H$. Let us now provide the definition of weak solution (in the deterministic sense, for each  fixed $\omega$) for \eqref{csgmnse}.
	\begin{definition}\label{defn5.9}
		Assume that $\v_0 \in \H$, $\f\in \H^{-1}(\mathcal{O})$ and $\mathfrak{Z}\in\mathrm{L}^2_{\mathrm{loc}}([0,\infty);\H\cap\L^4(\mathcal{O}))$. A function $\v(\cdot)$ is called a \emph{weak solution} of the system \eqref{csgmnse} on the time interval $[0, \infty)$, if $$\v\in  \mathrm{C}([0,\infty); \H) \cap \mathrm{L}^{2}_{\mathrm{loc}}(0,\infty; \V), \;\;\; \frac{\d\v}{\d t}\in\mathrm{L}^{2}_{\mathrm{loc}}(0,\infty;\V'),$$ and it satisfies 
		\begin{itemize}
			\item [(i)] for any $\psi\in \V,$ 
			\begin{align*}
				\left<\frac{\d\v(t)}{\d t}, \psi\right>&=  - \left\langle \nu \A\v(t)+\B_N(\v(t)+\mathfrak{Z}(t)) - \chi\mathfrak{Z}(t)- \f , \psi \right\rangle,
			\end{align*}
			for a.e. $t\in[0,\infty);$
			\item [(ii)] the initial data:
			$$\v(0)=\v_0 \ \text{ in }\ \H.$$
		\end{itemize}
	\end{definition}

	\begin{theorem}\label{solution}
		Let $\mathcal{O}$ satisfy  Assumption \ref{assumpO}, $\chi\geq0$, $\v_0 \in \H$, $\f\in \H^{-1}(\mathcal{O})$ and $\mathfrak{Z}\in\mathrm{L}^2_{\mathrm{loc}}([0,\infty);\H\cap\L^{4}(\mathcal{O}))$. Then there exists a unique weak solution $\v(\cdot)$ to the system \eqref{csgmnse} in the sense of Definition \ref{defn5.9}. Moreover,  the following energy equality is satisfied: 
		\begin{align}\label{eeq}
			&\|\v(t)\|_{\H}^2+2\nu\int_0^t\|\v(s)\|_{\V}^2\d s + 2\int_0^t\langle\B_N(\v(s)+\mathfrak{Z}(s)),\v(s)\rangle\d s  \nonumber\\&= \|\v_0\|_{\H}^2 +2\int_0^t\langle\f,\v(s)\rangle\d s+2\chi\int_0^t(\mathfrak{Z}(s),\v(s))\d s,
		\end{align}
		for all  $t\in[0,T]$	with $0<T<\infty$.
	\end{theorem}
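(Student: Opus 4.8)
The plan is to prove Theorem~\ref{solution} by a Faedo--Galerkin approximation scheme combined with a monotonicity (Minty--Browder) argument, since the presence of the globally modifying factor $F_N$ keeps all nonlinear terms under control but the lack of compactness on unbounded domains forbids the usual Aubin--Lions passage to the limit in $\B_N$. First I would set up the Galerkin system: using the orthonormal basis $\{\boldsymbol{e}_i\}$ of $\H$ consisting of eigenfunctions of the Stokes operator $\A$ (available since $\A^{-1}$ is compact on Poincar\'e domains), project \eqref{csgmnse} onto $\H_m=\mathrm{span}\{\boldsymbol{e}_1,\dots,\boldsymbol{e}_m\}$ to obtain a finite-dimensional ODE system $\frac{\d\v_m}{\d t}=-\nu\A\v_m-\mathrm{P}_m\B_N(\v_m+\mathfrak{Z})+\chi\mathrm{P}_m\mathfrak{Z}+\mathrm{P}_m\mathcal{P}\f$, $\v_m(0)=\mathrm{P}_m\v_0$. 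Local existence of $\v_m$ follows from the Carath\'eodory/Peano theorem since the right-hand side is continuous in $\v_m$ (note $F_N$ is continuous) and measurable in $t$ through $\mathfrak{Z}$.

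Next I would derive the \emph{a priori estimates}. Taking the inner product with $\v_m$ and using $\langle\B_N(\v_m+\mathfrak{Z}),\v_m\rangle=b_N(\v_m+\mathfrak{Z},\v_m+\mathfrak{Z},\v_m)=-b_N(\v_m+\mathfrak{Z},\mathfrak{Z},\v_m)$ together with \eqref{FN1}, Ladyzhenskaya's inequality \eqref{lady}, Poincar\'e's inequality \eqref{poin}, and Young's inequality, one bounds the nonlinear term by $\frac{\nu}{2}\|\v_m\|_{\V}^2$ plus lower-order terms involving $\|\mathfrak{Z}\|_{\L^4(\mathcal{O})}$ and $N$; this is exactly where the constant $\frac{7^7 N^8}{2^{13}\nu^7}$ arises. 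A Gr\"onwall argument then gives uniform bounds for $\v_m$ in $\mathrm{L}^\infty(0,T;\H)\cap\mathrm{L}^2(0,T;\V)$, and from the equation a uniform bound for $\frac{\d\v_m}{\d t}$ in $\mathrm{L}^2(0,T;\V')$ (here one checks $\|\B_N(\v_m+\mathfrak{Z})\|_{\V'}\le\|\v_m+\mathfrak{Z}\|_{\L^4(\mathcal{O})}\,\|\mathfrak{Z}\|_{\L^4(\mathcal{O})}F_N(\|\v_m+\mathfrak{Z}\|_{\L^4(\mathcal{O})})\le N\|\mathfrak{Z}\|_{\L^4(\mathcal{O})}$, using antisymmetry of $b$, which stays in $\mathrm{L}^2(0,T)$). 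Passing to a subsequence, $\v_m\rightharpoonup\v$ weakly in $\mathrm{L}^2(0,T;\V)$, weak-$*$ in $\mathrm{L}^\infty(0,T;\H)$, $\frac{\d\v_m}{\d t}\rightharpoonup\frac{\d\v}{\d t}$ in $\mathrm{L}^2(0,T;\V')$, and $\B_N(\v_m+\mathfrak{Z})\rightharpoonup\boldsymbol{\Xi}$ in $\mathrm{L}^2(0,T;\V')$ for some limit $\boldsymbol{\Xi}$; $\v$ satisfies the linear limit equation with $\B_N$ replaced by $\boldsymbol{\Xi}$, and $\v\in\mathrm{C}([0,T];\H)$ after identification.

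The \textbf{main obstacle} is identifying $\boldsymbol{\Xi}=\B_N(\v+\mathfrak{Z})$, which is where the Minty--Browder technique enters and uses the monotonicity inequality stated in the text: the operator $\mathcal{G}_N(\cdot)+\frac{7^7 N^8}{2^{13}\nu^7}\I=\nu\A\cdot+\B_N(\cdot+\mathfrak{Z})+\frac{7^7 N^8}{2^{13}\nu^7}\I$ is monotone on $\V$, i.e. $\langle\mathcal{G}_N(\v_1)-\mathcal{G}_N(\v_2),\v_1-\v_2\rangle+\frac{7^7 N^8}{2^{13}\nu^7}\|\v_1-\v_2\|_{\H}^2\ge0$. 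This bound I would establish by writing $\langle\B_N(\v_1+\mathfrak{Z})-\B_N(\v_2+\mathfrak{Z}),\v_1-\v_2\rangle$, splitting off the $F_N$-differences via \eqref{FN2} and the $b$-differences via antisymmetry, and absorbing everything into $\nu\|\v_1-\v_2\|_{\V}^2$ and $\frac{7^7 N^8}{2^{13}\nu^7}\|\v_1-\v_2\|_{\H}^2$ using \eqref{lady} and repeated Young inequalities (the exponent $8$ on $N$ and the numerology $7^7/2^{13}$ come from optimizing the Young constants applied to the $\|\cdot\|_{\L^4}^4\le 2\|\cdot\|_{\H}\|\cdot\|_{\V}^3$ term). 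Granting this, the standard Minty trick runs as follows: from the energy equality along the Galerkin level (which holds exactly since the system is finite-dimensional),
\begin{align*}
\|\v_m(t)\|_{\H}^2 e^{-2\beta t}+2\nu\int_0^t e^{-2\beta s}\|\v_m\|_{\V}^2\d s+2\int_0^t e^{-2\beta s}\langle\B_N(\v_m+\mathfrak{Z}),\v_m\rangle\d s+2\beta\int_0^t e^{-2\beta s}\|\v_m\|_{\H}^2\d s
\end{align*}
equals $\|\mathrm{P}_m\v_0\|_{\H}^2$ plus the forcing terms, with $\beta=\frac{7^7 N^8}{2^{13}\nu^7}$; using weak lower semicontinuity of the norm and the strong convergence of the forcing terms, one passes to $\liminf$, then for arbitrary $\boldsymbol{w}\in\mathrm{L}^2(0,T;\V)$ uses $0\le\int_0^t e^{-2\beta s}\big[\langle\mathcal{G}_N(\v_m+\mathfrak{Z})-\mathcal{G}_N(\boldsymbol{w}+\mathfrak{Z}),\v_m-\boldsymbol{w}\rangle+\beta\|\v_m-\boldsymbol{w}\|_{\H}^2\big]\d s$, lets $m\to\infty$, and finally sets $\boldsymbol{w}=\v-\lambda\boldsymbol{\phi}$, divides by $\lambda>0$, and sends $\lambda\to0$ (hemicontinuity of $\mathcal{G}_N$) to conclude $\boldsymbol{\Xi}=\B_N(\v+\mathfrak{Z})$. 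Uniqueness follows directly from the same monotonicity estimate applied to the difference of two solutions and Gr\"onwall; the energy equality \eqref{eeq} for the limit then follows because $\v\in\mathrm{L}^2(0,T;\V)$ with $\frac{\d\v}{\d t}\in\mathrm{L}^2(0,T;\V')$ (Lions--Magenes), so $t\mapsto\|\v(t)\|_{\H}^2$ is absolutely continuous and the chain rule applies, with the nonlinear term now genuinely $\langle\B_N(\v+\mathfrak{Z}),\v\rangle$.
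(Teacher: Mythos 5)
Your proposal follows essentially the same route as the paper: Galerkin approximation, uniform bounds in $\mathrm{L}^{\infty}(0,T;\H)\cap\mathrm{L}^{2}(0,T;\V)$ and on the time derivative in $\mathrm{L}^{2}(0,T;\V')$, the exponentially weighted energy identity with $\eta=\frac{7^7N^8}{2^{13}\nu^7}$, identification of the weak limit of $\B_N(\v_m+\mathfrak{Z})$ via the monotonicity of $\mathcal{G}_N+\eta\I$ together with hemicontinuity, and uniqueness plus the energy equality exactly as in the paper. Two points need correction, however. First, your justification for the choice of basis is wrong: on an unbounded Poincar\'e domain $\A^{-1}$ is bounded but \emph{not} compact (the embedding $\V\hookrightarrow\H$ fails to be compact, which is precisely why the Aubin--Lions route is unavailable and the Minty--Browder machinery is needed), so the Stokes operator need not possess a complete orthonormal system of eigenfunctions. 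The fix is harmless --- take any complete orthonormal system of $\H$ contained in $\V$, as the paper does --- but the claim as written contradicts the very obstruction your proof is designed to circumvent. Second, your bound $\|\B_N(\v_m+\mathfrak{Z})\|_{\V'}\le N\|\mathfrak{Z}\|_{\L^4(\mathcal{O})}$ is incorrect: the antisymmetry $b(\u,\u,\w)=-b(\u,\w,\u)$ applied to a general test function $\w$ yields $|\langle\B_N(\u),\w\rangle|\le F_N(\|\u\|_{\L^4(\mathcal{O})})\|\u\|_{\L^4(\mathcal{O})}^2\|\w\|_{\V}\le N\|\u\|_{\L^4(\mathcal{O})}\|\w\|_{\V}$ with $\u=\v_m+\mathfrak{Z}$; the extra factor of $\mathfrak{Z}$ only appears when testing against $\v_m$ itself (where $b(\cdot,\v_m,\v_m)=0$). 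The correct bound $N\|\v_m+\mathfrak{Z}\|_{\L^4(\mathcal{O})}\le CN\big(\|\v_m\|_{\V}+\|\mathfrak{Z}\|_{\L^4(\mathcal{O})}\big)$ still lies in $\mathrm{L}^2(0,T)$ by the a priori estimate, so the conclusion that $\frac{\d\v_m}{\d t}$ is bounded in $\mathrm{L}^2(0,T;\V')$ survives, but the estimate as you stated it would not.
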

%

	\begin{proof}
		Let us fix $T>0$. Note that it is enough to prove the result on the  interval $[0,T]$. 
		\vskip 2mm
		\noindent
		\textbf{Step I.} \emph{Existence of weak solutions.} Let $\{\boldsymbol{e}_1, \ldots , \boldsymbol{e}_n, \ldots\}$ be a complete orthonormal
		system in $\H$ belonging to $\V$ and let $\H_{n}:=\mathrm{span}\{\boldsymbol{e}_1,\ldots,\boldsymbol{e}_n\}$. Let $\mathrm{P}_n$ denote the orthogonal projection of $\V'$ to $\H_n$, that is, $\mathrm{P}_n\x=\sum_{i=1}^n\langle \x,\boldsymbol{e}_i\rangle \boldsymbol{e}_i$. Since every element $\x\in\H$ induces a functional $\x^*\in\H$  by the formula $\langle \x^*,\wi\x\rangle =(\x,\wi\x)$, $\wi\x\in\V$, then $\mathrm{P}_n\big|_{\H}$, the orthogonal projection of $\H$ onto $\H_n$  is given by $\mathrm{P}_n\x=\sum_{i=1}^n\left(\x,\boldsymbol{e}_i\right)\boldsymbol{e}_i$. Hence in particular, $\mathrm{P}_n$ is the orthogonal projection from $\H$ onto $\H_n$.

		 Let us consider the following system of ODEs on the finite dimensional space $\H_n$:
	\begin{equation}\label{finite-dimS}
	\left\{
	\begin{aligned}
		\frac{\d\v^n(t)}{\d t}&=-\nu \A_n\v^n(t)-\B_{n,N}(\v^n(t)+\mathfrak{Z}(t))+\chi\mathfrak{Z}_n(t)+\f_n,\\
		\v^n(0)&={\v_0}_n,
	\end{aligned}
	\right.
\end{equation}
		where  $\A_n\v^n=\P_n\A\v^n$, $\B_{n,N}(\v^n+\mathfrak{Z})=\mathrm{P}_n\B_N(\v^n+\mathfrak{Z})$, $\mathfrak{Z}_n=\P_n\mathfrak{Z}$ and $\f_n=\P_n[\mathcal{P}\f]$.
	Since $\B_{n,N}(\cdot)$ is  locally Lipschitz (see \eqref{BN-L}), the system \eqref{finite-dimS} has a unique local solution $\v^n\in\mathrm{C}([0,T^*];\H_n)$, for some $0<T^*<T$. The following a priori estimates show that the time $T^*$ can be extended to time $T$. Taking the inner product with $\v^n(\cdot)$ to the first equation of \eqref{finite-dimS}, we obtain for a.e. $t\in[0,T]$
		\begin{align}\label{S1}
			\frac{1}{2}\frac{\d}{\d t}\|\v^n(t)\|^2_{\H}
		  &=-\nu\|\v^n(t)\|^2_{\V} +F_{N}(\|\v^n(t)+\mathfrak{Z}(t)\|_{\L^4(\mathcal{O})})   \cdot b(\v^n(t)+\mathfrak{Z}(t),\v^n(t),\mathfrak{Z}(t))\nonumber\\&\quad+ \chi(\mathfrak{Z}(t),\v^n(t))+\langle\f,\v^n(t)\rangle.
		\end{align}
		Next, we estimate the  terms of the right hand side of \eqref{S1} as 
		\begin{align}\label{Sb}
		&	|F_{N}(\|\v^n+\mathfrak{Z}\|_{\L^4(\mathcal{O})})   \cdot b(\v^n+\mathfrak{Z}, \v^n, \mathfrak{Z})| \nonumber\\& 
		\leq  	F_{N}(\|\v^n+\mathfrak{Z}\|_{\L^4(\mathcal{O})})   \cdot 	\|\v^n+\mathfrak{Z}\|_{{\L}^{4}(\mathcal{O})} \|\v^n\|_{\V} \|\mathfrak{Z}\|_{{\L}^{4}(\mathcal{O})}, \nonumber \\ 
			& \leq  	N  \|\v^n\|_{\V} \|\mathfrak{Z}\|_{{\L}^{4}(\mathcal{O})} \leq \frac{\nu}{4} \|\v^n\|_{\V}^2 +    C N^2 \|\mathfrak{Z}\|_{{\L}^{4}(\mathcal{O})}^2 
		\end{align}
	and
			\begin{align}
			\big|\chi(\mathfrak{Z},\v^n)&+\langle\f,\v^n\rangle\big|\leq \frac{\nu}{4}\|\v^n\|^2_{\V} +C\|\f\|^2_{\H^{-1}(\mathcal{O})}+C\|\mathfrak{Z}\|^2_{\H},\label{S2}
		\end{align}
	where we have used H\"older's inequality, \eqref{FN1} and Young's inequality. Combining \eqref{S1}-\eqref{S2}, we deduce
		\begin{align}\label{S3}
			&\frac{\d}{\d t}\|\v^n(t)\|^2_{\H}+\nu\|\v^n(t)\|^2_{\V} 
			 \leq C\|\f\|^2_{\H^{-1}(\mathcal{O})} + C \|\mathfrak{Z}(t)\|^2_{\H}+ C N^2\|\mathfrak{Z}(t)\|^{2}_{\L^{4}(\mathcal{O})} ,
		\end{align}
		which gives for all $t\in[0,T]$
		\begin{align}\label{S4}
		&	\|\v^n(t)\|^2_{\H} + \nu \int_{0}^{t}\|\v^n(s)\|^2_{\V} \d s  
			\nonumber\\
&\leq 				\|\v^n(0)\|^2_{\H}   +C\int_{0}^{t}\big[ \|\f\|^2_{\H^{-1}(\mathcal{O})} + \|\mathfrak{Z}(s)\|^2_{\H}+ N^2\|\mathfrak{Z}(s)\|^{2}_{\L^{4}(\mathcal{O})}\big]\d s.
		\end{align}
		 Hence, using the fact that $\|\v^n(0)\|_{\H}\leq\|\v(0)\|_{\H}$, $\f\in\H^{-1}(\mathcal{O})$ and $\mathfrak{Z}\in \mathrm{L}^2(0,T; \H\cap {\L}^{4}(\mathcal{O}))$, we have from \eqref{S4} that
		\begin{align}\label{S5}
			\{\v^n\}_{n\in\N} \text{ is a bounded sequence in }\mathrm{L}^{\infty}(0,T;\H)\cap\mathrm{L}^{2}(0,T;\V).
		\end{align}
		For any arbitrary element $\boldsymbol{\psi}\in\mathrm{L}^2(0,T;\V)$, using H\"older's inequality and Sobolev's embedding, we have from \eqref{finite-dimS} that
		\begin{align*}
			&\left|\int_{0}^{T}\left\langle\frac{\d\v^n(t)}{\d t},\boldsymbol{\psi}(t)\right\rangle\d t\right|
			 \nonumber\\&\leq \int_{0}^{T}\bigg[\nu\left|(\nabla\v^n(t),\nabla\boldsymbol{\psi}(t))\right|+\left|F_{N}(\|\v^n(t)+\mathfrak{Z}(t)\|_{\L^4(\mathcal{O})})   \cdot b(\v^n(t)+\mathfrak{Z}(t),\boldsymbol{\psi}(t),\v^n(t)+\mathfrak{Z}(t))\right|
			\nonumber\\&\quad +\chi \left|(\mathfrak{Z}(t),\boldsymbol{\psi}(t))\right|+\left|\langle\f,\boldsymbol{\psi}(t)\rangle\right|\bigg]\d t 
			\nonumber\\&\leq C\int_{0}^{T}\bigg[\|\v^n(t)\|_{\V}\|\boldsymbol{\psi}(t)\|_{\V}
			+ F_{N}(\|\v^n(t)+\mathfrak{Z}(t)\|_{\L^4(\mathcal{O})})   \cdot\|\v^n(t)+\mathfrak{Z}(t)\|_{\L^{4}(\mathcal{O})}^{2}\|\boldsymbol{\psi}(t)\|_{\V} 
			\nonumber\\ & \qquad +\|\mathfrak{Z}(t)\|_{\H}\|\boldsymbol{\psi}(t)\|_{\H} +\|\f\|_{\H^{-1}(\mathcal{O})}\|\boldsymbol{\psi}(t)\|_{\V}\bigg]\d t
			\nonumber\\&\leq C\int_{0}^{T}\bigg[\|\v^n(t)\|_{\V}
			+ N\|\v^n(t)+\mathfrak{Z}(t)\|_{\L^{4}(\mathcal{O})} 
			 +\|\mathfrak{Z}(t)\|_{\H} +\|\f\|_{\H^{-1}(\mathcal{O})}\bigg]\|\boldsymbol{\psi}(t)\|_{\V}\d t
			 \nonumber\\&\leq C\int_{0}^{T}\bigg[ (1+N)\|\v^n(t)\|_{\V}
			 + N\|\mathfrak{Z}(t)\|_{\L^{4}(\mathcal{O})} 
			 +\|\mathfrak{Z}(t)\|_{\H} +\|\f\|_{\H^{-1}(\mathcal{O})}\bigg]\|\boldsymbol{\psi}(t)\|_{\V}\d t
		\nonumber\\&\leq C \bigg[(1+N)\|\v^n\|_{\mathrm{L}^{2}(0,T;\V)}
		+ N\|\mathfrak{Z}\|_{\mathrm{L}^{2}(0,T;\L^{4}(\mathcal{O}))} +\|\mathfrak{Z}\|_{\mathrm{L}^{2}(0,T;\H)}
		 + T^{\frac{1}{2}}\|\f\|_{\H^{-1}(\mathcal{O})}\bigg]\|\boldsymbol{\psi}\|_{\mathrm{L}^2(0,T;\V)},
		\end{align*}
		which implies $\frac{\d \v^n}{\d t}\in \mathrm{L}^{2}(0,T;\V')$ and $\mathrm{P}_n\mathcal{G}_{N}(\v^n)\in \mathrm{L}^{2}(0,T;\V')$, where $\mathcal{G}_{N}(\v^n):=\nu\A\v^n+\B_{N}(\v^n+\mathfrak{Z})$. Using \eqref{S5} and the \emph{Banach-Alaoglu theorem}, we infer the existence of an element $\v\in\mathrm{L}^{\infty}(0,T;\H)\cap\mathrm{L}^{2}(0,T;\V)$ with $\frac{\d \v}{\d t}\in \mathrm{L}^{2}(0,T;\V')$ and $\mathcal{G}_{0,N}\in\mathrm{L}^{2}(0,T;\V')$ such that
		\begin{align}
			\v^n\xrightharpoonup{w^*}&\ \v &&\text{ in }\ \ \ \ \	\mathrm{L}^{\infty}(0,T;\H),\label{S7}\\
			\v^n\xrightharpoonup{w}&\ \v   && \text{ in } \ \ \ \ \ \mathrm{L}^{2}(0,T;\V),\label{S8}\\
			\frac{\d \v^n}{\d t}\xrightharpoonup{w}&\frac{\d \v}{\d t}   && \text{ in }  \ \ \ \ \ \mathrm{L}^{2}(0,T;\V'),\label{S8d}\\
			\mathrm{P}_n\mathcal{G}_{N} (\v^n) \xrightharpoonup{w}& \mathcal{G}_{0,N}   && \text{ in }  \ \ \ \ \ \mathrm{L}^{2}(0,T;\V'),\label{S9d}
		\end{align}
		along a subsequence (still denoted by the same symbol). Note that $\chi\mathfrak{Z}_n(t)+\f_n\to \chi\mathfrak{Z}(t)+\mathcal{P}\f $ in $\mathrm{L}^2(0,T;\V^{\prime})$. Therefore, on passing to limit as $n\to\infty$ in \eqref{finite-dimS}, the limit $\v(\cdot)$ satisfies:
		\begin{equation}
			\left\{
			\begin{aligned}
				\frac{\d\v}{\d t}&=-\mathcal{G}_{0,N} +\chi\mathfrak{Z} +\mathcal{P}\f, && \text{  in }\  \mathrm{L}^{2}(0,T;\V'),\\
				\v(0)&=\v_0, && \text{  in }\  \H.
			\end{aligned}
			\right.
		\end{equation}
		Since $\v\in\mathrm{L}^{2}(0,T;\V)$ and $\frac{\d \v}{\d t}\in \mathrm{L}^{2}(0,T;\V')$, we have (cf. \cite[Ch. I\!I\!I, Lemma 1.2]{Temam}) $\v\in \C([0,T];\H)$, the real-valued function $t\mapsto\|\v(t)\|_{\H}^2$ is absolutely continuous and the following equality is satisfied:
		\begin{align}\label{EE1}
			 \frac{\d}{\d t}\|\v(t)\|_{\H}^2 = 2 \left<\frac{\d \v(t)}{\d t},\v(t)  \right>,\  \ \ \ \text{ for a.e. } t\in[0,T].
		\end{align}
	Hence, we have the following equality for any $\eta\geq0$:
	\begin{align}\label{EE2}
		e^{-2\eta t}\|\v(t)\|_{\H}^2 + 2 \int_{0}^{t}e^{-2\eta s}\left\langle  \mathcal{G}_{0,N}(s) - \chi\mathfrak{Z}(s) - \mathcal{P}\f + \eta \v(s), \v(s) \right\rangle \d s =\|\v(0)\|_{\H}^2,
	\end{align}
for all $t\in(0,T]$. In particular, we will choose $\eta=\frac{  7^{7}\cdot N^8}{2^{13}\cdot \nu^{7}}$ in the sequel. Similar to \eqref{EE2},  for  the system \eqref{finite-dimS}, we obtain the following energy equality:
\begin{align}\label{EE3}
	e^{-2\eta t}\|\v^n(t)\|_{\H}^2 + 2 \int_{0}^{t}e^{-2\eta s}\left\langle  \mathcal{G}_{N}(\v^n(s)) - \chi\mathfrak{Z}_n(s) - \f_n + \eta \v^n(s), \v^n(s) \right\rangle \d s =\|\v^n(0)\|_{\H}^2,
\end{align}
for all $t\in(0,T]$.

\vskip 2mm
\noindent
\textbf{Claim I:} \textit{The operator $\mathcal{G}_{N}:\V\to \V^{\prime}$ is demicontinuous.} Let us take a sequence $\u^n\to\u$ in $\V$, that is, $\|\u^n-\u\|_{\V}\to 0$ as $n\to\infty$. For any $\mathfrak{Z}\in\H\cap\L^4(\mathcal{O})$ and $\boldsymbol{\psi}\in\V$, we consider
\begin{align}\label{DM1}
	\langle\mathcal{G}_N(\u^n)-\mathcal{G}_N(\u),\boldsymbol{\psi}\rangle &=\nu \langle \A\u^n-\A\u,\boldsymbol{\psi}\rangle+\langle\B_N(\u^n+\mathfrak{Z})-\B_N(\u+\mathfrak{Z}),\boldsymbol{\psi}\rangle.
\end{align} 
Next, we take $\langle \A\u^n-\A\u,\boldsymbol{\psi}\rangle$ from \eqref{DM1} and estimate as follows:
\begin{align}
	|\langle \A\u^n-\A\u,\boldsymbol{\psi}\rangle|=|(\nabla(\u^n-\u),\nabla\boldsymbol{\psi})|\leq\|\u^n-\u\|_{\V}\|\boldsymbol{\psi}\|_{\V}\to 0, \ \text{ as } \ n\to\infty, 
\end{align}
since $\u^n\to \u$ in $\V$. We estimate the term $\langle\B_N(\u^n+\mathfrak{Z})-\B_N(\u+\mathfrak{Z}),\boldsymbol{\psi}\rangle$ from \eqref{DM1} using \eqref{b0}, H\"older's inequality, \eqref{FN1}, \eqref{FN2}, \eqref{lady} and \eqref{poin} as  
\begin{align}
	&|\langle\B_N(\u^n+\mathfrak{Z})-\B_N(\u+\mathfrak{Z}),\boldsymbol{\psi}\rangle|
	\nonumber\\ & = |F_N(\|\u^n+\mathfrak{Z}\|_{\L^4(\mathcal{O})})\cdot b(\u^n+\mathfrak{Z},\u^n+\mathfrak{Z},\boldsymbol{\psi})-F_N(\|\u+\mathfrak{Z}\|_{\L^4(\mathcal{O})})\cdot b(\u+\mathfrak{Z},\u+\mathfrak{Z},\boldsymbol{\psi})|
	\nonumber\\ & \leq  |\left[F_N(\|\u^n+\mathfrak{Z}\|_{\L^4(\mathcal{O})})-F_N(\|\u+\mathfrak{Z}\|_{\L^4(\mathcal{O})})\right]\cdot b(\u^n+\mathfrak{Z},\boldsymbol{\psi},\u^n+\mathfrak{Z})|
	\nonumber\\ & \quad + |F_N(\|\u+\mathfrak{Z}\|_{\L^4(\mathcal{O})})\cdot \left[b(\u^n-\u,\boldsymbol{\psi}, \u^n-\u) +b(\u^n-\u,\boldsymbol{\psi}, \u+\mathfrak{Z}) + b(\u+\mathfrak{Z},\boldsymbol{\psi}, \u^n-\u)\right]|
	\nonumber\\ & \leq  \frac{1}{N}F_N(\|\u^n+\mathfrak{Z}\|_{\L^4(\mathcal{O})})F_N(\|\u+\mathfrak{Z}\|_{\L^4(\mathcal{O})})\cdot \|\u^n-\u\|_{\L^4(\mathcal{O})} \|\u^n+\mathfrak{Z}\|_{\L^4(\mathcal{O})}^2\|\boldsymbol{\psi}\|_{\V}
	\nonumber\\ & \quad + F_N(\|\u+\mathfrak{Z}\|_{\L^4(\mathcal{O})})\cdot \bigg[\|\u^n-\u\|^2_{\L^4(\mathcal{O})}  +\|\u^n-\u\|_{\L^4(\mathcal{O})}  \|\u+\mathfrak{Z}\|_{\L^4(\mathcal{O})} 
	\nonumber\\ & \qquad + \|\u+\mathfrak{Z}\|_{\L^4(\mathcal{O})}  \|\u^n-\u\|_{\L^4(\mathcal{O})}\bigg]\|\boldsymbol{\psi}\|_{\V}
	\nonumber\\ & \leq  N \|\u^n-\u\|_{\V} \|\u^n+\mathfrak{Z}\|_{\L^4(\mathcal{O})}\|\boldsymbol{\psi}\|_{\V}
	 + \bigg[\|\u^n-\u\|^2_{\V}  + N \|\u^n-\u\|_{\V}    + N \|\u^n-\u\|_{\V} \bigg]\|\boldsymbol{\psi}\|_{\V}
	\nonumber\\& \to 0, \ \text{ as } \ n\to\infty, 
\end{align}
since $\u^n\to\u$ in $\V$ and $\u^n\in\V\subset \L^4(\mathcal{O})$. From the above convergences, it is immediate that $\langle\mathcal{G}_{N}(\u^n)-\mathcal{G}_{N}(\u),\boldsymbol{\psi}\rangle \to 0$, for all $\boldsymbol{\psi}\in \V$. Hence the operator $\mathcal{G}_{N}:\V \to \V'$ is demicontinuous, which implies that the operator $\mathcal{G}_{N}(\cdot)$ is hemicontinuous also. 

\vskip 2mm
\noindent
\textbf{Claim I\!I:} \textit{The operator $\mathcal{G}_{N}(\cdot):=\nu\A\cdot+\B_{N}(\cdot+\mathfrak{Z})$ satisfies
\begin{align}\label{CL1}
	\left< \mathcal{G}_{N}(\v_1)-\mathcal{G}_{N}(\v_2), \v_1-\v_2   \right> + \frac{ 7^{7}\cdot N^8}{2^{13}\cdot \nu^{7}} \|\v_1-\v_2\|_{\H}^2 \geq \frac{\nu}{2} \|\v_1-\v_2\|^2_{\V}\geq 0,
\end{align}
for any  $\v_1,\v_2\in\V.$}

		We know that 
		\begin{align}\label{CL2}
		\left<	\nu\A\v_2 - \nu\A\v_2 , \v_1 -\v_2 \right> = \nu\|\v_1-\v_2\|_{\V}^2.
		\end{align}
	Now, using \eqref{b0}, H\"older's inequality, \eqref{FN1}, \eqref{FN2}, \eqref{lady}  and Young's inequality, we estimate 
	\begin{align*}
	& |\left<	\B_{N}(\v_1+\mathfrak{Z}) - \B_{N}(\v_2+\mathfrak{Z}), \v_1 - \v_2 \right>| 
	\nonumber\\ & \leq   \left|\left[F_N(\|\v_1+\mathfrak{Z}\|_{\L^4(\mathcal{O})})- F_N(\|\v_2+\mathfrak{Z}\|_{\L^4(\mathcal{O})})\right]\cdot b(\v_1+\mathfrak{Z},\v_1-\v_2, \v_2+\mathfrak{Z} )\right| 
	\nonumber\\ & \quad + \left| F_N(\|\v_2+\mathfrak{Z}\|_{\L^4(\mathcal{O})}) \cdot  b(\v_1-\v_2, \v_1-\v_2 ,\v_2+\mathfrak{Z})  \right|
	\nonumber\\ & \leq   \frac{1}{N}F_N(\|\v_1+\mathfrak{Z}\|_{\L^4(\mathcal{O})})F_N(\|\v_2+\mathfrak{Z}\|_{\L^4(\mathcal{O})})\cdot\|\v_1-\v_2\|_{\L^4(\mathcal{O})} \|\v_1+\mathfrak{Z}\|_{\L^4(\mathcal{O})}     \|\v_1-\v_2\|_{\V}\nonumber\\ & \qquad \times  \|\v_2+\mathfrak{Z}\|_{\L^4(\mathcal{O})} 
	+ F_N(\|\v_2+\mathfrak{Z}\|_{\L^4(\mathcal{O})}) \cdot  \|\v_1-\v_2\|_{\L^4(\mathcal{O})} \|\v_1-\v_2\|_{\V}  \|\v_2+\mathfrak{Z}\|_{\L^4(\mathcal{O})}
	\nonumber\\ & \leq   2 N \|\v_1-\v_2\|_{\L^4(\mathcal{O})}     \|\v_1-\v_2\|_{\V} 
	\nonumber\\ & \leq   2^{\frac{3}{2}} N \|\v_1-\v_2\|^{\frac14}_{\H}   \|\v_1-\v_2\|^{\frac74}_{\V} 
	\nonumber\\ & \leq \frac{\nu}{2} \|\v_1-\v_2\|^2_{\V}  + \frac{7^{7}\cdot N^8}{2^{13}\cdot \nu^{7}} \|\v_1-\v_2\|^{2}_{\H},
	\end{align*}
		which implies  
		\begin{align}\label{CL3}
			& \left<	\B_{N}(\v_1+\mathfrak{Z}) - \B_{N}(\v_2+\mathfrak{Z}), \v_1 - \v_2 \right>
			 \geq  - \frac{\nu}{2} \|\v_1-\v_2\|^2_{\V}  - \frac{ 7^{7}\cdot N^8}{2^{13}\cdot \nu^{7}} \|\v_1-\v_2\|^{2}_{\H}.
		\end{align}
	In view of \eqref{CL2} and \eqref{CL3}, we obtain \eqref{CL1}.
	\vskip 2mm
	\noindent
	\textbf{Minty-Browder technique:} Remember that $\v^n(0)=\mathrm{P}_n\v(0)$, and hence the initial value $\v^n(0)$ converges strongly in $\H$, that is, we have
	\begin{align}\label{MB1}
		\lim_{n\to\infty} \|\v^n(0)-\v(0)\|_{\H}=0.
	\end{align} 
For any $\boldsymbol{\psi}\in \mathrm{L}^{\infty}(0,T;\H_{m})$ with $m<n$, using \eqref{CL1}, we obtain for $\eta=\frac{ 7^{7}\cdot N^8}{2^{13}\cdot \nu^{7}}$
\begin{align}\label{MB2}
	\int_{0}^{T} e^{-2\eta t}\{\left<\mathcal{G}_N(\boldsymbol{\psi}(t))-\mathcal{G}_N(\v_n(t)),\boldsymbol{\psi}(t)-\v_n(t)\right>  + \eta (\boldsymbol{\psi}(t)-\v_n(t), \boldsymbol{\psi}(t)-\v_n(t))\}\d t \geq0.
\end{align}
		Making use of \eqref{EE3} in \eqref{MB2}, we obtain
		\begin{align}\label{MB3}
		&	\int_{0}^{T} e^{-2\eta t} \left<\mathcal{G}_N(\boldsymbol{\psi}(t)) +\eta \boldsymbol{\psi} (t),\boldsymbol{\psi}(t)-\v_n(t)\right> \d t
		\nonumber\\ &  \geq \int_{0}^{T} e^{-2\eta t}\left<\mathcal{G}_N(\v_n(t))+\eta \v^n(t),\boldsymbol{\psi}(t)-\v_n(t)\right> \d t 
		\nonumber\\ &  = \int_{0}^{T} e^{-2\eta t}\left<\mathcal{G}_N(\v_n(t))+\eta \v^n(t),\boldsymbol{\psi}(t)\right> \d t  + \frac12\bigg[e^{-2\eta T}\|\v^n(T)\|_{\H}^2- \|\v^n(0)\|_{\H}^2\bigg]
		\nonumber\\ & \qquad -  \int_{0}^{T}e^{-2\eta t}\left\langle   \chi\mathfrak{Z}_n(t) + \f_n , \v^n(t) \right\rangle \d t. 
		\end{align}
	Taking limit infimum on both sides of \eqref{MB3}, we deduce
	\begin{align}\label{MB4}
		&	\int_{0}^{T} e^{-2\eta t} \left<\mathcal{G}_N(\boldsymbol{\psi}(t)) +\eta \boldsymbol{\psi} (t),\boldsymbol{\psi}(t)-\v(t)\right> \d t
		\nonumber\\ &  \geq  \int_{0}^{T} e^{-2\eta t}\left<\mathcal{G}_{0,N}(t)+\eta \v(t),\boldsymbol{\psi}(t)\right> \d t  + \frac12 \liminf_{n\to \infty}\bigg[e^{-2\eta T}\|\v^n(T)\|_{\H}^2- \|\v^n(0)\|_{\H}^2\bigg]
		\nonumber\\ & \qquad -  \int_{0}^{T}e^{-2\eta t}\left\langle   \chi\mathfrak{Z}(t) + \mathcal{P} \f , \v(t) \right\rangle \d t
		\nonumber\\ &  \geq  \int_{0}^{T} e^{-2\eta t}\left<\mathcal{G}_{0,N}(t)+\eta \v(t),\boldsymbol{\psi}(t)\right> \d t  + \frac12 \bigg[e^{-2\eta T}\|\v(T)\|_{\H}^2- \|\v(0)\|_{\H}^2\bigg]
		\nonumber\\ & \qquad -  \int_{0}^{T}e^{-2\eta t}\left\langle   \chi\mathfrak{Z}(t) + \mathcal{P}\f , \v(t) \right\rangle \d t,
	\end{align}
	where we have used the  lower semicontinuity property of the $\H$-norm and the strong convergence of the initial data \eqref{MB1} in the final inequality. Now, using the equality \eqref{EE2} in \eqref{MB4}, we further have 
	\begin{align}\label{MB5}
		&	\int_{0}^{T} e^{-2\eta t} \left<\mathcal{G}_N(\boldsymbol{\psi}(t)) +\eta \boldsymbol{\psi} (t),\boldsymbol{\psi}(t)-\v(t)\right> \d t
		\nonumber\\ &  \geq   \int_{0}^{T} e^{-2\eta t}\left<\mathcal{G}_{0,N}(t)+\eta \v(t),\boldsymbol{\psi}(t)\right> \d t  - \int_{0}^{T} e^{-2\eta t}\left<\mathcal{G}_{0,N}(t)+\eta \v(t),\v(t)\right> \d t
		\nonumber\\ &  \geq   \int_{0}^{T} e^{-2\eta t}\left<\mathcal{G}_{0,N}(t)+\eta \v(t),\boldsymbol{\psi}(t) - \v(t)\right> \d t.  
	\end{align}
	Note that the estimate \eqref{MB5} holds true for any
	$\boldsymbol{\psi}\in\mathrm{L}^{\infty}(0,T;\H_m)$, $m\in\mathbb{N}$, since the  inequality given in \eqref{MB5} is
	independent of both $m$ and $n$. Using a density
	argument, one can show that the inequality \eqref{MB5} remains true for any
	$\boldsymbol{\psi}\in\mathrm{L}^{\infty}(0,T;\H)\cap\mathrm{L}^2(0,T;\V).$ In fact, for any
	$\boldsymbol{\psi}\in\mathrm{L}^{\infty}(0,T;\H)\cap\mathrm{L}^2(0,T;\V),$ there	exists a strongly convergent subsequence	$\boldsymbol{\psi}_m\in\mathrm{L}^{\infty}(0,T;\H)\cap\mathrm{L}^2(0,T;\V),$ that
	satisfies the inequality \eqref{MB5}.
	
	Taking $\boldsymbol{\psi}=\v+ r \w$, $r>0$, where $\w  \in\mathrm{L}^{\infty}(0,T;\H)\cap\mathrm{L}^2(0,T;\V),$ and substituting for $\boldsymbol{\psi}$ in \eqref{MB5}, we get
	\begin{align}\label{MB6}
		&	\int_{0}^{T} e^{-2\eta t} \left<\mathcal{G}_N(\v(t)+ r \w(t)) - \mathcal{G}_{0,N}(t) +\eta r \w(t), r \w(t)\right> \d t
		   \geq   0.
	\end{align}
Dividing the inequality \eqref{MB6} by $r$, using the
hemicontinuity property of the operator $\mathcal{G}_N(\cdot)$ (see \textbf{Claim I}), and then passing $r\to 0$, we find 
	\begin{align}\label{MB7}
		&	\int_{0}^{T} e^{-2\eta t} \left<\mathcal{G}_N(\v(t)) - \mathcal{G}_{0,N}(t),  \w(t)\right> \d t
		\geq   0,
	\end{align}
for any $\w  \in \mathrm{L}^{\infty}(0,T;\H)\cap\mathrm{L}^2(0,T;\V).$ Therefore, from \eqref{MB7}, we deduce that $\mathcal{G}_{N}(\v(\cdot))=\mathcal{G}_{0,N}(\cdot).$ Hence, we obtain the following weak convergence from \eqref{S9d}:
\begin{align}\label{GN-w-con}
	\mathcal{G}_{N} (\v^n) \xrightharpoonup{w} \mathcal{G}_{N} (\v) \ \ \  \text{ in }  \ \ \ \mathrm{L}^{2}(0,T;\V').
\end{align}
Moreover, in view of \eqref{GN-w-con} and the weak convergence $\A\v^n\xrightharpoonup{w}\A \v$ in  $\mathrm{L}^{2}(0,T;\V')$, we also derive the following weak convergence:
\begin{align}\label{BN-w-con}
	\B_{N} (\v^n+\mathfrak{Z}) \xrightharpoonup{w} \B_{N} (\v+\mathfrak{Z}) \ \ \  \text{ in }  \ \ \ \mathrm{L}^{2}(0,T;\V').
\end{align}
Furthermore, $\v(\cdot)$ satisfies the following energy equality for all $t\in[0,T]$ (see \eqref{EE2}):
	\begin{align}\label{EE4}
		&\|\v(t)\|_{\H}^2+2\nu\int_0^t\|\v(s)\|_{\V}^2\d s + 2\int_0^t\langle\B_N(\v(s)+\mathfrak{Z}(s)),\v(s)\rangle\d s  \nonumber\\&= \|\v_0\|_{\H}^2 +2\int_0^t\langle\f,\v(s)\rangle\d s+2\chi\int_0^t(\mathfrak{Z}(s),\v(s))\d s.
	\end{align}
		\vskip 2mm
		\noindent
		\textbf{Step II.}	\emph{Uniqueness:} Define $\mathfrak{F}=\v_1-\v_2$, where $\v_1$ and $\v_2$ are two weak solutions of the system \eqref{csgmnse} in the sense of Definition \ref{defn5.9}. Then $\mathfrak{F}\in\mathrm{C}([0,T];\H)\cap\mathrm{L}^{2}(0,T;\V)$ and satisfies
		\begin{equation}\label{Uni}
			\left\{
			\begin{aligned}
				\frac{\d\mathfrak{F}(t)}{\d t} &= -\nu \A\mathfrak{F} (t)  - \B_N(\v_1(t)+\mathfrak{Z}(t))+\B_N(\v_2(t)+\mathfrak{Z}(t))\\ & = -\left[\mathcal{G}_{N}(\v_1(t))-\mathcal{G}_{N}(\v_2(t))\right], \\
				\mathfrak{F}(0)&= \textbf{0},
			\end{aligned}
			\right.
		\end{equation}
		in the weak sense.	From the above equation, using  the energy equality, we obtain
		\begin{align}\label{U1}
			&	\frac{1}{2}\frac{\d}{\d t}\|\mathfrak{F}(t)\|^2_{\H} = - \left<\mathcal{G}_{N}(\v_1(t))-\mathcal{G}_{N}(\v_2(t)), \v_1(t)-\v_2(t)\right> \leq \frac{ 7^{7}\cdot N^8}{2^{13}\cdot \nu^{7}} \|\mathfrak{F}(t)\|_{\H}^2,
		\end{align}
where we have used \eqref{CL1}. Applying the variation of constants formula and using the fact that $\mathfrak{F}(0)=\textbf{0}$, we obtain $\v_1(t)=\v_2(t)$, for all $t\in[0,T]$ in $\H$, which proves the uniqueness.
	\end{proof}	
\begin{remark}\label{rem3.7}
	It is remarkable to note that the proof of existence of solutions in the case of bounded domains is bit easier.  We know that the embedding $\V\hookrightarrow\H$ is compact for a bounded domain $\mathcal{O}$. In view of convergences \eqref{S8}-\eqref{S8d} and the Aubin–Lions compactness lemma (cf. \cite[Theorem 5, Corollary 4]{Simon_1987}), one can obtain (along a subsequence) 
	\begin{align}\label{C-H}
		\v^n\to \v \ \  \text{ in } \ \ \ \ \ \mathrm{L}^{2}(0,T;\H),
	\end{align}
which helps us to pass limit in the nonlinear term, since the above convergence implies
\begin{align}\label{351}
	\v^n\to \v \ \  \text{ in } \ \ \ \ \  \text{ for a.e. }\ (t,x)\in(0,T)\times\mathcal{O},
\end{align}
along a further subsequence. The continuity of $F_N(\cdot)$ and the convergence \eqref{351}  imply
\begin{align}
	F_{N}(\|\v^n(t)\|_{\L^4(\O)})\to F_{N}(\|\v(t)\|_{\L^4(\O)}) \ \ \ \ \ \text{ for a.e. } \ t\in(0,T). 
\end{align}

\end{remark}

	Next, we show that the weak solution of system \eqref{csgmnse} is continuous with respect to given data (particularly $\x$, $\f$ and $\mathfrak{Z}$).
	
	\begin{theorem}\label{RDS_Conti1}
		For some $T >0$ fixed, assume that $\boldsymbol{x}_n \to \boldsymbol{x}$ in $\H$, $\f_n \to \f \ \text{ in }\ \H^{-1}(\mathcal{O})$ and $\mathfrak{Z}_n \to \mathfrak{Z}\ \text{ in }\ \mathrm{L}^2 (0, T; \H\cap\L^{4}(\mathcal{O}))$.  Let us denote by $\v(t, \mathfrak{Z})\boldsymbol{x},$ the solution of the system \eqref{csgmnse} and by $\v(t, \mathfrak{Z}_n)\boldsymbol{x}_n,$  the solution of the system \eqref{csgmnse} with $\mathfrak{Z}, \f, \boldsymbol{x}$ being replaced by $\mathfrak{Z}_n, \f_n, \boldsymbol{x}_n$. Then \begin{align}\label{5.20}
			\v(\cdot, \mathfrak{Z}_n)\boldsymbol{x}_n \to \v(\cdot, \mathfrak{Z})\boldsymbol{x} \ \text{ in } \ \mathrm{C}([0,T];\H)\cap\mathrm{L}^2 (0, T;\V).
		\end{align}
		In particular, $\v(T, \mathfrak{Z}_n)\boldsymbol{x}_n \to \v(T, \mathfrak{Z})\boldsymbol{x}$ in $\H$.
	\end{theorem}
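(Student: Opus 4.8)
The plan is to run, for the difference of the two solutions, the same energy estimate as in the uniqueness part of Theorem \ref{solution}, but now keeping track of the inhomogeneities produced by $\boldsymbol{x}_n-\boldsymbol{x}$, $\f_n-\f$ and $\mathfrak{Z}_n-\mathfrak{Z}$, and then to close a Gr\"onwall inequality. Write $\v_n(\cdot)=\v(\cdot,\mathfrak{Z}_n)\boldsymbol{x}_n$, $\v(\cdot)=\v(\cdot,\mathfrak{Z})\boldsymbol{x}$ and $\boldsymbol{w}_n=\v_n-\v$. First I would record uniform‑in‑$n$ bounds: since the convergent sequences $\{\boldsymbol{x}_n\}$, $\{\f_n\}$, $\{\mathfrak{Z}_n\}$ are bounded in $\H$, $\H^{-1}(\mathcal{O})$ and $\mathrm{L}^2(0,T;\H\cap\L^4(\mathcal{O}))$ respectively, the a priori estimates \eqref{S4}--\eqref{S5} applied to $\v_n$ bound $\{\v_n\}$ in $\mathrm{L}^\infty(0,T;\H)\cap\mathrm{L}^2(0,T;\V)$ independently of $n$; in particular $\{\v_n\}$ and $\{\boldsymbol{w}_n\}$ are bounded in $\mathrm{L}^2(0,T;\V)\subset\mathrm{L}^2(0,T;\L^4(\mathcal{O}))$.

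Subtracting the two copies of \eqref{csgmnse}, the function $\boldsymbol{w}_n$ solves, in the weak sense,
$$\frac{\d\boldsymbol{w}_n}{\d t}=-\nu\A\boldsymbol{w}_n-\big[\B_{N}(\v_n+\mathfrak{Z}_n)-\B_{N}(\v+\mathfrak{Z})\big]+\chi(\mathfrak{Z}_n-\mathfrak{Z})+\mathcal{P}(\f_n-\f),$$
with $\boldsymbol{w}_n(0)\to\boldsymbol{0}$ in $\H$ by the hypotheses on the data (and, in the application, also by the continuity \eqref{O-U_conti}). As $\boldsymbol{w}_n\in\mathrm{C}([0,T];\H)\cap\mathrm{L}^2(0,T;\V)$ with $\d\boldsymbol{w}_n/\d t\in\mathrm{L}^2(0,T;\V')$, the map $t\mapsto\|\boldsymbol{w}_n(t)\|_{\H}^2$ is absolutely continuous, so that testing with $\boldsymbol{w}_n$ is legitimate. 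I would then split
$$\B_{N}(\v_n+\mathfrak{Z}_n)-\B_{N}(\v+\mathfrak{Z})=\big[\B_{N}(\v_n+\mathfrak{Z}_n)-\B_{N}(\v+\mathfrak{Z}_n)\big]+\big[\B_{N}(\v+\mathfrak{Z}_n)-\B_{N}(\v+\mathfrak{Z})\big],$$
and handle the first bracket by the monotonicity estimate \eqref{CL1} (Claim II in the proof of Theorem \ref{solution}) with the fixed field there taken to be $\mathfrak{Z}_n$, which is legitimate since the constant $\frac{7^7\cdot N^8}{2^{13}\cdot\nu^7}$ depends only on $N$ and $\nu$; this gives
$$-\big\langle\B_{N}(\v_n+\mathfrak{Z}_n)-\B_{N}(\v+\mathfrak{Z}_n),\boldsymbol{w}_n\big\rangle\le\frac{\nu}{2}\|\boldsymbol{w}_n\|_{\V}^2+\frac{7^7\cdot N^8}{2^{13}\cdot\nu^7}\|\boldsymbol{w}_n\|_{\H}^2.$$
The remaining linear terms $\chi(\mathfrak{Z}_n-\mathfrak{Z},\boldsymbol{w}_n)$ and $\langle\mathcal{P}(\f_n-\f),\boldsymbol{w}_n\rangle$ are bounded by Cauchy--Schwarz, Poincar\'e \eqref{poin} and Young, contributing at most $\frac{\nu}{8}\|\boldsymbol{w}_n\|_{\V}^2+C\big(\|\mathfrak{Z}_n-\mathfrak{Z}\|_{\H}^2+\|\f_n-\f\|_{\H^{-1}(\mathcal{O})}^2\big)$.

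The crux is the perturbation term $\langle\B_{N}(\v+\mathfrak{Z}_n)-\B_{N}(\v+\mathfrak{Z}),\boldsymbol{w}_n\rangle$, which I would estimate so that $\mathfrak{Z}_n-\mathfrak{Z}$ enters only through its $\L^4(\mathcal{O})$-norm. Writing $\boldsymbol{b}_n=\v+\mathfrak{Z}_n$, $\boldsymbol{b}=\v+\mathfrak{Z}$ (so $\boldsymbol{b}_n-\boldsymbol{b}=\mathfrak{Z}_n-\mathfrak{Z}$), I would expand $F_{N}(\|\boldsymbol{b}_n\|_{\L^4(\mathcal{O})})\,b(\boldsymbol{b}_n,\boldsymbol{b}_n,\boldsymbol{w}_n)-F_{N}(\|\boldsymbol{b}\|_{\L^4(\mathcal{O})})\,b(\boldsymbol{b},\boldsymbol{b},\boldsymbol{w}_n)$, use the skew-symmetry \eqref{b0} to move \emph{every} derivative onto $\boldsymbol{w}_n\in\V$ (this is essential, since $\mathfrak{Z}_n,\mathfrak{Z}\in\H\cap\L^4(\mathcal{O})$ need not lie in $\V$), estimate the difference of the $F_N$-factors by \eqref{FN2}, and repeatedly invoke the saturation bound \eqref{FN1}, which turns every occurrence of $F_{N}(\|\boldsymbol{b}_n\|_{\L^4(\mathcal{O})})\|\boldsymbol{b}_n\|_{\L^4(\mathcal{O})}$ or $F_{N}(\|\boldsymbol{b}\|_{\L^4(\mathcal{O})})\|\boldsymbol{b}\|_{\L^4(\mathcal{O})}$ into $\le N$ and yields $F_{N}(\|\boldsymbol{b}_n\|_{\L^4(\mathcal{O})})\|\boldsymbol{b}\|_{\L^4(\mathcal{O})}\le N+\|\mathfrak{Z}_n-\mathfrak{Z}\|_{\L^4(\mathcal{O})}$, together with \eqref{lady}. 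This gives a bound of the form $C\big(N+\|\mathfrak{Z}_n-\mathfrak{Z}\|_{\L^4(\mathcal{O})}\big)\|\mathfrak{Z}_n-\mathfrak{Z}\|_{\L^4(\mathcal{O})}\|\boldsymbol{w}_n\|_{\V}$; Young's inequality then absorbs $\frac{\nu}{8}\|\boldsymbol{w}_n\|_{\V}^2$ and leaves a remainder $\rho_n(t)=C\big(N^2\|\mathfrak{Z}_n-\mathfrak{Z}\|_{\L^4(\mathcal{O})}^2+\|\mathfrak{Z}_n-\mathfrak{Z}\|_{\L^4(\mathcal{O})}^4\big)$. Using $\mathfrak{Z}_n\to\mathfrak{Z}$ in $\mathrm{L}^2(0,T;\L^4(\mathcal{O}))$, the uniform bound on $\{\mathfrak{Z}_n\}$ there, and a dominated-convergence argument along subsequences (or the $\mathrm{L}^4(0,T;\L^4(\mathcal{O}))$-convergence provided by Proposition \ref{Ap}(1) in the application), one gets $\int_0^T\rho_n(t)\,\d t\to0$.

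Collecting all the estimates and choosing the small parameters so that the right-hand $\|\boldsymbol{w}_n\|_{\V}^2$ contributions total at most $\frac34\nu\|\boldsymbol{w}_n\|_{\V}^2$, one arrives at
$$\frac{\d}{\d t}\|\boldsymbol{w}_n(t)\|_{\H}^2+\frac{\nu}{2}\|\boldsymbol{w}_n(t)\|_{\V}^2\le c_{N,\nu}\|\boldsymbol{w}_n(t)\|_{\H}^2+C\,\xi_n(t),\qquad \xi_n\in\mathrm{L}^1(0,T),\quad \|\xi_n\|_{\mathrm{L}^1(0,T)}\to0,$$
with $c_{N,\nu}$ depending only on $N,\nu$. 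Gr\"onwall's inequality then gives
$$\sup_{t\in[0,T]}\|\boldsymbol{w}_n(t)\|_{\H}^2+\frac{\nu}{2}\int_0^T\|\boldsymbol{w}_n(s)\|_{\V}^2\,\d s\le e^{c_{N,\nu}T}\big(\|\boldsymbol{w}_n(0)\|_{\H}^2+C\|\xi_n\|_{\mathrm{L}^1(0,T)}\big)\longrightarrow0,$$
which is exactly \eqref{5.20}; evaluating at $t=T$ gives the last assertion. I expect the only genuinely delicate point to be the treatment of the nonlinear perturbation term $\langle\B_{N}(\v+\mathfrak{Z}_n)-\B_{N}(\v+\mathfrak{Z}),\boldsymbol{w}_n\rangle$: one must integrate by parts so that no gradient ever lands on $\mathfrak{Z}_n$ or $\mathfrak{Z}$, and exploit the boundedness built into $F_{N}$ via \eqref{FN1}--\eqref{FN2} so that the resulting remainder depends on $\mathfrak{Z}_n-\mathfrak{Z}$ only through its $\L^4(\mathcal{O})$-norm, for which the convergence hypothesis is available.
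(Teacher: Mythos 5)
Your proposal follows the same route as the paper's proof: write the equation for the difference $\boldsymbol{w}_n=\v_n-\v$, test with $\boldsymbol{w}_n$, absorb the nonlinearity via the monotonicity estimate \eqref{CL1}, and close with Gr\"onwall. In fact you are more careful than the paper on the one point where the two solutions see \emph{different} Ornstein--Uhlenbeck terms: the paper identifies $-\nu\A\mathfrak{X}_n-\B_N(\v_n+\mathfrak{Z}_n)+\B_N(\v+\mathfrak{Z})$ with $-[\mathcal{G}_N(\v_n)-\mathcal{G}_N(\v)]$ and applies \eqref{CL1} directly, which tacitly assumes the same $\mathfrak{Z}$ in both arguments; your splitting $\B_N(\v_n+\mathfrak{Z}_n)-\B_N(\v+\mathfrak{Z}_n)+\B_N(\v+\mathfrak{Z}_n)-\B_N(\v+\mathfrak{Z})$ and the separate treatment of the second bracket is exactly the correction that is needed, and your pointwise bound $C\big(N+\|\mathfrak{Z}_n-\mathfrak{Z}\|_{\L^4(\mathcal{O})}\big)\|\mathfrak{Z}_n-\mathfrak{Z}\|_{\L^4(\mathcal{O})}\|\boldsymbol{w}_n\|_{\V}$ for that bracket is attainable via \eqref{b0}, \eqref{FN1}, \eqref{FN2}.

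The one step that does not go through as written is the claim that the resulting remainder $\rho_n(t)=C\big(N^2\|\mathfrak{Z}_n-\mathfrak{Z}\|_{\L^4(\mathcal{O})}^2+\|\mathfrak{Z}_n-\mathfrak{Z}\|_{\L^4(\mathcal{O})}^4\big)$ satisfies $\int_0^T\rho_n(t)\,\d t\to0$ under the stated hypothesis $\mathfrak{Z}_n\to\mathfrak{Z}$ in $\mathrm{L}^2(0,T;\H\cap\L^4(\mathcal{O}))$. Convergence in $\mathrm{L}^2(0,T;\L^4(\mathcal{O}))$ together with boundedness in that same space does \emph{not} control $\int_0^T\|\mathfrak{Z}_n-\mathfrak{Z}\|_{\L^4(\mathcal{O})}^4\,\d t$: a spike of height $n^{1/4}$ supported on a time interval of length $1/n$ tends to zero in $\mathrm{L}^2$ in time but not in $\mathrm{L}^4$ in time, and ``dominated convergence along subsequences'' requires a dominating function which the hypotheses do not supply. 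So either the hypothesis must be read as (or strengthened to) $\mathfrak{Z}_n\to\mathfrak{Z}$ in $\mathrm{L}^4(0,T;\L^4(\mathcal{O}))$ --- which, as you note, is what is actually available in the application through \eqref{O-U_conti} with $q=4$ --- or the quadratic-in-$(\mathfrak{Z}_n-\mathfrak{Z})$ factor in your bound must be avoided, which the $F_N$-saturation does not appear to permit. Apart from this integrability point, which you flag yourself, the argument is complete and correct.
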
	
	\begin{proof}
		Let us introduce the following notations which help us to simplify the proof: 
		\begin{align*}
			\v_n (\cdot) &= \v(\cdot, \mathfrak{Z}_n)\boldsymbol{x}_n, \ \  \v(\cdot) = \v(\cdot, \mathfrak{Z})\boldsymbol{x},\ \   \mathfrak{X}_n (\cdot)= \v(\cdot, \mathfrak{Z}_n)\boldsymbol{x}_n - \v(\cdot, \mathfrak{Z})\boldsymbol{x},\\     
			 \hat{ \mathfrak{Z}}_n(\cdot) &= \mathfrak{Z}_n(\cdot) - \mathfrak{Z}(\cdot), \ \  \hat{ \f}_n(\cdot) = \mathcal{P}\f_n(\cdot) - \mathcal{P}\f(\cdot).
		\end{align*}
		Then $\mathfrak{X}_n$ satisfies the following system:
		\begin{equation}\label{finite-dimS_1}
			\left\{
			\begin{aligned}
				\frac{\d\mathfrak{X}_n}{\d t} &= -\nu \A\mathfrak{X}_n  - \B_N(\v_n  + \mathfrak{Z}_n ) + \B_N(\v + \mathfrak{Z}) + \chi \hat{ \mathfrak{Z}}_n + \hat{ \f}_n
				\\ & = -\left[\mathcal{G}_{N}(\v_n)-\mathcal{G}_{N}(\v)\right] + \chi \hat{ \mathfrak{Z}}_n + \hat{ \f}_n, \\
				\mathfrak{X}_n(0)&= \boldsymbol{x}_n - \boldsymbol{x}.
			\end{aligned}
			\right.
		\end{equation}
		Multiplying by $\mathfrak{X}_n(\cdot)$ to the first equation in \eqref{finite-dimS_1}, integrating over $\mathcal{O}$ and using \eqref{CL1}, we obtain 
		\begin{align}\label{5.22}
			\frac{1}{2}& \frac{\d}{\d t}\|\mathfrak{X}_n(t) \|^2_{\H}
			\nonumber\\
			&= -\left<\mathcal{G}_{N}(\v_n(t))-\mathcal{G}_{N}(\v(t)), \mathfrak{X}_n(t)\right>  + \chi( \hat{ \mathfrak{Z}}_n(t), \ \mathfrak{X}_n(t))+ \langle\hat{ \f}_n, \mathfrak{X}_n(t)\rangle
			  \nonumber\\
			& \leq - \frac{\nu}{2} \|\mathfrak{X}_n(t)\|^2_{\V}   + \frac{ 7^{7}\cdot N^8}{2^{13}\cdot \nu^{7}}\|\mathfrak{X}_n(t)\|^2_{\H} + \chi(\hat{ \mathfrak{Z}}_n(t), \ \mathfrak{X}_n(t))+ \langle\hat{ \f}_n, \mathfrak{X}_n(t)\rangle,
		\end{align}
		for a.e. $t\in [0,T]$. Using  H\"older's inequality, \eqref{poin} and Young's inequality, we have 
		\begin{align}\label{5.23}
			|\chi(\hat{ \mathfrak{Z}}_n, \ \mathfrak{X}_n)|& \leq \chi \|\mathfrak{X}_n\|_{\H} \|\hat{ \mathfrak{Z}}_n\|_{\H} \leq C \|\mathfrak{X}_n\|_{\V} \|\hat{ \mathfrak{Z}}_n\|_{\V} \leq \frac{\nu}{8}\|\mathfrak{X}_n\|^2_{\V}+C\|\hat{ \mathfrak{Z}}_n\|^2_{\H},\\
			|	\langle\hat{ \f}_n, \ \mathfrak{X}_n\rangle|& \leq  \|\mathfrak{X}_n\|_{\V} \|\hat{ \f}_n\|_{\H^{-1}(\mathcal{O})}\leq \frac{\nu}{8}\|\mathfrak{X}_n\|^2_{\V} + C \|\hat{ \f}_n\|^2_{\H^{-1}(\mathcal{O})}.
		\end{align} 
	 Combining \eqref{5.22} and \eqref{5.23}, we deduce 
		\begin{align*}
			&\frac{\d}{\d t}\|\mathfrak{X}_n(t) \|^2_{\H} + \frac{\nu}{2} \|\mathfrak{X}_n(t)\|^2_{\V}    \leq C N^8 \|\mathfrak{X}_n(t)\|^2_{\H}  +  C \|\hat{ \mathfrak{Z}}_n(t)\|^2_{\H} + C \|\hat{\f}_n\|^2_{\H^{-1}(\mathcal{O})}, 
		\end{align*}
		for a.e. $t\in[0,T]$.	In view of Gronwall's inequality, we obtain
		\begin{align}\label{Energy_esti_n_2_1}
			&\|\mathfrak{X}_n(t)\|^2_{\H} + \frac{\nu}{2} \int_{0}^{t} \|\mathfrak{X}_n(s) \|^2_{\V}\d s    \leq \biggl\{\|\mathfrak{X}_n(0)\|^2_{\H}  + C \int_{0}^{t} \big[\|\hat{ \mathfrak{Z}}_n(s)\|^2_{\H} + \|\hat{\f}_n\|^2_{\H^{-1}(\mathcal{O})}\big] \d s\biggr\}e^{CN^8t}, 
		\end{align}
		for all $ t\in[0, T]$.	Since, $\|\mathfrak{X}_n(0)\|_{\H} = \|\boldsymbol{x}_n- \boldsymbol{x}\|_{\H} \to 0 $ and $\int_{0}^{T} \big[\|\hat{ \mathfrak{Z}}_n(s)\|^2_{\H} + \|\hat{\f}_n\|^2_{\H^{-1}(\mathcal{O})}\big]\ \d s \to 0$ as $n\to \infty$, then \eqref{Energy_esti_n_2_1} asserts that $\|\mathfrak{X}_n(t)\|_{\H} + \frac{\nu}{2} \int_{0}^{t} \|\mathfrak{X}_n(s) \|^2_{\V}\d s \to 0$ as $n\to\infty$ uniformly in $t\in[0, T].$ Since $\v_n(\cdot)$ and $\v(\cdot)$ are continuous, we further have  $$\v(\cdot, \mathfrak{Z}_n)\boldsymbol{x}_n \to \v(\cdot, \mathfrak{Z})\boldsymbol{x}\  \text{ in } \ \C([0, T]; \H)\cap \mathrm{L}^2(0, T; \V).$$
		This completes the proof.
	\end{proof}
	\begin{definition}
		We define a map $\Phi_{\chi} : [0,\infty) \times \Omega \times \H \to \H$ by
		\begin{align}
			(t, \omega, \boldsymbol{x}) \mapsto \v^{\chi}(t)  + \mathfrak{Z}_{\chi}(\omega)(t) \in \H,
		\end{align}
		where $\v^{\chi}(t) = \v(t, \mathfrak{Z}_{\chi}(\omega)(t))(\boldsymbol{x} - \mathfrak{Z}_{\chi}(\omega)(0))$ is a solution to the system \eqref{csgmnse} with the initial condition $\boldsymbol{x} - \mathfrak{Z}_{\chi}(\omega)(0).$
	\end{definition}
	\begin{proposition}\label{alpha_ind}
		If $\chi_1, \chi_2 \geq 0$, then $\Phi_{\chi_1} = \Phi_{\chi_2}.$
	\end{proposition}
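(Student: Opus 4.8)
The plan is to show that $\u_i(t):=\Phi_{\chi_i}(t,\omega,\x)=\v^{\chi_i}(t)+\mathfrak{Z}_{\chi_i}(\omega)(t)$ is, for $i=1,2$, a solution of one and the \emph{same}, $\chi$-free, stochastic problem \eqref{S-GMNSE} with initial datum $\x$, and then to close by a uniqueness estimate in the spirit of Step II of the proof of Theorem \ref{solution}. Fix $\chi_1,\chi_2\geq0$, $\omega\in\Omega(\xi,\mathrm{E})$ and $\x\in\H$; since $\v^{\chi_i}(0)=\x-\mathfrak{Z}_{\chi_i}(\omega)(0)$, we have $\u_1(0)=\u_2(0)=\x$. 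First I would add the weak formulation of the transformed equation \eqref{csgmnse} for $\v^{\chi_i}$ (Definition \ref{defn5.9}) to the Ornstein--Uhlenbeck equation \eqref{OUPe1} (in its integrated form \eqref{oup1}) for $\mathfrak{Z}_{\chi_i}$, both tested against $\psi\in\V$. The two occurrences of the term $\chi_i\mathfrak{Z}_{\chi_i}$ cancel, the Stokes contributions $-\nu\A\v^{\chi_i}$ and $-\nu\A\mathfrak{Z}_{\chi_i}$ combine, and $\B_N(\v^{\chi_i}+\mathfrak{Z}_{\chi_i})=\B_N(\u_i)$, leaving
\begin{align*}
(\u_i(t),\psi)=(\x,\psi)-\int_0^t\langle\nu\A\u_i(\tau)+\B_N(\u_i(\tau))-\mathcal{P}\f,\psi\rangle\,\d\tau+\int_0^t(\psi,\d\W(\tau)),
\end{align*}
for all $\psi\in\V$ and $t>0$, where $\W$ is the canonical Wiener process on $\Omega(\xi,\mathrm{E})$, the \emph{same} for $i=1,2$. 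This is exactly the point of the Doss--Sussmann change of variable: the right-hand side no longer depends on $\chi_i$ except through $\u_i$ itself.

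Next I would set $\mathfrak{Y}:=\u_1-\u_2=(\v^{\chi_1}-\v^{\chi_2})+(\mathfrak{Z}_{\chi_1}-\mathfrak{Z}_{\chi_2})$ and subtract the two identities above; the stochastic integrals are identical and cancel, so that $\mathfrak{Y}(0)=\boldsymbol{0}$ and, pathwise,
\begin{align*}
\left\langle\frac{\d\mathfrak{Y}(t)}{\d t},\psi\right\rangle=-\langle\nu\A\mathfrak{Y}(t)+\B_N(\u_1(t))-\B_N(\u_2(t)),\psi\rangle,\qquad\forall\,\psi\in\V,\ \text{a.e. }t\in[0,T].
\end{align*}
Before running the energy method I must check that $\mathfrak{Y}\in\mathrm{C}([0,T];\H)\cap\mathrm{L}^2(0,T;\V)$ with $\d\mathfrak{Y}/\d t\in\mathrm{L}^2(0,T;\V')$. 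For the $\v$-part this is part of Definition \ref{defn5.9}; for the $\mathfrak{Z}$-part I would use Corollary \ref{Diff_z1}, by which $\mathfrak{Z}_{\chi_1}-\mathfrak{Z}_{\chi_2}$ solves the \emph{deterministic} parabolic equation \eqref{Dif_z1} with continuous $\mathrm{X}$-valued forcing, so that the smoothing of the analytic Stokes semigroup upgrades it to the required regularity. Then $t\mapsto\|\mathfrak{Y}(t)\|_{\H}^2$ is absolutely continuous and $\tfrac12\tfrac{\d}{\d t}\|\mathfrak{Y}(t)\|_{\H}^2=-\nu\|\mathfrak{Y}(t)\|_{\V}^2-\langle\B_N(\u_1(t))-\B_N(\u_2(t)),\mathfrak{Y}(t)\rangle$.

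Finally I would invoke the monotonicity estimate \eqref{CL1}. Its proof shows that \eqref{CL1} holds for the operator $\mathcal{G}_N(\cdot)=\nu\A\cdot+\B_N(\cdot+\mathfrak{Z})$ with an arbitrary fixed shift $\mathfrak{Z}\in\H\cap\L^4(\mathcal{O})$ --- in particular with $\mathfrak{Z}=\boldsymbol{0}$, where it reads $\langle\nu\A\boldsymbol{a}+\B_N(\boldsymbol{a})-\nu\A\boldsymbol{b}-\B_N(\boldsymbol{b}),\boldsymbol{a}-\boldsymbol{b}\rangle+\tfrac{7^7\cdot N^8}{2^{13}\cdot\nu^7}\|\boldsymbol{a}-\boldsymbol{b}\|_{\H}^2\geq0$ for all $\boldsymbol{a},\boldsymbol{b}\in\V$. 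Applying this with $\boldsymbol{a}=\u_1(t)$, $\boldsymbol{b}=\u_2(t)$ and combining with the energy identity yields $\tfrac{\d}{\d t}\|\mathfrak{Y}(t)\|_{\H}^2\leq\tfrac{7^7\cdot N^8}{2^{12}\cdot\nu^7}\|\mathfrak{Y}(t)\|_{\H}^2$ for a.e.\ $t\in[0,T]$, whence Grönwall's inequality together with $\mathfrak{Y}(0)=\boldsymbol{0}$ forces $\mathfrak{Y}\equiv\boldsymbol{0}$ on $[0,T]$. Since $T>0$, $\omega$ and $\x$ were arbitrary, this gives $\Phi_{\chi_1}=\Phi_{\chi_2}$. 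The routine energy estimate is not the difficulty here --- it essentially reuses the uniqueness argument from Theorem \ref{solution}. The two steps that genuinely need care are the algebraic bookkeeping that makes the $\chi_i$-dependent terms drop out when \eqref{csgmnse} and \eqref{OUPe1} are combined, and --- more substantially --- the regularity of the \emph{difference} $\mathfrak{Z}_{\chi_1}-\mathfrak{Z}_{\chi_2}$: each $\mathfrak{Z}_{\chi_i}$ by itself is only continuous with values in $\mathrm{X}=\H\cap\L^4(\mathcal{O})$ (so $\u_i\notin\mathrm{L}^2(0,T;\V)$ in general), and it is precisely this difference, governed by the smooth deterministic equation of Corollary \ref{Diff_z1}, that belongs to $\mathrm{L}^2(0,T;\V)$ with an $\H$-continuous trajectory and a $\V'$-valued derivative, which is what lets the energy identity be written for $\mathfrak{Y}$.
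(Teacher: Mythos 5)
Your proposal is correct and follows essentially the same route as the paper: both arguments reduce to the observation that $\u^{\chi_1}-\u^{\chi_2}$ satisfies a $\chi$-free deterministic equation with zero initial data (the $\chi_i\mathfrak{Z}_{\chi_i}$ terms cancelling between \eqref{csgmnse} and the Ornstein--Uhlenbeck equation, i.e.\ Corollary \ref{Diff_z1}), that the difference $\mathfrak{Z}_{\chi_1}-\mathfrak{Z}_{\chi_2}$ has the extra regularity $\mathrm{C}([0,T];\H)\cap\mathrm{L}^2(0,T;\V)$ needed for the energy identity (the paper cites Lions--Magenes where you invoke parabolic smoothing, but it is the same maximal-regularity fact), and then the monotonicity estimate \eqref{CL1} with $\mathfrak{Z}=\boldsymbol{0}$ plus Gr\"onwall forces the difference to vanish.
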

	\begin{proof}
		Let us fix $\boldsymbol{x}\in \H.$ We need to prove that $$\v^{\chi_1}(t) + \mathfrak{Z}_{\chi_1}(t) = \v^{\chi_2}(t) + \mathfrak{Z}_{\chi_2}(t), \ \ \ t\geq 0, $$ where $\mathfrak{Z}_{\chi}$ is defined by \eqref{DOu1} and $\v^{\chi}$ is a solution to the system \eqref{csgmnse}. 
		
		Since $\mathfrak{Z}_{\chi_1}, \mathfrak{Z}_{\chi_2} \in \mathrm{C}([0,T];\H\cap \mathbb{L}^4(\mathcal{O}))$ and  $\mathfrak{Z}_{\chi_2}-\mathfrak{Z}_{\chi_1}$  satisfies \eqref{Dif_z1}, it implies from  \cite[Vol. I\!I, Theorem 3.2, p.22]{Lion-Mag} that  
		\begin{align}\label{359}
			\mathfrak{Z}_{\chi_2}-\mathfrak{Z}_{\chi_1}\in \mathrm{C}([0,T];\H)\cap \mathrm{L}^2(0,T;\V) \ \text{ such that } \partial_{t}(\mathfrak{Z}_{\chi_2}-\mathfrak{Z}_{\chi_1}) \in \mathrm{L}^2(0,T,\V^\prime).
		\end{align}

		From \eqref{csgmnse}, we infer that $\v^{\chi_1}(0) - \v^{\chi_2}(0) = - (\mathfrak{Z}_{\chi_1}(0) - \mathfrak{Z}_{\chi_2}(0)) $ and 
		\begin{align*}
			\frac{\d(\v^{\chi_1}(t) - \v^{\chi_2}(t))}{\d t} = &-\nu \A(\v^{\chi_1}(t) - \v^{\chi_2}(t)) + \big[\chi_1 \mathfrak{Z}_{\chi_1}(t) -  \chi_2 \mathfrak{Z}_{\chi_2}(t)\big]\nonumber\\ &- [\B_N(\v^{\chi_1}(t) + \mathfrak{Z}_{\chi_1}(t))-\B_N(\v^{\chi_2}(t) + \mathfrak{Z}_{\chi_2}(t))], 
		\end{align*}
		for a.e. $t\in[0,T]$,	in $\V'$. It implies from Theorem \ref{solution} that 
		\begin{align}\label{360}
			\v^{\chi_1}-\v^{\chi_2}\in \mathrm{C}([0,T];\H)\cap \mathrm{L}^2(0,T;\V) \ \text{ such that } \ \partial_{t}(\v^{\chi_1}-\v^{\chi_2}) \in \mathrm{L}^2(0,T,\V^\prime).
		\end{align}
		 Adding the equation \eqref{Dif_z1} to the above equation, we obtain 
		\begin{align}\label{Enrgy}
			\frac{\d(\u^{\chi_1}(t) - \u^{\chi_2}(t))}{\d t}& = -\nu \A(\u^{\chi_1}(t) - \u^{\chi_2}(t)) - [\B_N(\u^{\chi_1}(t))-\B_N(\u^{\chi_2}(t))], 
		\end{align}
		for a.e. $t\in[0,T]$,	in $\V'$,	where $\u^{\chi_1}(t)=\v^{\chi_1}(t) + \mathfrak{Z}_{\chi_1}(t),   \u^{\chi_2}(t) = \v^{\chi_2}(t) + \mathfrak{Z}_{\chi_2}(t), \  t\geq 0$ and $\u^{\chi_1}(0) - \u^{\chi_2}(0)= \boldsymbol{0}.$ In view of \eqref{359} and \eqref{360}, we infer that 
		\begin{align*}
			\u^{\chi_1}-\u^{\chi_2}\in \mathrm{C}([0,T];\H)\cap \mathrm{L}^2(0,T;\V) \ \text{ such that } \ \partial_{t}(\u^{\chi_1}-\u^{\chi_2}) \in \mathrm{L}^2(0,T,\V^\prime).
		\end{align*}
	Therefore, from \cite[Ch. I\!I\!I, Lemma 1.2]{Temam}, we have 
		\begin{align*}
		\frac{\d}{\d t} \|\u^{\chi_1}(t) - \u^{\chi_2}(t)\|^2_\H= \big\langle -\nu \A(\u^{\chi_1}(t) - \u^{\chi_2}(t)) + \B_N(\u^{\chi_1}(t))-\B_N(\u^{\chi_2}(t)),  \u^{\chi_1}(t) - \u^{\chi_2}(t) \big\rangle.
	\end{align*}
	Applying similar steps as have performed for \eqref{CL1}, we arrive at
		\begin{align*}
			\frac{\d}{\d t} \|\u^{\chi_1}(t) - \u^{\chi_2}(t)\|^2_\H\leq  \frac{ 7^{7}\cdot N^8}{2^{12}\cdot \nu^{7}} \|\u^{\chi_1}(t) - \u^{\chi_2}(t)\|^2_\H,
		\end{align*}
		for a.e. $t\in[0,T]$. 	Since   $\|\u^{\chi_1}(0) - \u^{\chi_2}(0)\|^2_\H = 0,$ by applying variation of constants formula, we deduce  that $\|\u^{\chi_1}(t) - \u^{\chi_2}(t)\|^2_\H = 0$, for all $t\geq 0$, which completes the proof.
	\end{proof}
	It is proved in Proposition \ref{alpha_ind} that the map $\Phi_{\chi}$ does not depend on $\chi$ and hence, from now onward, it will be denoted by $\Phi$. A proof of the following result is similar to that in \cite[Theorem 6.15]{BL} and hence we omit it here.

	\begin{theorem}
		$(\Phi, \vartheta)$ is an RDS.
	\end{theorem}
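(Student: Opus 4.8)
The plan is to verify the four defining properties of a (continuous) RDS for $\Phi$ over the MDS $(\Omega(\xi,\mathrm{E}),\mathcal{F},\mathbb{P},\vartheta)$ of Proposition \ref{m-DS1}: (a) $\Phi(0,\omega,\cdot)=\mathrm{id}_{\H}$; (b) the cocycle identity $\Phi(t+s,\omega,\boldsymbol{x})=\Phi(t,\vartheta_s\omega,\Phi(s,\omega,\boldsymbol{x}))$ for all $s,t\ge 0$, $\omega$, $\boldsymbol{x}$; (c) joint $(\mathcal{B}([0,\infty))\otimes\mathcal{F}\otimes\mathcal{B}(\H),\mathcal{B}(\H))$-measurability of $\Phi$; and (d) continuity of $\boldsymbol{x}\mapsto\Phi(t,\omega,\boldsymbol{x})$ (and of $t\mapsto\Phi(t,\omega,\boldsymbol{x})$). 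By Proposition \ref{alpha_ind} we may fix any $\chi\ge 0$ and write $\Phi(t,\omega,\boldsymbol{x})=\v(t,\mathfrak{Z}_{\chi}(\omega))(\boldsymbol{x}-\mathfrak{Z}_{\chi}(\omega)(0))+\mathfrak{Z}_{\chi}(\omega)(t)$, with $\v(\cdot,\mathfrak{Z})(\cdot)$ the unique weak solution of \eqref{csgmnse} from Theorem \ref{solution}. Property (a) is immediate: at $t=0$ the solution of \eqref{csgmnse} equals its initial datum $\boldsymbol{x}-\mathfrak{Z}_{\chi}(\omega)(0)$, whence $\Phi(0,\omega,\boldsymbol{x})=\boldsymbol{x}-\mathfrak{Z}_{\chi}(\omega)(0)+\mathfrak{Z}_{\chi}(\omega)(0)=\boldsymbol{x}$.

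For (c) and (d): by Proposition \ref{Ap}, applied with $\mathbb{A}=\nu\A+\chi\I$ and $\mathbb{Y}=\mathrm{X}$ as in Subsection \ref{O-Up}, the map $\omega\mapsto\mathfrak{Z}_{\chi}(\omega)=\hat{\z}((\nu\A+\chi\I);(\nu\A+\chi\I)^{-\delta}\omega)$ is continuous, hence Borel measurable, from $\Omega(\xi,\mathrm{E})$ into $C_{1/2}(\mathbb{R};\mathrm{X})$, and therefore into $\mathrm{C}([0,T];\H\cap\L^4(\mathcal{O}))$ and $\mathrm{L}^2(0,T;\H\cap\L^4(\mathcal{O}))$ for every $T>0$; in particular $\omega\mapsto\mathfrak{Z}_{\chi}(\omega)(0)\in\H$ is measurable. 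Theorem \ref{RDS_Conti1} says precisely that $(\boldsymbol{x},\mathfrak{Z})\mapsto\v(\cdot,\mathfrak{Z})(\boldsymbol{x})$ is continuous from $\H\times\mathrm{L}^2(0,T;\H\cap\L^4(\mathcal{O}))$ into $\mathrm{C}([0,T];\H)$. Composing, $(t,\boldsymbol{x})\mapsto\Phi(t,\omega,\boldsymbol{x})$ is continuous for each $\omega$ (continuity in $t$ using $\v(\cdot,\mathfrak{Z})(\cdot)\in\mathrm{C}([0,T];\H)$ and $\mathfrak{Z}_{\chi}(\omega)\in\mathrm{C}([0,T];\H)$), while $\omega\mapsto\Phi(t,\omega,\boldsymbol{x})$ is measurable; a map jointly continuous in $(t,\boldsymbol{x})$ and measurable in $\omega$ is $\mathcal{B}([0,\infty))\otimes\mathcal{F}\otimes\mathcal{B}(\H)$-measurable by the standard Carath\'eodory-function argument, which gives (c), and (d) is part of the same observation.

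For the cocycle property (b): fix $\chi\ge 0$, $\omega$, $\boldsymbol{x}$, $s,t\ge 0$, and set $\v(r):=\v(r,\mathfrak{Z}_{\chi}(\omega))(\boldsymbol{x}-\mathfrak{Z}_{\chi}(\omega)(0))$, the unique weak solution of \eqref{csgmnse} on $[0,s+t]$. Since Definition \ref{defn5.9} is local in time, the shift $\wi\v(r):=\v(r+s)$, $r\in[0,t]$, is the weak solution of \eqref{csgmnse} on $[0,t]$ with driving path $\mathfrak{Z}_{\chi}(\omega)(\cdot+s)$ in place of $\mathfrak{Z}$ and initial datum $\wi\v(0)=\v(s)$; by the stationarity identity \eqref{stationary}, $\mathfrak{Z}_{\chi}(\omega)(\cdot+s)=\mathfrak{Z}_{\chi}(\vartheta_s\omega)(\cdot)$, so Theorem \ref{solution} (uniqueness) yields $\v(r+s)=\v(r,\mathfrak{Z}_{\chi}(\vartheta_s\omega))(\v(s))$ for $r\in[0,t]$. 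Using the case $r=0$ of \eqref{stationary}, $\mathfrak{Z}_{\chi}(\vartheta_s\omega)(0)=\mathfrak{Z}_{\chi}(\omega)(s)$, so $\Phi(s,\omega,\boldsymbol{x})-\mathfrak{Z}_{\chi}(\vartheta_s\omega)(0)=\v(s)+\mathfrak{Z}_{\chi}(\omega)(s)-\mathfrak{Z}_{\chi}(\omega)(s)=\v(s)$, and hence
\begin{align*}
\Phi(t,\vartheta_s\omega,\Phi(s,\omega,\boldsymbol{x}))&=\v(t,\mathfrak{Z}_{\chi}(\vartheta_s\omega))(\v(s))+\mathfrak{Z}_{\chi}(\vartheta_s\omega)(t)\\&=\v(s+t)+\mathfrak{Z}_{\chi}(\omega)(s+t)=\Phi(s+t,\omega,\boldsymbol{x}),
\end{align*}
which is the required identity.

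The \textbf{main obstacle} is the measurability step (c): one must be sure that the abstract construction of the stationary Ornstein--Uhlenbeck process through the operator $\hat{\z}$ on the H\"older-type path space $C^{\xi}_{1/2}(\mathbb{R};\mathrm{E})$ — and in particular after passing to the closure $\Omega(\xi,\mathrm{E})$ — actually yields a map $\omega\mapsto\mathfrak{Z}_{\chi}(\omega)$ that is continuous (not merely measurable) into exactly the function spaces $\mathrm{C}([0,T];\H)$ and $\mathrm{L}^2(0,T;\H\cap\L^4(\mathcal{O}))$ demanded by the continuous-dependence Theorem \ref{RDS_Conti1}, so that the composition is legitimately jointly measurable; this is where Assumption \ref{assump1} (the $\gamma$-radonifying hypothesis \eqref{A1}, with $\delta<1/2$) is genuinely used, via Propositions \ref{Ap} and \ref{SOUP1}. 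By contrast, once the time-shift bookkeeping and the stationarity relation \eqref{stationary} are set up, (b) reduces entirely to the pathwise uniqueness already established in Theorem \ref{solution}, and (a), (d) are direct — which is precisely why the argument runs parallel to \cite[Theorem 6.15]{BL}.
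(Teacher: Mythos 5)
Your proof is correct, and it is exactly the standard argument the paper has in mind: the paper omits its own proof, referring to \cite[Theorem 6.15]{BL}, and that reference proceeds precisely as you do — the cocycle identity from pathwise uniqueness (Theorem \ref{solution}) together with the shift relation \eqref{stationary}, and measurability/continuity from the continuity of $\hat{\z}$ (Proposition \ref{Ap}) composed with the continuous dependence result (Theorem \ref{RDS_Conti1}). Nothing further is needed.
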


	Now we are ready to present some fundamental properties of the solution to  system \eqref{S-GMNSE} with the initial data $\x\in\H$ at the initial time $s\in \R.$
The following theorem is a consequence of our previous discussion.
	\begin{theorem}\label{SGMNSE-Sol}
		In the framework of Definition \ref{Def_u}, suppose that $\u(t)=\mathfrak{Z}_{\chi}(t)+\v^{\chi}(t), t\geq s,$ where $\v^{\chi}$ is the unique solution to system \eqref{csgmnse} with initial data $\x - \mathfrak{Z}_{\chi}(s)$ at time $s$. If the process $\{\u(t), \ t\geq s\},$ has trajectories in $\mathrm{C}([s, \infty); \H) \cap  \mathrm{L}^{\frac83}_{\mathrm{loc}}([s, \infty); \L^4(\mathcal{O}))$, then it is a solution to system \eqref{S-GMNSE}. Vice-versa, if a process $\{\u(t), t\geq s\},$ with trajectories in $\mathrm{C}([s, \infty); \H) \cap \mathrm{L}^{\frac83}_{\mathrm{loc}}([s, \infty); \L^4(\mathcal{O}))$ is a solution to system \eqref{S-GMNSE}, then for any $\chi\geq 0,$ a process $\{\v^{\chi}(t), t\geq s\},$ defined by $\v^{\chi}(t) = \u(t)- \mathfrak{Z}_{\chi}(t), t\geq s,$ is a solution to \eqref{csgmnse} on $[s, \infty).$
	\end{theorem}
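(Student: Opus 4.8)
The plan is to read the statement directly off the Doss--Sussman transformation \eqref{D-S_trans}, the equation \eqref{OUPe1} satisfied by the Ornstein--Uhlenbeck process $\mathfrak{Z}_\chi$ (Proposition~\ref{SOUP1}), and the well-posedness Theorem~\ref{solution} for the transformed system \eqref{csgmnse}. Both implications amount to adding, respectively subtracting, two weak formulations and checking that the cylindrical Wiener increment $\W(t)-\W(s)$ enters with the correct sign.

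First I would fix $\chi\ge 0$ and $s\in\R$ and dispose of the regularity bookkeeping. By Proposition~\ref{SOUP1}, $\mathfrak{Z}_\chi\in\mathrm{C}_{1/2}(\R;\mathrm{X})$, hence $\mathfrak{Z}_\chi\in\mathrm{C}([s,\infty);\H\cap\L^4(\mathcal{O}))$; by Theorem~\ref{solution}, applied on $[0,\infty)$ to the time-shifted data and then transported back to $[s,\infty)$ via the stationarity identity \eqref{stationary}, the solution $\v^\chi$ of \eqref{csgmnse} with initial datum $\x-\mathfrak{Z}_\chi(s)$ lies in $\mathrm{C}([s,\infty);\H)\cap\mathrm{L}^2_{\mathrm{loc}}([s,\infty);\V)$ with $\tfrac{\d\v^\chi}{\d t}\in\mathrm{L}^2_{\mathrm{loc}}([s,\infty);\V')$. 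Ladyzhenskaya's inequality \eqref{lady}, in the form $\|\v^\chi(t)\|_{\L^4(\mathcal{O})}^{8/3}\le C\|\v^\chi(t)\|_{\H}^{2/3}\|\v^\chi(t)\|_{\V}^{2}$, upgrades this to $\v^\chi\in\mathrm{L}^{8/3}_{\mathrm{loc}}([s,\infty);\L^4(\mathcal{O}))$, so $\u=\mathfrak{Z}_\chi+\v^\chi$ automatically enjoys the trajectory regularity asserted in the theorem; conversely, if $\u$ has that regularity, then so does $\v^\chi=\u-\mathfrak{Z}_\chi$, which is exactly what Definition~\ref{defn5.9} demands once the identity of \eqref{csgmnse} has been verified.

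Next I would write the two weak formulations and combine them. From \eqref{OUPe1}, for every $\psi\in\D(\A)$ and $t\ge s$,
\begin{align*}
(\mathfrak{Z}_\chi(t),\psi)=(\mathfrak{Z}_\chi(s),\psi)-\int_s^t\big[\nu(\mathfrak{Z}_\chi(\tau),\A\psi)+\chi(\mathfrak{Z}_\chi(\tau),\psi)\big]\d\tau+\int_s^t(\psi,\d\W(\tau)),
\end{align*}
while $\v^\chi$ satisfies the identity of Definition~\ref{defn5.9}. Adding the two, using $\u=\v^\chi+\mathfrak{Z}_\chi$, the cancellation of the $\chi(\mathfrak{Z}_\chi,\psi)$ terms, the combination $\nu(\v^\chi,\A\psi)+\nu(\mathfrak{Z}_\chi,\A\psi)=\langle\nu\A\u,\psi\rangle$, and the algebraic identity $\B_N(\v^\chi+\mathfrak{Z}_\chi)=\B_N(\u,\u)=\B_N(\u)$ (immediate, since the factor $F_N$ is evaluated at $\|\v^\chi+\mathfrak{Z}_\chi\|_{\L^4(\mathcal{O})}=\|\u\|_{\L^4(\mathcal{O})}$), yields exactly the weak formulation of Definition~\ref{Def_u} for test functions in $\D(\A)$; a density argument, together with $\v^\chi\in\mathrm{L}^2_{\mathrm{loc}}(\V)$ and $\u\in\mathrm{L}^{8/3}_{\mathrm{loc}}(\L^4(\mathcal{O}))$, extends it to all $\psi\in\V$, so $\u$ solves \eqref{S-GMNSE} in the sense of Definition~\ref{Def_u}. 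Subtracting \eqref{OUPe1} from the identity of Definition~\ref{Def_u} instead, and setting $\v^\chi=\u-\mathfrak{Z}_\chi$, runs the computation in reverse and gives the converse implication; uniqueness of $\v^\chi$ is part of Theorem~\ref{solution}.

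The only genuinely delicate point — the step I would treat as the main obstacle — is that $\mathfrak{Z}_\chi$ takes values in $\H\cap\L^4(\mathcal{O})$ but not in $\V$, so $\A\u=\A\v^\chi+\A\mathfrak{Z}_\chi$ cannot be handled term by term against $\psi\in\V$: the piece $\A\mathfrak{Z}_\chi$ is meaningful only when tested against $\D(\A)$, and one must verify that, once integrated in time, it recombines with $\chi\mathfrak{Z}_\chi$ and the stochastic term into the increment $\mathfrak{Z}_\chi(t)-\mathfrak{Z}_\chi(s)$, so that the resulting identity is legitimately valid for all $\psi\in\V$. Everything else consists of the linear and nonlinear estimates already established in Section~\ref{sec5}.
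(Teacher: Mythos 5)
Your proposal is correct and is exactly the argument the paper has in mind: the paper states Theorem \ref{SGMNSE-Sol} without proof, as "a consequence of our previous discussion," meaning precisely the addition/subtraction of the weak formulation of the Ornstein--Uhlenbeck equation \eqref{OUPe1} and of the transformed system \eqref{csgmnse}, the cancellation of the $\chi\mathfrak{Z}_\chi$ terms, the identity $\B_N(\v^\chi+\mathfrak{Z}_\chi)=\B_N(\u)$, and the regularity supplied by Theorem \ref{solution} together with Ladyzhenskaya's inequality \eqref{lady}. You also correctly isolate the one delicate point (testing $\A\mathfrak{Z}_\chi$ against $\D(\A)$ and recombining the integrated terms into the increment $\mathfrak{Z}_\chi(t)-\mathfrak{Z}_\chi(s)$ before extending to $\psi\in\V$), which the paper glosses over entirely.
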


	\section{Random attractors for SGMNS equations}\label{sec6}\setcounter{equation}{0}
In this section, we prove the  main results of this work. Here, the RDS $\Phi$ is considered over the MDS $(\Omega,  {\mathcal{F}},  {\mathbb{P}},  {\vartheta})$.	The results that we have obtained in the previous sections  provide a unique solution to the system \eqref{S-GMNSE}, which is continuous with respect to the data (particularly $\x$ and $\f$). Furthermore, if we define, for $\x \in \H,\ \omega \in \Omega,$ and $t\geq s,$
	\begin{align}\label{combine_sol}
		\u(t, s;\omega, \x) := \Phi(t-s; \vartheta_s \omega)\x = \v\big(t, s; \omega, \x - \mathfrak{Z}(s)\big) + \mathfrak{Z}(t),
	\end{align}
	then the process $\{\u(t): \ t\geq s\}$ is a solution to the system \eqref{S-GMNSE}, for each $s\in \mathbb{R}$ and each $\x \in \H$.
	
	In next lemma, we give uniform estimates satisfied by the solution of system \eqref{csgmnse}.
	\begin{lemma}\label{RA1}
		Suppose that $\v$ solves the system \eqref{csgmnse} on the time interval $[a, \infty)$ with $\mathfrak{Z} \in  \mathrm{L}^{2}_{\mathrm{loc}}( \R; \H\cap{\L}^{4}(\mathcal{O}))$, $\f\in\H^{-1}(\mathcal{O})$ and $\chi\geq 0.$ Then, for any $t\geq \tau \geq a,$
		\begin{align}\label{Energy_esti1}
		&	\|\v(t)\|^2_{\H} + \frac{\nu}{2} \int_{\tau}^{t} \|\v(s)\|^{2}_{\V} e^{-\nu\lambda(t-s)} \d s 
			\nonumber\\ & \leq 
				\|\v(\tau)\|^2_{\H}\  e^{-\nu\lambda(t-\tau)}  + C\int_{\tau}^{t} \bigg[\|\mathfrak{Z}(s)\|^{2}_{\H}+ N^2\|\mathfrak{Z}(s)\|^{2}_{\L^{4}(\mathcal{O})} + \|\f\|^2_{\H^{-1}(\mathcal{O})}\bigg] e^{-\nu\lambda(t-s)} \d s,
		\end{align}
		for all $t\in [a, \infty).$
	\end{lemma}
	\begin{proof}
		From \eqref{csgmnse}, we obtain 
		\begin{align}\label{Energy_esti3}
			\frac{1}{2}\frac{\d}{\d t} \|\v(t)\|^2_{\H} 
			&=  - \nu \|\v(t)\|^2_{\V}   -\left<\B_N(\v(t)+ \mathfrak{Z}(t)),\v(t)\right> +\chi (\mathfrak{Z}(t),\v(t))+\big\langle \f, \v(t)\big \rangle,
		\end{align}
	for a.e. $t\in[0,T]$. Using \eqref{b0}, H\"older's inequality, \eqref{FN1}, \eqref{lady} and Young's inequality,  we obtain
		\begin{align*}
			|\left<\B_N(\v+ \mathfrak{Z}),\v\right>| &= |F_N(\|\v+ \mathfrak{Z}\|_{\L^4(\mathcal{O})}) \cdot b(\v+ \mathfrak{Z}, \v+ \mathfrak{Z}, \v)| 
			\nonumber\\&  = |F_N(\|\v+ \mathfrak{Z}\|_{\L^4(\mathcal{O})}) \cdot b(\v+ \mathfrak{Z},\v,  \mathfrak{Z})| 
			\nonumber\\& \leq F_N(\|\v+ \mathfrak{Z}\|_{\L^4(\mathcal{O})}) \|\v+ \mathfrak{Z}\|_{\L^4(\mathcal{O})}\|\v\|_{\V}\|  \mathfrak{Z}\|_{\L^4(\mathcal{O})}
			\nonumber\\& \leq N \|\v\|_{\V}\|  \mathfrak{Z}\|_{\L^4(\mathcal{O})}
			  \leq  \frac{\nu}{8} \|\v\|^2_{\V} + CN^2\|\mathfrak{Z}\|^2_{\L^4(\mathcal{O})}.
		\end{align*}		
			Using H\"older's  and Young's inequalities, we also obtain 
		\begin{align*}
			|\big\langle\chi\mathfrak{Z}+\f, \v\big \rangle| &\leq \big(\chi\|\mathfrak{Z}\|_{\V'} +\|\f\|_{\H^{-1}(\mathcal{O})}\big) \|\v\|_{\V}  \leq  \frac{\nu}{8} \|\v\|^2_{\V} +C\|\mathfrak{Z}\|^2_{\H}+C\|\f\|^2_{\H^{-1}(\mathcal{O})}.
		\end{align*}
		Hence, from \eqref{Energy_esti3} and \eqref{poin}, we deduce 
		\begin{align}
			\frac{\d}{\d t} \|\v(t)\|^2_{\H} & \leq  
				-\nu\lambda\|\v(t)\|^2_{\H}-\frac{\nu}{2}\|\v(t)\|^2_{\V} +C\bigg[\|\mathfrak{Z}(t)\|^{2}_{\H}+ N^2\|\mathfrak{Z}(t)\|^{2}_{{\L}^{4}(\mathcal{O})}
				 + \|\f\|^2_{\H^{-1}(\mathcal{O})}\bigg],\nonumber
		\end{align}
		and an application of  Gronwall's inequality yields \eqref{Energy_esti1}. This completes the proof.
	\end{proof}

In next two lemmas, we give some useful estimates satisfied by the solution of equation \eqref{OUPe1}.

\begin{lemma}\label{Bddns4}
	For each $\omega\in \Omega,$ we obtain 
	\begin{align*}
		\limsup_{t\to - \infty} \|\mathfrak{Z}(\omega)(t)\|^2_{\H}\  e^{\nu\lambda t} = 0.
	\end{align*}
\end{lemma}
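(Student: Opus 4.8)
The plan is to exploit the fact, established in Proposition~\ref{Ap} (see the bound \eqref{X_bound_of_z}) and in the construction of the Ornstein--Uhlenbeck process, that $\mathfrak{Z}(\omega) = \mathfrak{Z}_{\chi}(\omega)$ belongs to $C_{1/2}(\mathbb{R};\mathrm{X})$ with $\mathrm{X} = \H \cap \mathbb{L}^{4}(\mathcal{O})$. Concretely, writing $\mathfrak{Z}_{\chi}(\omega) = \hat{\z}\big((\nu\A + \chi\I);(\nu\A+\chi\I)^{-\delta}\omega\big)$ and applying \eqref{X_bound_of_z} with $\mathbb{Y}=\mathrm{X}$, there is a constant $C(\omega) \geq 0$ (depending on $\omega$ through $\|(\nu\A+\chi\I)^{-\delta}\omega\|_{C^{\xi}_{1/2}(\mathbb{R};\mathrm{X})}$, which is finite for $\omega \in \Omega(\xi,\mathrm{E})$) such that
\begin{align*}
	\|\mathfrak{Z}(\omega)(t)\|_{\mathrm{X}} \leq C(\omega)\,(1 + |t|^{1/2}), \qquad t \in \mathbb{R}.
\end{align*}

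First I would pass from the $\mathrm{X}$-norm to the $\H$-norm: since $\|\cdot\|_{\mathrm{X}} = \|\cdot\|_{\H} + \|\cdot\|_{\mathbb{L}^{4}(\mathcal{O})}$, we trivially have $\|\cdot\|_{\H} \leq \|\cdot\|_{\mathrm{X}}$, hence
\begin{align*}
	\|\mathfrak{Z}(\omega)(t)\|_{\H}^2 \leq C(\omega)^2\,(1 + |t|^{1/2})^2 \leq 2\,C(\omega)^2\,(1 + |t|), \qquad t \in \mathbb{R}.
\end{align*}
Multiplying by $e^{\nu\lambda t}$ gives $\|\mathfrak{Z}(\omega)(t)\|_{\H}^2\,e^{\nu\lambda t} \leq 2\,C(\omega)^2\,(1+|t|)\,e^{\nu\lambda t}$ for all $t\in\mathbb{R}$.

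The conclusion then follows from the elementary observation that, as $t \to -\infty$, the exponential factor $e^{\nu\lambda t}$ decays to zero (recall $\nu>0$ and $\lambda>0$ by Assumption~\ref{assumpO}) faster than the linear factor $(1+|t|)$ grows, so $(1+|t|)\,e^{\nu\lambda t} \to 0$; in fact the limit (not merely the $\limsup$) is zero. I would phrase this via $\lim_{s\to\infty}(1+s)e^{-\nu\lambda s}=0$ with $s=-t$. I do not anticipate a genuine obstacle here: the only point requiring care is to record explicitly that $\mathfrak{Z}(\omega) \in C_{1/2}(\mathbb{R};\mathrm{X})$ (equivalently, that $C(\omega)<\infty$ for each fixed $\omega$), which is exactly what Proposition~\ref{Ap} and the discussion around \eqref{DOu1}--\eqref{oup1} provide, together with the independence of $\Phi$ (hence of the relevant estimates) from $\chi$ proved in Proposition~\ref{alpha_ind}.
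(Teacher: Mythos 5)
Your proposal is correct and follows essentially the same route as the paper: both rest on the growth bound \eqref{X_bound_of_z} for the Ornstein--Uhlenbeck process (giving $\|\mathfrak{Z}(\omega)(t)\|_{\H}$ at most polynomial growth in $|t|$ for each fixed $\omega$) and then conclude because the exponential factor $e^{\nu\lambda t}$ kills any polynomial as $t\to-\infty$. The paper simply packages the growth bound as $\|\mathfrak{Z}(t)\|_{\H}\leq\rho_1|t|$ for $t\leq t_0$ rather than your sharper $C(\omega)(1+|t|^{1/2})$, which makes no difference to the argument.
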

\begin{proof}
	Let us fix $\omega\in \Omega$.  Because of \eqref{X_bound_of_z}, there exists a $\rho_1= \rho_1(\omega)\geq 0$ such that
	\begin{align}\label{rho}
		\frac{\|\mathfrak{Z}(t)\|_{\H}}{|t|} \leq \rho_1 \ \text{ and } \   \frac{\|\mathfrak{Z}(t)\|_{\L^4(\mathcal{O})}}{|t|} \leq \rho_1  \ \text{ for }\  t\leq t_0.
	\end{align}
	Therefore, we have, for every $\omega\in \Omega,$
	\begin{align*}
		\limsup_{t\to - \infty} \|\mathfrak{Z}(\omega)(t)\|^2_{\H}\  e^{\nu\lambda t}\leq&  \rho_1^2 \limsup_{t\to - \infty}  |t|^2 e^{\nu\lambda t} =0,
	\end{align*}
	which completes the proof.
\end{proof}
\begin{lemma}\label{Bddns5}
	For each $\omega\in \Omega,$ we get 
	\begin{align*}
		\int_{- \infty}^{0} \bigg\{ 1 + \|\mathfrak{Z}(t)\|^2_{\H} + N^2 \|\mathfrak{Z}(t)\|^2_{\L^4(\mathcal{O})} \bigg\}e^{\nu\lambda t} \d t < \infty.
	\end{align*}
\end{lemma}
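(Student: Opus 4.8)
The plan is to exploit the fact that, by construction, the Ornstein--Uhlenbeck process $\mathfrak{Z}=\mathfrak{Z}_{\chi}(\omega)$ grows at most like $|t|^{1/2}$ as $|t|\to\infty$, whereas the weight $e^{\nu\lambda t}$ decays exponentially as $t\to-\infty$; since $\nu\lambda>0$ (Assumption \ref{assumpO} together with $\nu>0$), the exponential dominates the polynomial and the integral over $(-\infty,0]$ converges. So this is a one-line domination argument once the growth bound on $\mathfrak{Z}$ is in hand, and I do not expect any real obstacle.

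First I would recall from Proposition \ref{SOUP1} (equivalently, from the bound \eqref{X_bound_of_z} applied with $\mathbb{Y}=\mathrm{X}$, using that $\mathfrak{Z}_{\chi}(\omega)\in C_{1/2}(\mathbb{R};\mathrm{X})$) that for each fixed $\omega\in\Omega$ there is a constant $C_{\omega}\geq 0$ with
$$\|\mathfrak{Z}(\omega)(t)\|_{\mathrm{X}}\leq C_{\omega}\bigl(1+|t|^{1/2}\bigr),\qquad t\in\mathbb{R}.$$
Since $\|\cdot\|_{\mathrm{X}}=\|\cdot\|_{\H}+\|\cdot\|_{\L^4(\mathcal{O})}$, this gives in particular $\|\mathfrak{Z}(t)\|_{\H}\leq C_{\omega}(1+|t|^{1/2})$ and $\|\mathfrak{Z}(t)\|_{\L^4(\mathcal{O})}\leq C_{\omega}(1+|t|^{1/2})$, hence, using $(1+|t|^{1/2})^2\leq 2(1+|t|)$,
$$1+\|\mathfrak{Z}(t)\|_{\H}^2+N^2\|\mathfrak{Z}(t)\|_{\L^4(\mathcal{O})}^2\leq 1+2C_{\omega}^2(1+N^2)\bigl(1+|t|\bigr),\qquad t\in\mathbb{R}.$$
(Alternatively one may simply invoke the constant $\rho_1=\rho_1(\omega)$ produced in the proof of Lemma \ref{Bddns4}, which already furnishes a linear bound on $[\,t_0,0\,]$'s complement in $(-\infty,0]$, and bound the remaining finite interval $[t_0,0]$ trivially.)

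Next I would substitute this bound into the integrand, factor out the $\omega$-dependent constant, and reduce to an elementary one-dimensional integral:
$$\int_{-\infty}^{0}\Bigl\{1+\|\mathfrak{Z}(t)\|_{\H}^2+N^2\|\mathfrak{Z}(t)\|_{\L^4(\mathcal{O})}^2\Bigr\}e^{\nu\lambda t}\,\d t\leq\bigl(1+2C_{\omega}^2(1+N^2)\bigr)\int_{-\infty}^{0}\bigl(1+|t|\bigr)e^{\nu\lambda t}\,\d t.$$
Finally I would evaluate $\int_{-\infty}^{0}(1+|t|)e^{\nu\lambda t}\,\d t=\int_{0}^{\infty}(1+s)e^{-\nu\lambda s}\,\d s=\frac{1}{\nu\lambda}+\frac{1}{(\nu\lambda)^2}<\infty$, which is the only place where $\nu\lambda>0$ is used, and the lemma follows. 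The sole conceptual point worth stressing is that the Cameron--Martin/RKHS framework (Assumption \ref{assump1}, $\delta<1/2$) is precisely what guarantees the sublinear $O(|t|^{1/2})$ growth of $\mathfrak{Z}$, so that the strictly positive Poincar\'e rate $\nu\lambda$ suffices to make the integral over $(-\infty,0]$ finite.
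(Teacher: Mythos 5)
Your proof is correct and follows essentially the same route as the paper: both arguments reduce to the fact that $\mathfrak{Z}(\omega)(\cdot)\in C_{1/2}(\mathbb{R};\mathrm{X})$ gives at most polynomial growth of $\|\mathfrak{Z}(t)\|_{\H}$ and $\|\mathfrak{Z}(t)\|_{\L^4(\mathcal{O})}$ as $t\to-\infty$, which is dominated by the exponential weight $e^{\nu\lambda t}$. The only cosmetic difference is that the paper splits the integral at some $t_0\le 0$ and uses the linear bound $\|\mathfrak{Z}(t)\|\le\rho_1|t|$ from the proof of Lemma \ref{Bddns4} on $(-\infty,t_0]$, whereas you apply the global sublinear bound \eqref{X_bound_of_z} directly; both yield the same conclusion.
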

\begin{proof}
	Note that for $t_0\leq 0$,
	\begin{align*}
		\int_{t_0}^{0} \bigg\{ 1 + \|\mathfrak{Z}(t)\|^2_{\H} +  N^2 \|\mathfrak{Z}(t)\|^2_{\L^4(\mathcal{O})} \bigg\}e^{\nu\lambda t } \d t < \infty.
	\end{align*}
	Therefore, we only need to show that the integral 
	\begin{align*}
		\int_{- \infty}^{t_0} \bigg\{ 1+ \|\mathfrak{Z}(t)\|^2_{\H} + N^2 \|\mathfrak{Z}(t)\|^2_{\L^4(\mathcal{O})}   \bigg\}e^{\nu\lambda t } \d t < \infty.
	\end{align*}
	In view of  \eqref{rho}, we obtain 
	\begin{align*} 
		&	\int_{- \infty}^{t_0} \bigg\{ \|\mathfrak{Z}(t)\|^2_{\H} +  N^2 \|\mathfrak{Z}(t)\|^2_{\L^4(\mathcal{O})} \bigg\}e^{\nu\lambda t } \d t  \leq \int_{- \infty}^{t_0} \big\{ \rho^2_1|t|^2+ N^2\rho^2_1|t|^2 \big\}e^{\nu\lambda t } \d t< \infty,
	\end{align*}
	which  completes the proof.
\end{proof}

\begin{definition}\label{RA2}
	A function $\kappa: \Omega\to (0, \infty)$ belongs to the class $\mathfrak{K}$ if and only if 
	\begin{align}
		\limsup_{t\to \infty} [\kappa(\vartheta_{-t}\omega)]^2 e^{-\nu\lambda t } = 0. 
	\end{align}
\end{definition}
Let us denote the class of all closed and bounded random sets $D$ on $\H$ by $\mathfrak{DK}$ such that the radius function $\Omega\ni \omega \mapsto \kappa(D(\omega)):= \sup\{\|x\|_{\H}:x\in D(\omega)\}$ belongs to the class $\mathfrak{K}.$ It is straight forward that the constant functions belong to $\mathfrak{K}$. 
It is clear by Definition \ref{RA2} that the class $\mathfrak{K}$ is closed with respect to sum, multiplication by a constant and if $\kappa \in \mathfrak{K}, 0\leq \bar{\kappa} \leq \kappa,$ then $\bar{\kappa}\in \mathfrak{K}.$	
\begin{proposition}\label{radius}
	We define functions $\kappa_{i}:\Omega\to (0, \infty)$, $i= 1, \ldots, 4,$ by the following formulae: for $\omega\in\Omega,$
	\begin{align*}
		[\kappa_1(\omega)]^2 &:= \|\mathfrak{Z}(\omega)(0)\|_{\H},\ \ \ &&
		[\kappa_2(\omega)]^2 := \sup_{s\leq 0} \|\mathfrak{Z}(\omega)(s)\|^2_{\H}\  e^{\nu\lambda s }, \\
		[\kappa_3(\omega)]^2 &:= \int_{- \infty}^{0} \|\mathfrak{Z}(\omega)(t)\|^2_{\H}\ e^{\nu\lambda t } \d t, \ \ \ &&
		[\kappa_4(\omega)]^2 := \int_{- \infty}^{0} \|\mathfrak{Z}(\omega)(t)\|^2_{\L^4(\mathcal{O})}\ e^{\nu\lambda t } \d t.
	\end{align*}
	Then all these functions belongs to class $\mathfrak{K}.$
	\begin{proof}Let us recall from \eqref{stationary} that $\mathfrak{Z}(\vartheta_{-t}\omega)(s) = \mathfrak{Z}(\omega)(s-t)$.
		We consider
		\begin{align*}
			\limsup_{t\to  \infty}[\kappa_1(\vartheta_{-t}\omega)]^2 e^{-\nu\lambda t } =& \limsup_{t\to \infty}\|\mathfrak{Z}(\vartheta_{-t}\omega)(0)\|^2_{\H} e^{-\nu\lambda t }	= \limsup_{t\to \infty}\|\mathfrak{Z}(\omega)(-t)\|^2_{\H} e^{-\nu\lambda t }.
		\end{align*}
		Using Lemma \ref{Bddns4}, we have, $\kappa_1 \in \mathfrak{K}.$ It can be easily seen that 
		\begin{align*}
			[\kappa_2(\vartheta_{-t}\omega)]^2 
			= & \sup_{s\leq 0}  \|\mathfrak{Z}(\omega)(s-t)\|^2_{\H}\  e^{\nu\lambda s }
			=  \sup_{\sigma\leq -t}  \|\mathfrak{Z}(\omega)(\sigma)\|^2_{\H}\  e^{\nu\lambda \sigma}\ e^{\nu\lambda t}
		\end{align*}
		and 
		\begin{align*}
			&	\limsup_{t\to \infty} [\kappa_2(\vartheta_{-t}\omega)]^2 e^{-\nu\lambda t } = \limsup_{t\to \infty} \sup_{\sigma\leq -t}  \|\mathfrak{Z}(\omega)(\sigma)\|^2_{\H}\  e^{\nu\lambda \sigma}
			= 0,
		\end{align*}
		where we have used Lemma \ref{Bddns4}. This implies that $\kappa_2\in \mathfrak{K}.$ From Lemma \ref{Bddns5} and absolute continuity of Lebesgue integrals, we obtain 
		\begin{align*}
			\bigg\{[\kappa_3(\vartheta_{-t}\omega)]^2+ [\kappa_4(\vartheta_{-t}\omega)]^2\bigg\} e^{-\nu\lambda t}
			& =\int_{- \infty}^{-t} \bigg\{  \|\mathfrak{Z}(\omega)(\sigma)\|^2_{\H} + \|\mathfrak{Z}(\omega)(\sigma)\|^2_{\L^4(\mathcal{O})}  \bigg\}e^{\nu\lambda \sigma } \d \sigma
			\\ & \to 0 \ \text{	as  }\  t\to \infty.
		\end{align*}
		This implies that $\kappa_3, \kappa_4 \in \mathfrak{K}$, which completes the proof.
	\end{proof}
\end{proposition}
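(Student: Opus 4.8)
\emph{Sketch of the argument.} The strategy is to reduce each of the four assertions to the decay estimate of Lemma \ref{Bddns4} or to the integrability estimate of Lemma \ref{Bddns5} by means of the shift identity \eqref{stationary}, which in the notation of the proposition reads $\mathfrak{Z}(\vartheta_{-t}\omega)(s) = \mathfrak{Z}(\omega)(s-t)$ for all $s, t \in \R$. Concretely, for each $i \in \{1,2,3,4\}$ I would substitute this identity into $[\kappa_i(\vartheta_{-t}\omega)]^2 e^{-\nu\lambda t}$ and then change variables $\sigma = s - t$, which pushes all the $t$-dependence into the range of a supremum (for $\kappa_1,\kappa_2$) or of a Lebesgue integral (for $\kappa_3,\kappa_4$); the factor $e^{\nu\lambda t}$ arising from $e^{\nu\lambda s} = e^{\nu\lambda\sigma}e^{\nu\lambda t}$ then cancels exactly the $e^{-\nu\lambda t}$ in front. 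Finiteness of each $\kappa_i(\omega)$ is not an issue: it follows at once from the sublinear growth bound \eqref{X_bound_of_z}, which yields $\|\mathfrak{Z}(\omega)(t)\|_{\mathrm{X}} \le C(1 + |t|^{1/2})$ and hence control of both $\|\mathfrak{Z}(\omega)(t)\|_{\H}$ and $\|\mathfrak{Z}(\omega)(t)\|_{\L^4(\mathcal{O})}$, together with Lemma \ref{Bddns5} for the two integrals; if strict positivity of $\kappa_i$ were needed, one would simply add a positive constant, the class $\mathfrak{K}$ being stable under that operation.

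For $\kappa_1$ and $\kappa_2$ the substitution produces
\begin{align*}
	[\kappa_1(\vartheta_{-t}\omega)]^2 e^{-\nu\lambda t} &= \|\mathfrak{Z}(\omega)(-t)\|_{\H}\, e^{-\nu\lambda t}, \\
	[\kappa_2(\vartheta_{-t}\omega)]^2 e^{-\nu\lambda t} &= \sup_{\sigma \le -t} \|\mathfrak{Z}(\omega)(\sigma)\|_{\H}^2\, e^{\nu\lambda\sigma},
\end{align*}
so that, letting $t \to \infty$, the right-hand sides are, respectively, a single value and the tail supremum of a quantity whose $\limsup$ as $\sigma \to -\infty$ is shown to be $0$ in Lemma \ref{Bddns4}; hence both vanish and $\kappa_1, \kappa_2 \in \mathfrak{K}$. (For $\kappa_1$ one may alternatively note directly that $\|\mathfrak{Z}(\omega)(-t)\|_{\H}$ grows at most like $t^{1/2}$ by \eqref{X_bound_of_z}, which is overwhelmed by $e^{-\nu\lambda t}$.)

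For $\kappa_3$ and $\kappa_4$ the same change of variables gives
\begin{align*}
	[\kappa_3(\vartheta_{-t}\omega)]^2 e^{-\nu\lambda t} &= \int_{-\infty}^{-t} \|\mathfrak{Z}(\omega)(\sigma)\|_{\H}^2\, e^{\nu\lambda\sigma}\,\d\sigma, \\
	[\kappa_4(\vartheta_{-t}\omega)]^2 e^{-\nu\lambda t} &= \int_{-\infty}^{-t} \|\mathfrak{Z}(\omega)(\sigma)\|_{\L^4(\mathcal{O})}^2\, e^{\nu\lambda\sigma}\,\d\sigma.
\end{align*}
Since Lemma \ref{Bddns5} guarantees that the corresponding integrals over $(-\infty,0]$ converge, their tails over $(-\infty,-t]$ tend to $0$ as $t \to \infty$ by absolute continuity of the Lebesgue integral; therefore $\kappa_3, \kappa_4 \in \mathfrak{K}$, which finishes the proof.

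I do not anticipate any genuine obstacle here. All the analytic substance — the sublinear growth of the stationary Ornstein–Uhlenbeck process and the resulting decay and integrability of its weighted norms — has already been extracted in \eqref{X_bound_of_z} and in Lemmas \ref{Bddns4}–\ref{Bddns5}, so the proposition is essentially a bookkeeping exercise. The one point to handle carefully is tracking the exponential weights through the substitution $\sigma = s - t$, so that the factor $e^{\nu\lambda t}$ it generates is annihilated exactly by the $e^{-\nu\lambda t}$ prescribed in the definition of $\mathfrak{K}$.
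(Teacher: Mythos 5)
Your proposal is correct and follows essentially the same route as the paper: apply the shift identity \eqref{stationary}, change variables so the factor $e^{\nu\lambda t}$ cancels the weight $e^{-\nu\lambda t}$ in the definition of $\mathfrak{K}$, and then invoke Lemma \ref{Bddns4} for $\kappa_1,\kappa_2$ and Lemma \ref{Bddns5} together with absolute continuity of the Lebesgue integral for $\kappa_3,\kappa_4$. The additional remarks on finiteness via \eqref{X_bound_of_z} are harmless bookkeeping and do not change the argument.
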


 The following result helps us to obtain the existence of $\mathfrak{DK}$-absorbing set (see Theorem \ref{Main_theorem_1} below) and the uniform-tail estimates (see Lemma \ref{LR} below) satisfied by solutions of the system \eqref{csgmnse}.
	
	\begin{lemma}\label{D-abH}
		Let Hypotheses \ref{assump1} and \ref{assumpO} be satisfied, and $\f\in\H^{-1}(\mathcal{O})$. Then, for every $\omega\in\Omega$ and $\D \in\mathfrak{DK}$, and for all $\xi\in[-1,0]$, there exists $t_{\D}(\omega)\geq1$ and a constant $C_{\mathrm{ubd}}>0$ such that for all $s\leq -t_{\D}(\omega),$
		\begin{align}\label{S1-ubd}
			&	e^{\nu\lambda \xi}	 \|\v(\xi,\omega;s,\x-\mathfrak{Z}(s))\|^2_{\H}   + \frac{\nu}{2} \int_{s}^{\xi}e^{\nu\lambda \tau} \|\v(\tau,\omega;s,\x-\mathfrak{Z}(s))\|^{2}_{\V}  \d \tau
			\nonumber\\ & 
			\leq  2+2\sup_{t\leq 0}\bigg\{ \|\mathfrak{Z}(t)\|^2_{\H}\  e^{\nu\lambda t}\bigg\} 
			+C_{\mathrm{ubd}} \int_{- \infty}^{0} \bigg\{ \|\mathfrak{Z}(\tau)\|^{2}_{\H} + N^2 \|\mathfrak{Z}(\tau)\|^{2}_{\mathbb{L}^{4}(\mathcal{O})} + \|\f\|^2_{\H^{-1}(\mathcal{O})}\bigg\}e^{\nu\lambda \tau}  \d \tau,
		\end{align}
		for any $\x\in \D(\vartheta_s\omega)$.
	\end{lemma}
	\begin{proof}
		Let $\mathrm{D}$ be a random set from the class $\mathfrak{DK}$. Let $\kappa_{\mathrm{D}}(\omega)$ be the radius of $\mathrm{D}(\omega)$, that is, $\kappa_{\mathrm{D}}(\omega):= \sup\{\|x\|_{\H} : x \in \mathrm{D}(\omega)\}$ for $\omega\in \Omega$. Let $\omega\in\Omega$ be fixed. Since $\kappa_{\mathrm{D}}(\omega)\in \mathfrak{K}$, there exists $t_{\mathrm{D}}(\omega)\geq 1$ such that 
		\begin{align}\label{UTE-TRV}
			& [\kappa_{\mathrm{D}}(\vartheta_{-t}\omega)]^2 e^{-\nu\lambda t } \leq 1, 
		\end{align}
		for $t\geq t_{\mathrm{D}}(\omega).$ 	From \eqref{Energy_esti1}, for $t=\xi\in[-1,0]$ and $s\leq - t_{\D}(\omega)$, we obtain
		\begin{align}\label{UTE1}
			& e^{\nu\lambda \xi}	\|\v(\xi,\omega;s,\x-\mathfrak{Z}(s))\|^2_{\H}  + \frac{\nu}{2} \int_{s}^{0}e^{\nu\lambda \tau} \|\v(\tau,\omega;s,\x-\mathfrak{Z}(s))\|^{2}_{\V}  \d \tau
			\nonumber\\ & \leq \|\x-\mathfrak{Z}(s)\|^2_{\H}\  e^{\nu\lambda s}  
			+ C\int_{s}^{\xi} \bigg[\|\mathfrak{Z}(\tau)\|^{2}_{\H}+ N^2\|\mathfrak{Z}(\tau)\|^{2}_{\mathbb{L}^{4}(\mathcal{O})} + \|\f\|^2_{\H^{-1}(\mathcal{O})}\bigg] e^{\nu\lambda \tau} \d \tau
			\nonumber\\ & 
			\leq 2\|\x\|^2_{\H} e^{\nu\lambda  s}   +  2 \|\mathfrak{Z}(s)\|^2_{\H}e^{\nu\lambda  s}  
			+C_{\mathrm{ubd}} \int_{- \infty}^{0} \bigg\{ \|\mathfrak{Z}(\tau)\|^{2}_{\H}+ N^2\|\mathfrak{Z}(\tau)\|^{2}_{\mathbb{L}^{4}(\mathcal{O})} + \|\f\|^2_{\H^{-1}(\mathcal{O})} \bigg\}e^{\nu\lambda  \tau}  \d \tau
			\nonumber\\ &  \leq
			2+ 2 \sup_{s\leq 0}\bigg\{ \|\mathfrak{Z}(s)\|^2_{\H}\  e^{\nu\lambda  s}\bigg\} 
			+C_{\mathrm{ubd}} \int_{- \infty}^{0} \bigg\{ \|\mathfrak{Z}(\tau)\|^{2}_{\H}+ N^2\|\mathfrak{Z}(\tau)\|^{2}_{\mathbb{L}^{4}(\mathcal{O})} + \|\f\|^2_{\H^{-1}(\mathcal{O})} \bigg\}e^{\nu\lambda  \tau}  \d \tau,
		\end{align}
		where we have used \eqref{UTE-TRV}. Note that, in view of Lemmas \ref{Bddns4} and \ref{Bddns5}, the right hand side of \eqref{UTE1} is finite. This completes the proof.
	\end{proof}

	The following result is used to obtain the uniform-tail estimates for solutions of system \eqref{cgmnse-with-Pressure} (see Lemma \ref{LR} below).
	\begin{lemma}\label{D-convege}
		Let Hypotheses \ref{assump1} and \ref{assumpO} be satisfied and let us denote by $\v(t, \mathfrak{Z})\boldsymbol{x},$ the solution of system \eqref{csgmnse}. Assume that $\f\in \H^{-1}(\mathcal{O})$ and $\{\x_{n}\}_{n\in\N}$ be a bounded sequence in $\H$. Then, there exists $\tilde{\v}\in \mathrm{L}^{\infty}(0, T; \H)\cap\mathrm{L}^{2}(0, T; \V)$  such that along a subsequence
		\begin{align}\label{D-convege-4}
			\v(t, \mathfrak{Z})\boldsymbol{x}_n \ \to \tilde{\v} \ \text{	in	} \ \mathrm{L}^{2}(0,T;\L^2_{\mathrm{loc}}(\mathcal{O})),
		\end{align}
		for every $T>0$.
	\end{lemma}
	\begin{proof}
		Let $\v_n(\cdot)= \v(\cdot, \mathfrak{Z})\boldsymbol{x}_n$. Since $\{\x_{n}\}_{n\in\N}$ is a bounded sequence in $\H$, the sequence
		\begin{align}\label{Bounded}
			\{\v_n\}_{n\in \N} \ \text{ is bounded in }\  \mathrm{L}^{\infty}(0, T; \H)\cap\mathrm{L}^{2}(0, T; \V).
		\end{align}
		Hence, there exists a subsequence $\{\v_{n'}\}_{n'\in \N}$ of $\{\v_n\}_{n\in \N}$ and  $$\widetilde{\v}\in \mathrm{L}^{\infty}(0, T; \H)\cap\mathrm{L}^{2}(0, T; \V),$$ such that, as $n' \to \infty$ (by the Banach-Alaoglu theorem)
		\begin{align}\label{lim1}
			\begin{cases}
				\v_{n'} \xrightharpoonup{w^*} \widetilde{\v} \ \text{  in } \ \mathrm{L}^{\infty}(0, T; \H),\\ \v_{n'} \xrightharpoonup{w} \widetilde{\v} \ \text{  in }\ \mathrm{L}^{2}(0, T; \V).
			\end{cases}
		\end{align}		
		From Theorem \ref{solution}, we have $\big\|\frac{\d\v_{n}}{\d t}\big\|_{\mathrm{L}^{2}(0, T; \V')} \leq C,$ for some $C>0$ and all $n\in \N.$ Therefore, by the Cauchy-Schwarz inequality, for all $0\leq t \leq t+a \leq T$, $n\in \N$ and $\boldsymbol{\varphi}\in \V$, we obtain
		\begin{align}\label{335} 
			|(\v_n(t+a)-\v_n(t), \boldsymbol{\varphi})|\leq \int_{t}^{t+a}\bigg|\bigg\langle\frac{\d\v_{n}(s)}{\d t} , \boldsymbol{\varphi} \bigg\rangle\bigg|\d s\leq C(T) a^{\frac{1}{2}} \|\boldsymbol{\varphi}\|_{\V}.
		\end{align}
		Since $\v_n(t+a)-\v_n(t)\in\V$, for a.e. $t\in(0, T)$, choosing $\boldsymbol{\varphi}=\v_n(t+a)-\v_n(t)$ in \eqref{335}, we obtain 
		\begin{align}
			\|\v_n(t+a)-\v_n(t)\|^{2}_{\H}\leq C(T)a^{\frac{1}{2}}\|\v_n(t+a)-\v_n(t)\|_{\V}.
		\end{align}
		Integrating from $0$ to $T-a$, we further find 
		\begin{align}
			\int_{0}^{T-a}	\|\v_n(t+a)-\v_n(t)\|_{\H}^2\d t
			&\leq C(T)a^{\frac{1}{2}}\int_{0}^{T-a}\|\v_n(t+a)-\v_n(t)\|_{\V}\d t\nonumber\\&\leq C(T)a^{\frac{1}{2}} (T-a)^{\frac{1}{2}}\left(\int_{0}^{T-a}\|\v_n(t+a)-\v_n(t)\|_{\V}^2\d t\right)^{1/2}
			\nonumber\\&\leq \widetilde{C}(T)a^{\frac{1}{2}},
		\end{align}
		where we have used H\"older's inequality and \eqref{Bounded}. Also, $\widetilde{C}(T)$ is an another positive constant independent of $n$. Furthermore, we have 
		\begin{align}\label{3p43}
			\lim_{a\to 0}\sup_{n\in\mathbb{N}}	\int_{0}^{T-a}	\|\v_n(t+a)-\v_n(t)\|_{\H}^2\d t=0.
		\end{align}
		
		Let us now consider a truncation function $\varrho\in\C^1(\R^+)$ with $\varrho(s)=1$ for $s\in[0,1]$ and $\varrho(s)=0$ for $s\in[4,\infty)$. For each $k>0$, let us define $$\v_{n,k}(x):=\varrho\left(\frac{|x|^2}{k^2}\right)\v_n(x), \ \text{ for }x\in\O_{2k},$$
		where $\O_k= \{x\in\O: |x|< k\}$. It can easily be seen from \eqref{3p43} that 
		\begin{align}
			\lim_{a\to 0}\sup_{n\in\mathbb{N}}	\int_{0}^{T-a}	\|\v_{n,k}(t+a)-\v_{n,k}(t)\|_{\L^2(\O_{2k})}^2\d t=0,
		\end{align}
		for all $T,k>0$. Moreover, from \eqref{Bounded}, for all $T,k>0$, we infer 
		\begin{align}
			&  \{\v_{n,k}\}_{n\in\N}\ \text{ is bounded in }\ \mathrm{L}^{\infty}(0 , T;\L^2(\O_{2k}))\cap\mathrm{L}^2(0 , T;\H^1_0(\O_{2k})).
		\end{align}
		Since the injection $\H_0^1(\O_{2k})\subset\L^2{(\O_{2k})}$ is compact, we can apply  \cite[Theorem 16.3]{MMR} (see \cite[Theorem 13.3]{Te1} also) to obtain 
		\begin{align}\label{3.47}
			\{\v_{n,k}\}_{n\in\N}\ \text{ is relatively compact in } \ \mathrm{L}^{2}(0, T;\L^2(\O_{2k})),
		\end{align}
		for all $T,k>0$. From \eqref{3.47}, we further infer 
		\begin{align}\label{3.48}
			\{\v_{n}\}_{n\in\N}\ \text{ is relatively compact in } \ \mathrm{L}^{2}(0 , T;\L^2(\O_{k})),
		\end{align}
		for all $T,k>0$. 
		Using \eqref{lim1} and \eqref{3.48}, we can extract a subsequence of $\{\v_n\}_{n\in\N}$ (not relabeling) such that 
		\begin{align*}
			\v_n \to \tilde{\v} \ \text{ in } \mathrm{L}^{2} (0,  T;\L^{2}(\O_k)), 
		\end{align*}
		for all $T,k>0$, which completes the proof.
	\end{proof}

	Next, we show that the solution to the system \eqref{cgmnse-with-Pressure} satisfies the uniform-tail estimates which will help us to establish the $\mathfrak{DK}$-asymptotic compactness of $\Phi$ on unbounded domains. 
	
	Let $\Psi$ be a smooth function such that $0\leq\Psi(\xi)\leq 1$ for $\xi\in\R$ and
	\begin{align}\label{Cut-off}
		\Psi(\xi)=\begin{cases*}
			0, \text{ for } |\xi|\leq 1,\\
			1, \text{ for } |\xi|\geq2.
		\end{cases*}
	\end{align}
	Then, there exists a positive constant $C^{\ast}$ such that $|\Psi'(\xi)|\leq C^{\ast}$ and $|\Psi''(\xi)|\leq C^{\ast}$ for all $\xi\in\R$.

	\begin{lemma}\label{LR}
		Let Hypotheses \ref{assump1} and \ref{assumpO} be satisfied,  $\f\in\L^2(\O)$, $\omega\in\Omega$ and $\D \in\mathfrak{DK}$. Then, for each $\varepsilon>0$ and for each $\xi\in[-1,0]$, there exists $k_0:=k_0(\varepsilon,\xi,\omega,\D)\in\N$  such that  the solution $\v$ of system \eqref{cgmnse-with-Pressure} satisfies
		\begin{align}
			\left\|\v(\xi,\omega;s,\x-\mathfrak{Z}(s))\right\|^2_{\L^2(\O^{c}_{k})}\leq\varepsilon, 
		\end{align}
		for all $\x\in \D(\vartheta_{s}\omega)$, for all $s\leq - t_{\D}(\omega)$ and for all $k\geq k_0$,	where  $\O_{k}=\{x\in\O:\,|x|< k\}$, $\O^{c}_{k}=\O\backslash\O_{k}$
		and $t_{\D}(\omega)$ is the same time given in Lemma \ref{D-abH}.
	\end{lemma}
	
	\begin{proof} 
		Taking the divergence of the first equation of \eqref{cgmnse-with-Pressure},   we obtain
		\begin{align*}
			-\Delta \widehat{p}
			 & = F_{N}(\|\v+\mathfrak{Z}\|_{\L^4(\mathcal{O})})\nabla\cdot \big[ \big((\v+\mathfrak{Z})\cdot\nabla\big)(\v+\mathfrak{Z}) \big] - \nabla\cdot\f
			 \nonumber\\ & = F_{N}(\|\v+\mathfrak{Z}\|_{\L^4(\mathcal{O})}) \nabla\cdot\left[\nabla\cdot \left\{ \big((\v+\mathfrak{Z})\otimes(\v+\mathfrak{Z})\big)  \right\}\right] -\nabla\cdot\f,
		\end{align*}
		in the weak sense, which implies that
		\begin{align}\label{p-value}
			\widehat{p}=(-\Delta)^{-1}\left[F_N(\|\v+\mathfrak{Z}\|_{\L^4(\O)}) \sum_{i,j=1}^{3}\frac{\partial^2}{\partial x_i\partial x_j}\big((v_i+\mathfrak{Z}_i )(v_j+\mathfrak{Z}_j)\big)-\nabla\cdot\f\right].
		\end{align}
		It follows from \eqref{p-value} that
		\begin{align}
			&\|\widehat{p}\|_{\mathrm{L}^2(\mathcal{O})}\nonumber\\&=F_N(\|\v+\mathfrak{Z}\|_{\L^4(\O)})\left\|\left[\sum_{i,j=1}^{3}\frac{\partial^2}{\partial x_i\partial x_j}(-\Delta)^{-1}\big((v_i+\mathfrak{Z}_i )(v_j+\mathfrak{Z}_j)\big)\right]\right\|_{\mathrm{L}^2(\mathcal{O})} +\|(-\Delta)^{-1}(\nabla\f)\|_{\mathrm{L}^2(\mathcal{O})}\nonumber\\  &\leq CF_N(\|\v+\mathfrak{Z}\|_{\L^4(\O)})\left\|\sum_{i,j=1}^{3}(-\Delta)^{-1}\big((v_i+\mathfrak{Z}_i )(v_j+\mathfrak{Z}_j )\big)
			\right\|_{\mathrm{H}^{2}(\mathcal{O})}
			 +C\|\nabla\f\|_{\mathrm{H}^{-2}(\mathcal{O})}\nonumber\\  &\leq CF_N(\|\v+\mathfrak{Z}\|_{\L^4(\O)}) \left\|\Delta\sum_{i,j=1}^{3}(-\Delta)^{-1}\big((v_i+\mathfrak{Z}_i)(v_j+\mathfrak{Z}_j )\big)
			\right\|_{\mathrm{L}^{2}(\mathcal{O})} +C\|\f\|_{\mathbb{H}^{-1}(\mathcal{O})}
			\nonumber\\  &\leq C F_N(\|\v+\mathfrak{Z}\|_{\L^4(\O)}) \|\v+\mathfrak{Z}\|^2_{\L^4(\mathcal{O})}+C\|\f\|_{\mathbb{L}^2(\mathcal{O})}
			\nonumber\\  &\leq C N  \|\v+\mathfrak{Z}\|_{\L^4(\O)}+C\|\f\|_{\mathbb{L}^2(\mathcal{O})}
			\nonumber\\  &\leq C N  (\|\v\|_{\L^4(\O)} +\|\mathfrak{Z}\|_{\L^4(\O)} )+C\|\f\|_{\mathbb{L}^2(\mathcal{O})}
			\nonumber\\  &\leq C N  (\|\v\|_{\V} +\|\mathfrak{Z}\|_{\L^4(\O)} )+C\|\f\|_{\mathbb{L}^2(\mathcal{O})},\label{p-value-N}
		\end{align}
		where we have used the elliptic regularity for Poincar\'e domains with uniformly smooth boundary of class $\mathrm{C}^3$ (cf. Lemma 1, \cite{Heywood}), Ladyzhenskaya's inequality, \eqref{FN1} and \eqref{poin}.

		Taking the inner product of the first equation of \eqref{cgmnse-with-Pressure} with $\Psi^2\left(\frac{|x|^2}{k^{2}}\right)\v$ in $\mathbb{L}^2(\O)$, we have
		\begin{align}\label{ep1}
			& \frac{1}{2} \frac{\d}{\d t}\int_{\O} |\v|^2 \Psi^2\left(\frac{|x|^2}{k^{2}}\right)\d x 
			\nonumber\\& = \underbrace{\nu\int_{\O}(\Delta\v) \Psi^2\left(\frac{|x|^2}{k^{2}}\right) \v \d x}_{I_{1}(k,t)} -\underbrace{F_{N}(\|\v+\mathfrak{Z}\|_{\L^4(\mathcal{O})}) b\left(\v+\mathfrak{Z},\v,\Psi^2\left(\frac{|x|^2}{k^{2}}\right)\v)\right)}_{I_{2}(k,t)} 
			\nonumber\\ & \quad + \underbrace{ F_{N}(\|\v+\mathfrak{Z}\|_{\L^4(\mathcal{O})}) b\left(\v+\mathfrak{Z},\mathfrak{Z},\Psi^2\left(\frac{|x|^2}{k^{2}}\right)\v\right)}_{I_{3}(k,t)}  - \underbrace{ \int_{\O}(\nabla \widehat{p})\Psi^2\left(\frac{|x|^2}{k^{2}}\right)\v\d x}_{I_{4}(k,t)} 
			\nonumber\\ & \quad + \underbrace{\chi\int_{\O}\mathfrak{Z}\Psi^2\left(\frac{|x|^2}{k^{2}}\right)\v\d x}_{I_{5}(k,t)}
			+ \underbrace{\int_{\O}\f\Psi^2\left(\frac{|x|^2}{k^{2}}\right)\v\d x}_{I_{6}(k,t)}.
		\end{align}	
		Let us now estimate each term on the right hand side of \eqref{ep1}. Integration by parts, divergence free condition of $\v$, and H\"older's, Ladyzhenskaya's (see \eqref{lady}) and Young's inequalities help us to obtain
		\begin{align}
			I_{1}(k,t)&= -\nu \int_{\mathcal{O}}\left|\nabla\left(\Psi\left(\frac{|x|^2}{k^2}\right) \v\right)\right|^2  \d x+\nu \int_{\mathcal{O}}\v\nabla\left(\Psi\left(\frac{|x|^2}{k^2}\right)\right)\nabla\left(\Psi\left(\frac{|x|^2}{k^2}\right) \v \right) \d x \nonumber\\&\quad-\nu \int_{\mathcal{O}}\nabla\v \nabla\left(\Psi\left(\frac{|x|^2}{k^2}\right)\right)\Psi\left(\frac{|x|^2}{k^2}\right) \v  \d x\nonumber\\&\leq-\nu \int_{\mathcal{O}}\left|\nabla\left(\Psi\left(\frac{|x|^2}{k^2}\right) \v\right)\right|^2\d x+\frac{\nu}{8} \int_{\mathcal{O}}\left|\nabla\left(\Psi\left(\frac{|x|^2}{k^2}\right) \v\right)\right|^2\d x\nonumber\\&\quad+\frac{\nu\lambda}{8} \int_{\mathcal{O}}\left|\left(\Psi\left(\frac{|x|^2}{k^2}\right) \v\right)\right|^2\d x+\frac{C}{k}\left[\|\v\|^2_{\H}+\|\v\|^2_{\V}\right]\nonumber\\&\leq-\frac{3\nu\lambda}{4} \int_{\mathcal{O}}\left|\left(\Psi\left(\frac{|x|^2}{k^2}\right) \v\right)\right|^2\d x+\frac{C}{k}\|\v\|^2_{\V},\label{ep2}
			\\  
			-I_{2}(k,t)&=F_{N}(\|\v+\mathfrak{Z}\|_{\L^4(\mathcal{O})})\left|\frac{2}{k^2}\int_{\O} \Psi\left(\frac{|x|^2}{k^{2}}\right)\Psi^{\prime}\left(\frac{|x|^2}{k^{2}}\right)x\cdot(\v+\mathfrak{Z}) |\v|^2 \d x\right|\nonumber\\&= F_{N}(\|\v+\mathfrak{Z}\|_{\L^4(\mathcal{O})}) \left|\frac{2}{k^2} \int\limits_{\O\cap\{k\leq|x|\leq \sqrt{2}k\}}\Psi\left(\frac{|x|^2}{k^{2}}\right) \Psi^{\prime}\left(\frac{|x|^2}{k^{2}}\right)x\cdot(\v+\mathfrak{Z}) |\v|^2 \d x\right|
			\nonumber\\&\leq F_{N}(\|\v+\mathfrak{Z}\|_{\L^4(\mathcal{O})}) \frac{2\sqrt{2}}{k} \int\limits_{\O\cap\{k\leq|x|\leq \sqrt{2}k\}} \left|\Psi^{\prime}\left(\frac{|x|^2}{k^{2}}\right)\right| |\v+\mathfrak{Z}||\v|^2 \d x \nonumber\\& \leq \frac{C}{k} F_{N}(\|\v+\mathfrak{Z}\|_{\L^4(\mathcal{O})}) \|\v+\mathfrak{Z}\|_{\L^4(\O)} \|\v\|^2_{\L^{\frac83}(\O)} 
			\nonumber\\ & \leq \frac{CN}{k}  \|\v\|^2_{\V}
			 \leq \frac{C}{k} \bigg[ (1+N^2)\|\v\|_{\V}^2 \bigg],\label{ep3}
			\\
			I_{3}(k,t) & \leq \frac{4}{k^2}F_{N}(\|\v+\mathfrak{Z}\|_{\L^4(\mathcal{O})})\int_{\O} |\v+\mathfrak{Z}||\mathfrak{Z}|\Psi\left(\frac{|x|^2}{k^{2}}\right)\Psi^{\prime}\left(\frac{|x|^2}{k^{2}}\right)|x||\v| \d x 
			\nonumber\\ & \quad + F_{N}(\|\v+\mathfrak{Z}\|_{\L^4(\mathcal{O})})\int_{\O} |\v+\mathfrak{Z}||\nabla\v|\Psi^2\left(\frac{|x|^2}{k^{2}}\right)|\mathfrak{Z}| \d x  
			\nonumber\\  & \leq \frac{C}{k}F_{N}(\|\v+\mathfrak{Z}\|_{\L^4(\mathcal{O})}) \|\v+\mathfrak{Z}\|_{\L^4(\mathcal{O})} \|\mathfrak{Z}\|_{\L^4(\mathcal{O})}\|\v\|_{\H}  
			\nonumber\\ & \quad + F_N(\|\v+\mathfrak{Z}\|_{\L^4(\mathcal{O})})\|\v+\mathfrak{Z}\|_{\L^4(\mathcal{O})}\|\v\|_{\V}\left(\int_{\O}\Psi^8\left(\frac{|x|^2}{k^{2}}\right)|\mathfrak{Z}|^4\d x\right)^{\frac14}
			\nonumber\\  & \leq \frac{CN}{k}  \|\mathfrak{Z}\|_{\L^4(\mathcal{O})}\|\v\|_{\V}  
			+ N \|\v\|_{\V}\left(\int_{\O}\Psi^8\left(\frac{|x|^2}{k^{2}}\right)|\mathfrak{Z}|^4\d x\right)^{\frac14}
			\nonumber\\  & \leq \frac{C}{k} \left[ N^2 \|\mathfrak{Z}\|^2_{\L^4(\mathcal{O})} + \|\v\|^2_{\V} \right]  
			+ N \|\v\|_{\V}\left(\int_{\O}\Psi^8\left(\frac{|x|^2}{k^{2}}\right)|\mathfrak{Z}|^4\d x\right)^{\frac14},\label{ep4}
			\\
			-I_4(k,t)& =2\int_{\mathcal{O}}\widehat{p} \; \Psi\left(\frac{|x|^2}{k^2}\right)\Psi^{\prime}\left(\frac{|x|^2}{k^2}\right)\frac{2}{k^2}(x\cdot\v)\d x\nonumber\\&\leq\frac{C}{k} \int\limits_{\mathcal{O}\cap\{k\leq|x|\leq \sqrt{2}k\}}\left|\widehat{p}\right|\left|\v\right|\d x
			\leq \frac{C}{k}\bigg[\|\widehat{p}\|_{\mathrm{L}^2(\mathcal{O})}\|\v\|_{\H}\bigg] 
			\nonumber\\& \leq \frac{C}{k}\bigg[\|\widehat{p}\|_{\mathrm{L}^2(\mathcal{O})}\|\v\|_{\V}\bigg]
			\leq \frac{C}{k}\bigg[\|\widehat{p}\|^2_{\mathrm{L}^2(\mathcal{O})}+\|\v\|^2_{\V}\bigg]
			\nonumber\\ & \leq \frac{C}{k}\bigg[ (1+N^2) \|\v\|^2_{\V} + N^2 \|\mathfrak{Z}\|^2_{\L^4(\O)} +\|\f\|^2_{\mathbb{L}^2(\mathcal{O})}\bigg],\label{ep8}
			\\
			I_{5}(k,t)&\leq \frac{\nu\lambda}{8}\int_{\O}\Psi^2\left(\frac{|x|^2}{k^{2}}\right)|\v|^2\d x + C\int_{\O}\Psi^2\left(\frac{|x|^2}{k^{2}}\right)|\f|^2\d x, \label{ep9}
			\\
			I_{6}(k,t)&\leq \frac{\nu\lambda}{8}\int_{\O}\Psi^2\left(\frac{|x|^2}{k^{2}}\right)|\v|^2\d x + C\int_{\O}\Psi^2\left(\frac{|x|^2}{k^{2}}\right)|\mathfrak{Z}|^2\d x.\label{ep10}
		\end{align}
		Combining \eqref{ep1}-\eqref{ep10}, we arrive at
		\begin{align*}
			&	\frac{1}{2} \frac{\d}{\d t}\int_{\O}\Psi^2\left(\frac{|x|^2}{k^{2}}\right)|\v|^2\d x  +  \frac{\nu\lambda}{2} \int_{\O} \Psi^2\left(\frac{|x|^2}{k^{2}}\right)|\v|^2   \d x
			\nonumber\\ & \leq   \frac{C}{k}\bigg[ (1+N^2) \|\v\|^2_{\V} + N^2 \|\mathfrak{Z}\|^2_{\L^4(\O)} +C\|\f\|^2_{\mathbb{L}^2(\mathcal{O})}\bigg]  + N \|\v\|_{\V}\left(\int_{\O}\Psi^8\left(\frac{|x|^2}{k^{2}}\right)|\mathfrak{Z}|^4\d x\right)^{\frac14} 
			\nonumber\\ & \quad + C\int_{\O}\Psi^2\left(\frac{|x|^2}{k^{2}}\right)|\f|^2\d x+ C\int_{\O}\Psi^2\left(\frac{|x|^2}{k^{2}}\right)|\mathfrak{Z}|^2\d x,
		\end{align*}
		which implies 
		\begin{align}\label{ep11}
			&	 \frac{\d}{\d t}\int_{\O}\Psi^2\left(\frac{|x|^2}{k^{2}}\right)|\v|^2\d x + \nu\lambda   \int_{\O} \Psi^2\left(\frac{|x|^2}{k^{2}}\right)|\v|^2   \d x  
			\nonumber\\ & \leq   C\int_{\O\cap\{|x|\geq k\}}|\f|^2\d x + C\int_{\O\cap\{|x|\geq k\}}|\mathfrak{Z}|^2\d x + 2N \|\v\|_{\V} \left(\int_{\O\cap\{|x|\geq k\}}|\mathfrak{Z}|^4\d x\right)^{\frac14}
			\nonumber\\ & \quad + \frac{C}{k}\bigg[ (1+N^2) \|\v\|^2_{\V} + N^2 \|\mathfrak{Z}\|^2_{\L^4(\O)} + \|\f\|^2_{\mathbb{L}^2(\mathcal{O})}\bigg] .
		\end{align}
		An application of variation of constants formula to \eqref{ep8} gives for all $\omega\in\Omega$, for all $\xi\in[-1,0]$ and for all $s\leq -  t_{\D}(\omega)$ 
		\begin{align}\label{ep12}
			&e^{-\nu\lambda }	\int_{\O}\Psi^2\left(\frac{|x|^2}{k^{2}}\right)|\v(\xi,\omega;s,\x-\mathfrak{Z}(s))|^2\d x 
			\nonumber\\&\leq e^{\nu\lambda s} \int_{\O}\Psi^2\left(\frac{|x|^2}{k^{2}}\right)|\x(x)-\mathfrak{Z}(x,s)|^2\d x + C\int_{s}^{0} e^{\nu\lambda  \tau}\int_{\O\cap\{|x|\geq k\}}|\f(x)|^2\d x \d \tau 
			\nonumber\\ & \quad + C\int_{s}^{0} e^{\nu\lambda \tau}\int_{\O\cap\{|x|\geq k\}}|\mathfrak{Z}(x,\tau)|^2\d x \d \tau + \frac{C}{k}\int_{s}^{0} e^{\nu\lambda  \tau}\bigg[ (1+N^2)\|\v(\tau,\omega;s,\x-\mathfrak{Z}(s))\|^2_{\V} \nonumber\\& \qquad + N^2 \|\mathfrak{Z}(\tau)\|^2_{\mathbb{L}^{4}(\O)} + \|\f\|^2_{\L^2(\O)} 
			\bigg] \d \tau
			\nonumber\\ & \qquad + 2N \int_{s}^{0} e^{\nu\lambda  \tau} \|\v(\tau,\omega;s,\x-\mathfrak{Z}(s))\|_{\V} \left(\int_{\O\cap\{|x|\geq k\}}|\mathfrak{Z}(x,\tau)|^4\d x\right)^{\frac14} \d \tau
			\nonumber\\&\leq e^{\nu\lambda s}\int_{\O\cap\{|x|\geq k\}}|\x(x)-\mathfrak{Z}(x,s)|^2\d x + \frac{C(1+N^2)}{k}  + C\int_{s}^{0} e^{\nu\lambda \tau}\int_{\O\cap\{|x|\geq k\}}|\f(x)|^2\d x \d \tau 
			\nonumber\\ & \quad + C\int_{s}^{0} e^{\nu\lambda \tau}\int_{\O\cap\{|x|\geq k\}}|\mathfrak{Z}(x,\tau)|^2\d x \d \tau 
			 + C N \left(\int_{s}^{0} e^{\nu\lambda  \tau}  \left(\int_{\O\cap\{|x|\geq k\}}|\mathfrak{Z}(x,\tau)|^4\d x\right)^{\frac12} \d\tau \right)^{\frac12} 
			\nonumber\\&\to 0\text{ as } k\to\infty,
		\end{align}
		where we have used the finiteness of integral obtained in \eqref{S1-ubd} and Lemmas \ref{Bddns4}-\ref{Bddns5}. Hence, from \eqref{ep12}, we conclude that for any $\varepsilon>0$, for any  $\omega\in\Omega$ and for any $\xi\in[-1,0]$, there exists a $k_0=k_0(\varepsilon,\xi, \omega)\in\N$ such that 
		\begin{align*}
			&	\int_{\O\cap\{|x|\geq k\}}|\v(\xi,\omega;s,\x-\mathfrak{Z}(s))|^2 \d x \leq \varepsilon,
		\end{align*}
		for all $k\geq k_0$ and $s\leq -t_{\D}(\omega)$. This completes the proof.
	\end{proof}

	\begin{theorem}\label{Main_theorem_1}
		Suppose that  Assumptions \ref{assump1} (for RKHS) and \ref{assumpO} (for domain $\mathcal{O}$) are satisfied, and $\f\in\L^2(\O)$. Consider the MDS, $\Im = (\Omega, \mathcal{F}, \mathbb{P}, \vartheta)$ from Proposition \ref{m-DS1}, and the RDS $\Phi$ on $\H$ over $\Im$ generated by the SGMNS equations \eqref{S-GMNSE} with additive noise satisfying Assumption \ref{assump1}. Then, there exists a unique random $\mathfrak{DK}$-attractor for continuous RDS $\Phi$ in $\H$.
	\end{theorem}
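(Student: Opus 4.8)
The plan is to deduce the statement from the abstract criterion \cite[Theorem 2.8]{BCLLLR}: it suffices to exhibit a closed and bounded $\mathfrak{DK}$-absorbing random set for $\Phi$ and to prove that $\Phi$ is $\mathfrak{DK}$-asymptotically compact, since the RDS $(\Phi,\vartheta)$ constructed in Section~\ref{sec5} and its continuity in the data (hence the required measurability), proved in Theorem~\ref{RDS_Conti1}, are already in place, and uniqueness of the attractor is part of that theorem. Throughout I would fix $\omega\in\Omega$ and, since $\Phi$ does not depend on $\chi$ (Proposition~\ref{alpha_ind}), keep $\chi\ge0$ arbitrary.

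\textbf{Step 1 (a $\mathfrak{DK}$-absorbing set).} Using \eqref{combine_sol}, write $\Phi(t,\vartheta_{-t}\omega)\x=\v\big(0,-t;\omega,\x-\mathfrak{Z}(\omega)(-t)\big)+\mathfrak{Z}(\omega)(0)$, apply the energy estimate \eqref{Energy_esti1} of Lemma~\ref{RA1} over $[-t,0]$, and combine it with the stationarity identity \eqref{stationary}, $\mathfrak{Z}(\vartheta_{-t}\omega)(s)=\mathfrak{Z}(\omega)(s-t)$, a change of variable, and the elementary bound $\|\x-\mathfrak{Z}(\omega)(-t)\|_{\H}^2\le 2\|\x\|_{\H}^2+2\|\mathfrak{Z}(\omega)(-t)\|_{\H}^2$. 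For $\x\in D(\vartheta_{-t}\omega)$ with $D\in\mathfrak{DK}$ this bounds $\|\Phi(t,\vartheta_{-t}\omega)\x\|_{\H}^2$ by a term of order $\big([\kappa(D(\vartheta_{-t}\omega))]^2+\|\mathfrak{Z}(\omega)(-t)\|_{\H}^2\big)e^{-\nu\lambda t}$ plus $C\int_{-\infty}^{0}\big(\|\mathfrak{Z}(\omega)(s)\|_{\H}^2+N^2\|\mathfrak{Z}(\omega)(s)\|_{\L^4(\mathcal O)}^2+\|\f\|_{\H^{-1}(\mathcal O)}^2\big)e^{\nu\lambda s}\,\d s+2\|\mathfrak{Z}(\omega)(0)\|_{\H}^2$. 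The first part vanishes as $t\to\infty$ because $\kappa(D(\cdot))\in\mathfrak{K}$ and by Lemma~\ref{Bddns4}; the remaining part is finite by Lemma~\ref{Bddns5} and, viewed as a function of $\omega$, belongs to $\mathfrak{K}$ by Proposition~\ref{radius} and the stability of $\mathfrak{K}$ under addition. Hence, taking $R(\omega)^2$ to be that $\mathfrak{K}$-bound, the ball $B(\omega)=\{x\in\H:\|x\|_{\H}\le R(\omega)\}$ is a closed bounded random set in $\mathfrak{DK}$ which $\mathfrak{DK}$-absorbs.

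\textbf{Step 2 (asymptotic compactness via the energy equation).} Let $t_n\uparrow\infty$, $\x_n\in D(\vartheta_{-t_n}\omega)$ with $D\in\mathfrak{DK}$, and set $\v_n(s):=\v(s,-t_n;\omega,\x_n-\mathfrak{Z}(\omega)(-t_n))$ for $s\in[-t_n,0]$, so that $\Phi(t_n,\vartheta_{-t_n}\omega)\x_n=\v_n(0)+\mathfrak{Z}(\omega)(0)$. By Step~1 the sequence $\{\v_n(0)\}$ is bounded in $\H$; moreover, for each fixed $T\in\N$, applying the absorption property over $\vartheta_{-T}\omega$ (note $\vartheta_{-t_n}\omega=\vartheta_{-(t_n-T)}\vartheta_{-T}\omega$ and $\x_n\in D(\vartheta_{-t_n}\omega)$) shows that $\{\v_n(-T)\}_{n:\,t_n>T}$ is bounded in $\H$ and that $\|\v_n(-T)\|_{\H}^2e^{-\nu\lambda T}\le c(T)$ for all large $n$, where $c(T)\to0$ as $T\to\infty$, again by $R\in\mathfrak{K}$ and Lemma~\ref{Bddns4}. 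A diagonal extraction yields one subsequence (not relabelled) along which $\v_n(0)\rightharpoonup\z_0$ in $\H$ and $\v_n(-T)\rightharpoonup\z_{-T}$ in $\H$ for every $T\in\N$. After a time-shift, Lemma~\ref{weak_topo1} gives $\v_n\rightharpoonup\v^T$ weakly in $\mathrm L^2(-T,0;\V)$ and Lemma~\ref{weak_topo2} gives $(\v_n(s),\phi)\to(\v^T(s),\phi)$ uniformly on $[-T,0]$ for every $\phi\in\H$, where $\v^T$ is the unique solution of \eqref{csgmnse} on $[-T,0]$ with $\v^T(-T)=\z_{-T}$; taking $s=0$ identifies $\v^T(0)=\z_0$ for every $T$. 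Now apply the energy equality \eqref{Energy_esti2} with $t=0$, $\tau=-T$:
\[
\|\v_n(0)\|_{\H}^2=\|\v_n(-T)\|_{\H}^2e^{-\nu\lambda T}+2\int_{-T}^{0}e^{\nu\lambda s}\Big[-\langle\B_N(\v_n(s)+\mathfrak{Z}(s)),\v_n(s)\rangle+\chi(\mathfrak{Z}(s),\v_n(s))+\langle\f,\v_n(s)\rangle-\langle\!\langle\v_n(s)\rangle\!\rangle^2\Big]\d s.
\]
Passing to the upper limit in $n$: the tail term is $\le c(T)$; the term $-\int_{-T}^{0}e^{\nu\lambda s}\langle\!\langle\v_n(s)\rangle\!\rangle^2\d s$ is controlled by the weak lower semicontinuity of the weighted (equivalent) $\mathrm L^2(-T,0;\V)$-norm squared; the two linear terms pass to the limit by the weak convergence of $\v_n$ in $\mathrm L^2(-T,0;\V)$ tested against the fixed elements $e^{\nu\lambda\cdot}\chi\mathfrak{Z}$ and $e^{\nu\lambda\cdot}\f$; and the nonlinear term converges by the local-compactness argument described in Step~3. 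This yields
\[
\limsup_{n\to\infty}\|\v_n(0)\|_{\H}^2\le c(T)+2\int_{-T}^{0}e^{\nu\lambda s}\Big[-\langle\B_N(\v^T(s)+\mathfrak{Z}(s)),\v^T(s)\rangle+\chi(\mathfrak{Z}(s),\v^T(s))+\langle\f,\v^T(s)\rangle-\langle\!\langle\v^T(s)\rangle\!\rangle^2\Big]\d s,
\]
and the last integral equals, by \eqref{Energy_esti2} applied to $\v^T$, the quantity $\|\v^T(0)\|_{\H}^2-\|\v^T(-T)\|_{\H}^2e^{-\nu\lambda T}\le\|\z_0\|_{\H}^2$. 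Letting $T\to\infty$ gives $\limsup_n\|\v_n(0)\|_{\H}^2\le\|\z_0\|_{\H}^2$, which together with $\|\z_0\|_{\H}\le\liminf_n\|\v_n(0)\|_{\H}$ forces $\v_n(0)\to\z_0$ strongly in $\H$, hence $\Phi(t_n,\vartheta_{-t_n}\omega)\x_n\to\z_0+\mathfrak{Z}(\omega)(0)$ in $\H$; this is $\mathfrak{DK}$-asymptotic compactness.

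\textbf{Step 3 (the nonlinear limit; conclusion; the hard part).} The one genuinely delicate point, besides the uniform-in-$n$ decay of $\|\v_n(-T)\|_{\H}^2e^{-\nu\lambda T}$ exploited above, is the passage to the limit in $\int_{-T}^{0}e^{\nu\lambda s}\langle\B_N(\v_n(s)+\mathfrak{Z}(s)),\v_n(s)\rangle\,\d s$, since on the unbounded domain $\mathcal O$ the embedding $\V\embed\H$ fails to be compact. I would use that $\{\v_n\}$ is bounded in $\mathrm L^{\infty}(-T,0;\H)\cap\mathrm L^2(-T,0;\V)$ with $\{\tfrac{\d\v_n}{\d t}\}$ bounded in $\mathrm L^2(-T,0;\V')$ (the a priori bounds obtained in the proof of Theorem~\ref{solution}); since $\partial\mathcal O$ is uniformly of class $\mathrm C^3$, $\mathcal O$ is exhausted by bounded Lipschitz subdomains on which the Rellich embedding is compact, so the Aubin--Lions lemma yields $\v_n\to\v^T$ strongly in $\mathrm L^2(-T,0;\L^2_{\mathrm{loc}}(\mathcal O))$. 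Writing $\langle\B_N(\v_n+\mathfrak{Z}),\v_n\rangle=-F_N(\|\v_n+\mathfrak{Z}\|_{\L^4(\mathcal O)})\,b(\v_n+\mathfrak{Z},\v_n,\mathfrak{Z})$ and combining this local strong convergence with the uniform bound $\|\v_n+\mathfrak{Z}\|_{\L^4(\mathcal O)}F_N(\|\v_n+\mathfrak{Z}\|_{\L^4(\mathcal O)})\le N$ from \eqref{FN1}, the Lipschitz estimate \eqref{FN2} for $F_N$, and $\mathfrak{Z}\in\mathrm L^2(-T,0;\L^4(\mathcal O))$, one passes to the limit term by term (exactly as in the analogous argument for the Navier--Stokes nonlinearity, cf.\ \cite{BL}). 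With the $\mathfrak{DK}$-absorbing set of Step~1 and the $\mathfrak{DK}$-asymptotic compactness of Step~2, \cite[Theorem 2.8]{BCLLLR} provides the unique random $\mathfrak{DK}$-attractor for $\Phi$ in $\H$, which completes the proof. The crux of the whole argument is therefore Step~2's energy-equation (Ball) method executed on the unbounded domain: the essential ingredients are the energy equality \eqref{Energy_esti2}, the weak-continuity Lemmas~\ref{weak_topo1}--\ref{weak_topo2}, the $\mathfrak{K}$-membership of the absorbing radius, and the local Aubin--Lions compactness that tames the globally modified nonlinearity.
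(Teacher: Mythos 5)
Your overall architecture coincides with the paper's: reduction to \cite[Theorem 2.8]{BCLLLR}, an absorbing set built from the energy estimate \eqref{Energy_esti1} of Lemma \ref{RA1} together with the class $\mathfrak{K}$ (Lemmas \ref{Bddns4}--\ref{Bddns5}, Proposition \ref{radius}), and Ball's energy-equation method with a negative trajectory, the energy identity \eqref{Energy_esti2}, the weak-continuity Lemmas \ref{weak_topo1}--\ref{weak_topo2}, and weak lower semicontinuity of the weighted $\V$-norm. Steps 1 and 2 are essentially the paper's Steps I--VI and are sound.

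The genuine gap is in your Step 3, the passage to the limit in $\int_{-T}^{0}e^{\nu\lambda s}\langle\B_N(\v_n(s)+\mathfrak{Z}(s)),\v_n(s)\rangle\,\d s$. Local Aubin--Lions compactness gives $\v_n\to\v^T$ strongly only in $\mathrm{L}^2(-T,0;\L^2_{\mathrm{loc}}(\mathcal{O}))$, and this is enough for the \emph{unmodified} trilinear form tested against compactly supported functions (as in \cite{BL} and in Appendix \ref{ApA} of this paper). It is \emph{not} enough here, because the modifying factor $F_N(\|\v_n(s)+\mathfrak{Z}(s)\|_{\L^4(\mathcal{O})})$ depends on the $\L^4$-norm over the \emph{whole unbounded domain}: the Lipschitz estimate \eqref{FN2} you invoke controls $|F_N(\|\u\|_{\L^4(\mathcal{O})})-F_N(\|\v\|_{\L^4(\mathcal{O})})|$ by the \emph{global} quantity $\|\u-\v\|_{\L^4(\mathcal{O})}$, which strong convergence on bounded subdomains does not deliver (mass of $\v_n$ may escape to infinity, so even $\|\v_n(s)+\mathfrak{Z}(s)\|_{\L^4(\mathcal{O})}$ need not converge to $\|\v^T(s)+\mathfrak{Z}(s)\|_{\L^4(\mathcal{O})}$). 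This is precisely the obstruction the authors single out as the reason the compactness route of \cite[Remark 3.2, p.~196]{Temam} fails on unbounded domains and a Minty--Browder argument is needed. The paper instead establishes, inside the Minty--Browder identification of Theorem \ref{solution} (re-run in Lemma \ref{weak_topo1}), the weak convergence $\B_N(\v_n+\mathfrak{Z})\xrightharpoonup{w}\B_N(\v^T+\mathfrak{Z})$ in $\mathrm{L}^2(-T,0;\V')$ (see \eqref{BN-w-con}), and then exploits the cancellation $\langle\B_N(\w),\w\rangle=0$ so that $\langle\B_N(\v_n+\mathfrak{Z}),\v_n\rangle=-\langle\B_N(\v_n+\mathfrak{Z}),\mathfrak{Z}\rangle$ is a pairing against the \emph{fixed} function $\mathfrak{Z}$, for which weak convergence suffices (this is \eqref{weak_lim17}). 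To repair your proof you should replace the local-compactness argument of Step 3 by this mechanism (or otherwise justify convergence of the global $\L^4$-norms); as written, the term-by-term limit "exactly as for the Navier--Stokes nonlinearity" does not go through.
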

	\begin{proof}
		Because of \cite[Theorem 2.8]{BCLLLR}, it is only needed to prove that there exists a $\mathfrak{DK}$-absorbing set $\textbf{B}\in \mathfrak{DK}$ and the RDS $\Phi$ is $\mathfrak{DK}$-asymptotically compact. 	
		\vskip 0.2 cm 
		\noindent
		\textbf{Existence of $\mathfrak{DK}$-absorbing set $\textbf{B}\in \mathfrak{DK}$:}	Let $\mathrm{D}$ be a random set from the class $\mathfrak{DK}$. Let $\kappa_{\mathrm{D}}(\omega)$ be the radius of $\mathrm{D}(\omega)$, that is, $\kappa_{\mathrm{D}}(\omega):= \sup\{\|x\|_{\H} : x \in \mathrm{D}(\omega)\}$ for $\omega\in \Omega$.
		
		Let $\omega\in \Omega$ be fixed. For a given $s\leq 0$ and $\boldsymbol{x}\in \H$, let $\v$ be the solution of \eqref{csgmnse} on the time interval $[s, \infty)$ with the initial condition $\v(s)= \boldsymbol{x}-\mathfrak{Z}(s).$ Using \eqref{Energy_esti1} for $t=0 \text{ and } \tau=s\leq0$, we obtain 
		\begin{align}\label{Energy_esti5}
			\|\v(0)\|^2_{\H} &\leq  2 \|\boldsymbol{x}\|^2_{\H}\  e^{\nu\lambda s}  + 2 \|\mathfrak{Z}(s)\|^2_{\H}\  e^{\nu\lambda s }  + C\int_{s}^{0} \bigg\{ \|\mathfrak{Z}(t)\|^{2}_{\H}+ N^2\|\mathfrak{Z}(t)\|^{2}_{\L^{4}(\mathcal{O})} + \|\f\|^2_{\H^{-1}(\mathcal{O})}\bigg\}e^{\nu\lambda t} \d t.
		\end{align}
		For $\omega\in \Omega,$ let us set
		\begin{align}
			[\kappa_{11}(\omega)]^2 &= 2 +  2\sup_{s\leq 0}\bigg\{ \|\mathfrak{Z}(s)\|^2_{\H}\  e^{\nu\lambda s}\bigg\} 
			+C \int_{- \infty}^{0} \bigg\{\|\mathfrak{Z}(s)\|^2_{\H} + N^2 \|\mathfrak{Z}(s)\|^2_{\L^4(\mathcal{O})}  +  \|\f\|^2_{\H^{-1}(\mathcal{O})}\bigg\}e^{\nu\lambda t} \d t,\\	
			\kappa_{12}(\omega) &=   \|\mathfrak{Z}(\omega)(0)\|_{\H}.
		\end{align}
		In view of Lemma \ref{Bddns5} and Proposition \ref{radius}, we deduce that both $\kappa_{11},\kappa_{12}\in  \mathfrak{K}$ and also that $\kappa_{11}+\kappa_{12}=:\kappa_{13} \in \mathfrak{K}$ as well. Therefore the random set $\textbf{B}$ defined by $$\textbf{B}(\omega) := \{\u\in\H: \|\u\|_{\H}\leq \kappa_{13}(\omega)\}$$ is such that $\textbf{B}\in\mathfrak{DK}.$ 
		
		Let us now show that $\textbf{B}$ absorbs $\mathrm{D}$. Let $\omega\in\Omega$ be fixed. Since $\kappa_{\mathrm{D}}(\omega)\in \mathfrak{K}$, there exists $t_{\mathrm{D}}(\omega)\geq 0$ such that 
		\begin{align*}
			[\kappa_{\mathrm{D}}(\vartheta_{-t}\omega)]^2 e^{-\nu\lambda t } &\leq 1, \  \text{ for }\  t\geq t_{\mathrm{D}}(\omega).
		\end{align*}
		Thus, for $\omega\in\Omega$, if $\boldsymbol{x}\in \mathrm{D}(\vartheta_{-t}\omega)$ and $s\leq- t_{\mathrm{D}}(\omega),$ then by \eqref{Energy_esti5}, we obtain  
		$$\|\v(0,\omega; s, \boldsymbol{x}-\mathfrak{Z}(s))\|_{\H}\leq \kappa_{11}(\omega), \  \text{ for } \ \omega\in \Omega.$$
		 Thus, we conclude that, for $\omega\in\Omega$  
		 $$\|\u(0,\omega; s, \boldsymbol{x})\|_{\H} \leq \|\v(0,\omega; s, \boldsymbol{x}-\mathfrak{Z}(s))\|_{\H} + \|\mathfrak{Z}(\omega)(0)\|_{\H}\leq \kappa_{13}(\omega).$$
		 The above inequality implies that for $\omega\in \Omega$, $\u(0,\omega; s, \boldsymbol{x}) \in \textbf{B}(\omega)$, for all $s\leq -t_{\mathrm{D}}(\omega).$ This proves  $\textbf{B}$ absorbs $\mathrm{D}$.	
		\vskip 0.2 cm 
		\noindent
		\textbf{The RDS $\Phi$ is $\mathfrak{DK}$-asymptotically compact.} 		
		In order to establish the $\mathfrak{DK}$-asymptotically compactness of $\Psi$, we use uniform-tail estimates obtained in Lemma \ref{LR}. Let us assume that $\mathrm{D} \in \mathfrak{DK}$ and ${\textbf{B}}\in \mathfrak{DK}$ be such that $ {\textbf{B}}$ absorbs $\mathrm{D}$. Let us fix $\omega\in \Omega$ and take a sequence of positive numbers $\{t_m\}^{\infty}_{m=1}$ such that $t_1\leq t_2 \leq t_3 \leq \cdots$ and $t_m \to \infty$. We take an $\H$-valued sequence $\{\boldsymbol{x}_m\}^{\infty}_{m=1}$ such that $\boldsymbol{x}_m \in \mathrm{D}(\vartheta_{-t_m}\omega),$ for all $m\in \mathbb{N}.$ Our aim is to show that the sequence $\Phi(t_m;\vartheta_{-t_m}\omega)\x_{m}$ or $\v(0,\omega;-t_m,\x_{m}-\mathfrak{Z}(-t_m))$ of the solution to the system \eqref{csgmnse} has a convergent subsequence in $\H$.

		Lemma \ref{D-abH} implies that there exists $t_{\D}(\omega) \geq 1$ such that for all $s\leq - t_{\D}(\omega)$ and $\x\in\D(\vartheta_{s}\omega)$,
		\begin{align}\label{Uac1}
			\|\v(-1,\omega;s,\x-\mathfrak{Z}(s))\|_{\H}\leq C,
		\end{align}
		where $C$ is a positive constant independent of $s$ and  $\x.$
		Since $t_m\to \infty$, there exists $M_2=M_2(\omega,\D)\in\N$ such that $t_m\geq t_{\D}(\omega),$ for all $m\geq M_2$. Since $\x_{m} \in \D(\vartheta_{-t_m}\omega)$, \eqref{Uac1} implies that for all $m\geq M_2$, {the sequence 
			\begin{align}\label{Uac2}
				\{\v(-1,\omega;-t_m,\x_{m}-\mathfrak{Z}(-t_m))\}_{m\geq M_2} \subset\H
			\end{align}
			is bounded in $\H$.}	We infer from \eqref{Uac2} and Lemma \ref{D-convege} that there exists $\xi\in(-1,0)$, $\hat{\v}\in\H$ and a subsequence (not relabeling) such that for every $k\in\N$ 
		\begin{align}\label{Uac3}
			\v(\xi,\omega;-t_m,\x_{m} - \mathfrak{Z}(-t_m))=\v(\xi,\omega;-1,\v(-1,\omega;-t_m,\x_{m}-\mathfrak{Z}(-t_m))-\mathfrak{Z}(-1))\to \hat{\v}   \text{ in }   \L^2(\O_k).
		\end{align}
		as $m\to\infty$. Therefore, we infer from {the proof of } Lemma \ref{RDS_Conti1} that there exists a positive constant $C_{\mathrm{Lip}}$ such that
		\begin{align}\label{Uac7}
			&\|\v(0,\omega;\xi,\v(\xi,\omega;-t_m,\x_{m}-\mathfrak{Z}(-t_m))-\mathfrak{Z}(\xi))-\v(0,\omega;\xi,\hat{\v}-\mathfrak{Z}(\xi))\|^2_{\H}
			\nonumber\\ & \leq C_{\mathrm{Lip}}\|\v(\xi,\omega;-t_m,\x_{m}-\mathfrak{Z}(-t_m))-\hat{\v}\|^2_{\H}.
		\end{align}
		Let us now choose $\eta>0$ and fix it. Since $\hat{\v}\in\H$, there exists $K_1=K_1(\eta,\omega,\D)>0$ such that for all $k\geq K_1$, 
		\begin{align}\label{Uac4}
			\int_{\O\cap\{|x|\geq k\}}|\hat{\v}|^2\d x<\frac{\eta^2}{6C_{\mathrm{Lip}}},
		\end{align}
		where $C_{\mathrm{Lip}}>0$ is a constant defined in \eqref{Uac7}. Also, we know from Lemma \ref{LR} that there exist $M_3=M_3(\xi,\eta,\omega,\D)\in\N$ and $K_2=K_2(\xi,\eta,\omega,\D)\geq K_1$ such that for all $m\geq M_3$ and $k\geq K_2$,
		\begin{align}\label{Uac5}
			\int_{\O\cap\{|x|\geq k\}}|\v(\xi,\omega;-t_m,\x_{m}-\mathfrak{Z}(-t_m))|^2\d x<\frac{\eta^2}{6C_{\mathrm{Lip}}}.
		\end{align}
		From \eqref{Uac3}, we have that there exists $M_4=M_4(\xi,\eta,\omega,\D)>M_3$ such that for all $m\geq M_4$,
		\begin{align}\label{Uac6}
			\int_{\O\cap\{|x|< K_2\}}|\v(\xi,\omega;-t_m,\x_{m}-\mathfrak{Z}(-t_m))-\hat{\v}|^2\d x<\frac{\eta^2}{3C_{\mathrm{Lip}}}.
		\end{align}
		Finally, we infer from \eqref{Uac7} that
		\begin{align}\label{Uac8}
			&\|\v(0,\omega;\xi,\v(\xi,\omega;-t_m,\x_{m}-\mathfrak{Z}(-t_m))-\mathfrak{Z}(\xi))-\v(0,\omega;\xi,\hat{\v}-\mathfrak{Z}(\xi))\|^2_{\H}\nonumber\\&\leq  C_{\mathrm{Lip}}\bigg[\int_{\O\cap\{|x|<K_2\}}|\v(\xi,\omega;-t_m,\x_{m}-\mathfrak{Z}(-t_m))-\hat{\v}|^2\d x 
			\nonumber\\ & \qquad + \int_{\O\cap\{|x|\geq K_2\}} |\v(\xi,\omega;-t_m,\x_{m}-\mathfrak{Z}(-t_m))-\hat{\v}|^2\d x\bigg]\nonumber\\&\leq C_{\mathrm{Lip}}\bigg[\int_{\O\cap\{|x|<K_2\}}|\v(\xi,\omega;-t_m,\x_{m}-\mathfrak{Z}(-t_m))-\hat{\v}|^2\d x \nonumber\\ & \qquad +2\int_{\O\cap\{|x|\geq K_2\}}(|\v(\xi,\omega;-t_m,\x_{m}-\mathfrak{Z}(-t_m))|^2+|\hat{\v}|^2)\d x\bigg]
			\nonumber\\ & < \eta^2,
		\end{align}
		for every $m\geq M_3$, where we have used \eqref{Uac4}-\eqref{Uac6}. Since $\eta>0$ is arbitrary, we conclude the proof.
	\end{proof}

	\section{Invariant Measures}\label{sec7}\setcounter{equation}{0}
In this section, we first show the existence of invariant measures for SGMNS equations in $\H$. It is established in \cite{CF} that the existence of a compact invariant random set is a sufficient condition for the existence of invariant measures, that is, if a RDS $\Phi$ has compact invariant random set, then there exists an invariant measure for $\Phi$ (\cite[Corollary 4.4]{CF}). Since, the random attractor itself is a compact invariant random set, the existence of invariant measures for the SGMNS equations \eqref{S-GMNSE} is a direct consequence of \cite[Corollary 4.4]{CF} and Theorem \ref{Main_theorem_1}. 
Furthermore, we also prove the uniqueness of invariant measures for the system \eqref{SGMNSE} under suitable condition on the coefficients (see Theorem \ref{UIM1} below).

\subsection{Existence of invariant measures}
Let us define the transition operator $\{\mathrm{T}_t\}_{t\geq 0}$ by 
\begin{align}\label{71}
	\mathrm{T}_t f(\x)=\int_{\Omega}f(\Phi(\omega,t,\x))\d\mathbb{P}(\omega)=\E\left[f(\Phi(t,\x))\right], 
\end{align}
  for all $f\in\mathcal{B}_b(\H)$, where $\mathcal{B}_b(\H)$ is the space of all bounded and Borel measurable functions on $\H$ and $\Phi$ is the RDS corresponding to the system \eqref{S-GMNSE}, which is defined by \eqref{combine_sol}. The continuity of $\Phi$ (cf. Lemma \ref{RDS_Conti1}), \cite[Proposition 3.8]{BL} provides the following result: 
\begin{lemma}\label{Feller}
	The family $\{\mathrm{T}_t\}_{t\geq 0}$ is Feller, that is, $\mathrm{T}_tf\in\C_{b}(\H)$ if $f\in\C_b(\H)$, where $\C_b(\H)$ is the space of all bounded and continuous functions on $\H$. Furthermore, for any $f\in\C_b(\H)$, $\mathrm{T}_tf(\x)\to f(\x)$ as $t\downarrow 0$. 
\end{lemma}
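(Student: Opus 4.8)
The plan is to read off both assertions from the abstract criterion \cite[Proposition 3.8]{BL}, whose hypotheses reduce to the pathwise continuity properties of the RDS $\Phi$ already established above; throughout I would fix $\chi\geq0$ and write $\mathfrak{Z}=\mathfrak{Z}_\chi$ for the stationary Ornstein--Uhlenbeck process, recalling that $\Phi$ does not depend on $\chi$ (Proposition \ref{alpha_ind}). I would organize the argument in three short steps.

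First, for the Feller property, I would fix $t\geq0$, $f\in\C_b(\H)$ and a sequence $\x_n\to\x$ in $\H$. For each fixed $\omega\in\Omega$, since $\x_n-\mathfrak{Z}(\omega)(0)\to\x-\mathfrak{Z}(\omega)(0)$ in $\H$, Lemma \ref{RDS_Conti1} applied with $\mathfrak{Z}_n\equiv\mathfrak{Z}(\omega)$ and $\f_n\equiv\f$ yields
\[
\v\big(t,\mathfrak{Z}(\omega)\big)\big(\x_n-\mathfrak{Z}(\omega)(0)\big)\longrightarrow\v\big(t,\mathfrak{Z}(\omega)\big)\big(\x-\mathfrak{Z}(\omega)(0)\big)\quad\text{in }\H ,
\]
and hence $\Phi(t,\omega,\x_n)\to\Phi(t,\omega,\x)$ in $\H$ by \eqref{combine_sol}. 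Continuity of $f$ then gives $f(\Phi(t,\omega,\x_n))\to f(\Phi(t,\omega,\x))$ for every $\omega$, and since $|f|\leq\|f\|_{\mathrm{L}^\infty}$, the dominated convergence theorem applied to \eqref{71} gives $\mathrm{T}_tf(\x_n)\to\mathrm{T}_tf(\x)$. Together with the trivial bound $\|\mathrm{T}_tf\|_{\mathrm{L}^\infty}\leq\|f\|_{\mathrm{L}^\infty}$, this shows $\mathrm{T}_tf\in\C_b(\H)$.

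Second, for the continuity at $t=0$, I would fix $\x\in\H$ and $f\in\C_b(\H)$ and use that, for each $\omega\in\Omega$, the map $t\mapsto\Phi(t,\omega,\x)=\v^\chi(t)+\mathfrak{Z}(\omega)(t)$ is continuous from $[0,\infty)$ into $\H$ --- since $\v^\chi\in\mathrm{C}([0,\infty);\H)$ by Theorem \ref{solution} and $\mathfrak{Z}(\omega)$ is continuous into $\mathrm{X}=\H\cap\L^4(\mathcal{O})$, hence into $\H$ --- with $\Phi(0,\omega,\x)=\big(\x-\mathfrak{Z}(\omega)(0)\big)+\mathfrak{Z}(\omega)(0)=\x$. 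Then $f(\Phi(t,\omega,\x))\to f(\x)$ for every $\omega$ as $t\downarrow0$, and dominated convergence in \eqref{71} again gives $\mathrm{T}_tf(\x)\to f(\x)$.

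The point requiring care is not a hard estimate but the verification that the probabilistic framework legitimizes the two applications of dominated convergence: one needs $(\omega,\x)\mapsto\Phi(t,\omega,\x)$ to be jointly measurable, so that $\omega\mapsto f(\Phi(t,\omega,\x))$ is measurable and bounded in \eqref{71}, together with the pathwise continuity in the initial datum. The former is built into the construction of $\Phi$ through the Doss--Sussman transformation and the measurability of $\mathfrak{Z}$ established in Proposition \ref{SOUP1}, while the latter is precisely Lemma \ref{RDS_Conti1}; hence the whole statement follows by verifying the hypotheses of \cite[Proposition 3.8]{BL}, which we may simply quote.
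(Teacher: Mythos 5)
Your argument is correct and follows the same route as the paper, which does not write out a proof but simply invokes the continuity of $\Phi$ with respect to the initial data (Theorem \ref{RDS_Conti1}) together with \cite[Proposition 3.8]{BL}. Your write-up merely makes explicit the two dominated-convergence steps (pathwise continuity in $\x$ for the Feller property, pathwise continuity in $t$ with $\Phi(0,\omega,\x)=\x$ for the limit as $t\downarrow 0$) that the cited proposition encapsulates, so there is nothing to object to.
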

Analogously as in the proof of \cite[Theorem 5.6]{CF}, one can prove that $\Phi$ is a Markov RDS, that is, $\mathrm{T}_{t_1+t_2}=\mathrm{T}_{t_1}\mathrm{T}_{t_2}$, for all $t_1,t_2\geq 0$. Since, we know by \cite[Corollary 4.4]{CF} that if a Markov RDS on a Polish space has an invariant compact random set, then there exists a Feller invariant probability measure $\upmu$ for $\Phi$. 
\begin{definition}
	A Borel probability measure $\upmu$ on $\H$  is called an \emph{invariant measure} for a Markov semigroup $\{\mathrm{T}_t\}_{t\geq 0}$ of Feller operators on $\C_b(\H)$ if and only if $$\mathrm{T}_{t}^*\upmu=\upmu, \ t\geq 0,$$ where $(\mathrm{T}_{t}^*\upmu)(\Gamma)=\int_{\H}\mathrm{T}_{t}(\u,\Gamma)\upmu(\d\u),$ for $\Gamma\in\mathcal{B}(\H)$ and  $\mathrm{T}_t(\u,\cdot)$ is the transition probability, $\mathrm{T}_{t}(\u,\Gamma)=\mathrm{T}_{t}(\chi_{\Gamma})(\u),\ \u\in\H$.
\end{definition}

By the definition of random attractors, it is clear  that there exists an invariant compact random set in $\H$. A Feller invariant probability measure for a Markov RDS $\Phi$ on $\H$ is, by definition, an invariant probability measure for the semigroup $\{\mathrm{T}_t\}_{t\geq 0}$ defined by \eqref{71}. Hence, we have the following result on the existence of invariant measures for the system \eqref{S-GMNSE} in $\H$.
\begin{theorem}\label{thm6.3}
Suppose that  Assumptions \ref{assump1} (for RKHS) and \ref{assumpO} (for domain $\mathcal{O}$) are satisfied, and $\f\in\L^2(\O)$. Then, there exists an invariant measure for the system \eqref{S-GMNSE} in $\H$.
\end{theorem}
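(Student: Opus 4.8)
The plan is to deduce the existence of an invariant measure directly from the structure already assembled in the excerpt, namely the random attractor constructed in Theorem \ref{Main_theorem_1} together with the Feller and Markov properties of the transition semigroup. First I would recall that by Theorem \ref{Main_theorem_1} the RDS $\Phi$ generated by the SGMNS equations \eqref{S-GMNSE} admits a random $\mathfrak{DK}$-attractor $\mathcal{A}$ in $\H$; by the very definition of a random attractor, $\mathcal{A}=\{\mathcal{A}(\omega)\}_{\omega\in\Omega}$ is a compact, $\Phi$-invariant random set (that is, $\Phi(t,\omega)\mathcal{A}(\omega)=\mathcal{A}(\vartheta_t\omega)$ for all $t\ge 0$ and $\mathbb{P}$-a.s.\ $\omega$). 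Next I would invoke Lemma \ref{Feller}, which gives that $\{\mathrm{T}_t\}_{t\ge 0}$ is a Feller semigroup on $\C_b(\H)$, and the remark following it that $\Phi$ is a Markov RDS, i.e.\ $\mathrm{T}_{t_1+t_2}=\mathrm{T}_{t_1}\mathrm{T}_{t_2}$ for all $t_1,t_2\ge 0$, with the proof being analogous to \cite[Theorem 5.6]{CF}.

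With these ingredients in place, the argument is a direct citation of the abstract result of Crauel and Flandoli: \cite[Corollary 4.4]{CF} states that if a Markov RDS on a Polish space possesses an invariant compact random set, then there exists a Feller invariant probability measure $\upmu$ for $\Phi$, and such a $\upmu$ is by definition an invariant probability measure for the semigroup $\{\mathrm{T}_t\}_{t\ge 0}$ defined in \eqref{71}, i.e.\ $\mathrm{T}_t^*\upmu=\upmu$ for all $t\ge 0$. Since $\H$ (a closed subspace of $\L^2(\mathcal{O})$) is a separable Hilbert space, hence a Polish space, and since $\mathcal{A}$ furnishes precisely the required invariant compact random set, the hypotheses of \cite[Corollary 4.4]{CF} are met under Assumptions \ref{assump1} and \ref{assumpO}. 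Applying that corollary yields an invariant measure for the system \eqref{S-GMNSE} in $\H$, which is the assertion of Theorem \ref{thm6.3}.

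I do not anticipate a genuine obstacle here, since all the analytic work (well-posedness, continuity of the RDS, construction of the absorbing set, and $\mathfrak{DK}$-asymptotic compactness) has already been carried out in Sections \ref{sec5} and \ref{sec6}; the only points requiring a word of care are (i) verifying that the compact random set produced by the attractor is measurable and strictly invariant in the sense required by \cite{CF} — this is standard and built into the definition of a random attractor used via \cite[Theorem 2.8]{BCLLLR} — and (ii) confirming the Markov property $\mathrm{T}_{t_1+t_2}=\mathrm{T}_{t_1}\mathrm{T}_{t_2}$, which follows from the cocycle property of $\Phi$ together with the independence and stationarity of the increments of the driving Ornstein–Uhlenbeck/Wiener process, exactly as in \cite[Theorem 5.6]{CF}. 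Thus the proof consists of assembling Theorem \ref{Main_theorem_1}, Lemma \ref{Feller}, the Markov property, and \cite[Corollary 4.4]{CF}, with no further estimates needed.
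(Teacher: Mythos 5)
Your proposal is correct and follows essentially the same route as the paper: the paper likewise combines the Feller property (Lemma \ref{Feller}), the Markov property of $\Phi$ (proved as in \cite[Theorem 5.6]{CF}), and the random attractor from Theorem \ref{Main_theorem_1} as the compact invariant random set, then cites \cite[Corollary 4.4]{CF} to conclude. No gaps.
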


\subsection{Uniqueness of invariant measures}
For sufficiently large $\nu$, by using the exponential stability of solutions, we show that the invariant measure is unique. 
\subsubsection{Exponential stability}
Here, we obtain the exponential stability of solutions, which is used to obtain  the uniqueness of invariant measures.
\begin{theorem}\label{UIM1}
	Let $\u_1(\cdot)$ and $\u_2(\cdot)$ be two solutions of the system \eqref{S-GMNSE} with the initial data $\u_1^0,\u_2^0\in\H$, respectively. Then, we have
	\begin{align}\label{62}
		\mathbb{E}\left[\|\u_1(t)-\u_2(t)\|^2_{\H}\right]\leq \|\u_1^0-\u_2^0\|^2_{\H}\ \exp\left\{-\left[\nu\lambda   - \frac{ 7^{7}\cdot N^8}{2^{12}\cdot \nu^{7}}\right]t\right\},
	\end{align}
	for all $t\geq0$, provided $\nu   > \frac{7 N}{2} \left(\frac{1}{112 \lambda}\right)^{\frac18}$.
\end{theorem}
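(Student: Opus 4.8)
The plan is to work directly with the equation satisfied by the difference $\boldsymbol{w}(t) := \u_1(t) - \u_2(t)$. Since $\u_1$ and $\u_2$ both solve \eqref{S-GMNSE} with the \emph{same} cylindrical Wiener process and the same forcing, the noise term cancels upon subtraction, so $\boldsymbol{w}$ satisfies the pathwise (random but deterministic-in-form) equation
\begin{align*}
	\frac{\d\boldsymbol{w}(t)}{\d t} = -\nu\A\boldsymbol{w}(t) - \left[\B_N(\u_1(t)) - \B_N(\u_2(t))\right], \qquad \boldsymbol{w}(0) = \u_1^0 - \u_2^0,
\end{align*}
in $\V'$, $\mathbb{P}$-a.s. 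This is structurally identical to the uniqueness equation \eqref{Uni} and to equation \eqref{Enrgy} from the proof of Proposition \ref{alpha_ind}; in particular, the map $\u \mapsto \nu\A\u + \B_N(\u)$ here plays the role of $\mathcal{G}_N(\cdot) = \nu\A\cdot + \B_N(\cdot + \mathfrak{Z})$ with $\mathfrak{Z} = \boldsymbol{0}$, so Claim II of Theorem \ref{solution} applies verbatim.

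First I would justify that $t \mapsto \|\boldsymbol{w}(t)\|_{\H}^2$ is absolutely continuous and differentiate it: since $\boldsymbol{w} \in \mathrm{C}([0,T];\H) \cap \mathrm{L}^2(0,T;\V)$ with $\frac{\d\boldsymbol{w}}{\d t} \in \mathrm{L}^2(0,T;\V')$ (which follows from Theorem \ref{SGMNSE-Sol} applied to each $\u_i$ via the Doss-Sussman transformation), the energy equality \cite[Ch.~III, Lemma~1.2]{Temam} gives
\begin{align*}
	\frac{1}{2}\frac{\d}{\d t}\|\boldsymbol{w}(t)\|_{\H}^2 = -\left\langle \nu\A\boldsymbol{w}(t) + \B_N(\u_1(t)) - \B_N(\u_2(t)), \boldsymbol{w}(t)\right\rangle,
\end{align*}
for a.e.\ $t$, $\mathbb{P}$-a.s. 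Next I would invoke the monotonicity estimate \eqref{CL1} (with $\mathfrak{Z} = \boldsymbol{0}$, $\v_1 = \u_1(t)$, $\v_2 = \u_2(t)$), namely
\begin{align*}
	\left\langle \nu\A\boldsymbol{w}(t) + \B_N(\u_1(t)) - \B_N(\u_2(t)), \boldsymbol{w}(t)\right\rangle \geq \frac{\nu}{2}\|\boldsymbol{w}(t)\|_{\V}^2 - \frac{7^7 \cdot N^8}{2^{13}\cdot \nu^7}\|\boldsymbol{w}(t)\|_{\H}^2.
\end{align*}
Combining and then applying the Poincar\'e inequality \eqref{poin} in the form $\|\boldsymbol{w}\|_{\V}^2 \geq \lambda \|\boldsymbol{w}\|_{\H}^2$ to the viscous term yields
\begin{align*}
	\frac{\d}{\d t}\|\boldsymbol{w}(t)\|_{\H}^2 \leq -\nu\lambda\|\boldsymbol{w}(t)\|_{\H}^2 + \frac{7^7 \cdot N^8}{2^{12}\cdot \nu^7}\|\boldsymbol{w}(t)\|_{\H}^2 = -\left[\nu\lambda - \frac{7^7 \cdot N^8}{2^{12}\cdot \nu^7}\right]\|\boldsymbol{w}(t)\|_{\H}^2,
\end{align*}
where the factor $2^{12}$ (rather than $2^{13}$) absorbs the $\frac12$ from differentiating the square together with the remaining $\frac{\nu}{2}\|\boldsymbol{w}\|_{\V}^2 \geq \frac{\nu\lambda}{2}\|\boldsymbol{w}\|_{\H}^2$ being discarded as nonnegative; one should double-check the bookkeeping here carefully, keeping $\frac12 \cdot \frac{7^7 N^8}{2^{13}\nu^7}\cdot 2 = \frac{7^7 N^8}{2^{13}\nu^7}$ on the right and using the leftover viscosity term only via Poincar\'e. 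Then Gronwall's (or the variation of constants) inequality gives the pathwise bound $\|\boldsymbol{w}(t)\|_{\H}^2 \leq \|\u_1^0 - \u_2^0\|_{\H}^2 \exp\{-[\nu\lambda - 7^7 N^8/(2^{12}\nu^7)]t\}$, and taking expectation yields \eqref{62}. Finally, the condition $\nu > \frac{7N}{2}(112\lambda)^{-1/8}$ is exactly what makes the exponent negative: rearranging $\nu\lambda > 7^7 N^8/(2^{12}\nu^7)$ gives $\nu^8 > 7^7 N^8/(2^{12}\lambda)$, i.e.\ $\nu^8 > (7N)^8/(2 \cdot 7 \cdot 2^{12}\lambda) = (7N/2)^8/(112\lambda)$ since $2^8 \cdot 2 \cdot 7 \cdot 2^{12} = 2^{21}\cdot 7$ and $(7N/2)^8 = 7^8 N^8/2^8$, so $\nu^8 > (7N/2)^8/(112\lambda)$, which I would present with the arithmetic spelled out.

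The proof is essentially routine once the difference equation is set up, so there is no serious obstacle; the only genuine point requiring care is the \emph{cancellation of the stochastic integral}, which relies on both solutions being driven by the same noise realization (so that the statement is really about pathwise stability, the expectation being taken only at the very end). A secondary point worth stating explicitly is the regularity $\frac{\d\boldsymbol{w}}{\d t} \in \mathrm{L}^2_{\mathrm{loc}}(0,\infty;\V')$ needed to legitimately apply the energy equality lemma to $\boldsymbol{w}$; this is inherited from the corresponding property of $\v^{\chi} = \u - \mathfrak{Z}_\chi$ established in Theorem \ref{solution} together with the regularity of the Ornstein-Uhlenbeck process, and I would note it in one line rather than re-deriving it. The constant-chasing in the viscosity/Poincar\'e step ($2^{13} \to 2^{12}$) is the kind of thing a referee will check, so I would show that line in full.
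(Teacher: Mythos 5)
Your proposal is correct and follows essentially the same route as the paper: form the difference equation (noise cancels pathwise), apply the energy equality, invoke the monotonicity estimate \eqref{CL1} with $\mathfrak{Z}=\boldsymbol{0}$ together with Poincar\'e's inequality, and conclude by Gronwall; your constant bookkeeping ($2^{13}\to 2^{12}$ after doubling) and the equivalence of the exponent's negativity with $\nu>\tfrac{7N}{2}(112\lambda)^{-1/8}$ both check out. The only blemish is a harmless slip in one intermediate display of the final arithmetic, where $(7N)^8/(2\cdot 7\cdot 2^{12}\lambda)$ should read $(7N)^8/(7\cdot 2^{12}\lambda)$; the end result $(7N/2)^8/(112\lambda)$ is nevertheless correct.
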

\begin{proof}
	Let $\mathfrak{Z}(\cdot)=\u_1(\cdot)-\u_2(\cdot)$, then $\mathfrak{Z}(\cdot)$ satisfies  the following equality:
	\begin{align}\label{UN1}
		\|\mathfrak{Z}(t)\|^2_{\H}&=\|\mathfrak{Z}(0)\|^2_{\H}-2\nu\int_{0}^{t}\|\mathfrak{Z}(\zeta)\|^2_{\V}\d\zeta  -2\int_{0}^{t}\left \langle\B_N(\u_1(\zeta))-\B_N(\u_2(\zeta)),\mathfrak{Z}(\zeta)\right\rangle\d\zeta,
	\end{align}
	for all $t\geq0$. An argument similar to \eqref{CL1} gives
	\begin{align}\label{UN2}
		\|\mathfrak{Z}(t)\|^2_{\H}&\leq \|\mathfrak{Z}(0)\|^2_{\H}- \left[\nu\lambda   - \frac{ 7^{7}\cdot N^8}{2^{12}\cdot \nu^{7}}\right] \int_{0}^{t}\|\mathfrak{Z}(\zeta)\|^2_{\H}\d\zeta,
	\end{align}
for all $t\geq0$. Now, taking expectation and applying Gronwall's inequality, one can  conclude the proof.
\end{proof}
\begin{theorem}\label{UIM2}
	Let the condition $\nu   > \frac{7 N}{2} \left(\frac{1}{112 \lambda}\right)^{\frac18}$ given in Theorem \ref{UIM1} be satisfied, and $\f\in\L^2(\O)$. Then, there is a unique invariant measure for the system \eqref{S-GMNSE}. Moreover, the invariant measure is ergodic and strongly mixing.
\end{theorem}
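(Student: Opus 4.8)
The plan is to derive all three assertions from the exponential stability estimate \eqref{62} of Theorem \ref{UIM1}, combined with the Feller property of the semigroup $\{\mathrm{T}_{t}\}_{t\geq0}$ (Lemma \ref{Feller}), the Markov property of $\Phi$ (obtained, as indicated after Lemma \ref{Feller}, analogously to \cite[Theorem 5.6]{CF}), and the existence of an invariant measure (Theorem \ref{thm6.3}). Set $\beta:=\nu\lambda-\frac{7^{7}\cdot N^{8}}{2^{12}\cdot\nu^{7}}$. Since $112=2^{4}\cdot 7$, the hypothesis $\nu>\frac{7N}{2}\big(\frac{1}{112\lambda}\big)^{1/8}$ is precisely the condition $\beta>0$, so \eqref{62} reads $\mathbb{E}\big[\|\u_{1}(t)-\u_{2}(t)\|_{\H}^{2}\big]\leq\|\u_{1}^{0}-\u_{2}^{0}\|_{\H}^{2}\,e^{-\beta t}$ with a genuinely positive rate $\beta$.

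The first step is a uniform-in-time moment bound which is then transferred to any invariant measure. Starting from the Doss--Sussman decomposition $\u(t)=\v(t)+\mathfrak{Z}(t)$, taking expectation in the energy estimate \eqref{Energy_esti1} with $\tau=0$, and using the stationarity of $\mathfrak{Z}$ together with the bound \eqref{E-OUP1}, one obtains
\begin{align*}
	\mathbb{E}\big[\|\u(t;\x)\|_{\H}^{2}\big]\leq C_{1}\,\|\x\|_{\H}^{2}\,e^{-\nu\lambda t}+C_{0},\qquad t\geq 0,
\end{align*}
with $C_{0},C_{1}$ independent of $t$ and $\x$. Applying the invariance identity $\int_{\H}\mathrm{T}_{t}\varphi_{R}\,\d\upmu=\int_{\H}\varphi_{R}\,\d\upmu$ to the truncations $\varphi_{R}(\x)=\|\x\|_{\H}^{2}\wedge R$ and noting that $\mathrm{T}_{t}\varphi_{R}(\x)=\mathbb{E}\big[\|\u(t;\x)\|_{\H}^{2}\wedge R\big]\leq\big(C_{1}\|\x\|_{\H}^{2}e^{-\nu\lambda t}+C_{0}\big)\wedge R$, one lets $t\to\infty$ (dominated convergence) and then $R\to\infty$ (monotone convergence) to get $\int_{\H}\|\x\|_{\H}^{2}\,\d\upmu(\x)\leq C_{0}<\infty$ for every invariant measure $\upmu$; in particular each invariant measure has finite first moment.

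Next comes the coupling argument. For $\varphi\in\C_{b}(\H)$ that is Lipschitz with constant $L_{\varphi}$, the Cauchy--Schwarz inequality and \eqref{62} give
\begin{align*}
	|\mathrm{T}_{t}\varphi(\x)-\mathrm{T}_{t}\varphi(\y)|\leq L_{\varphi}\,\big(\mathbb{E}\big[\|\Phi(t,\x)-\Phi(t,\y)\|_{\H}^{2}\big]\big)^{1/2}\leq L_{\varphi}\,\|\x-\y\|_{\H}\,e^{-\beta t/2}.
\end{align*}
Hence, if $\upmu_{1},\upmu_{2}$ are invariant measures, using $\mathrm{T}_{t}^{*}\upmu_{i}=\upmu_{i}$ and the finiteness of the first moments,
\begin{align*}
	\Big|\int_{\H}\varphi\,\d\upmu_{1}-\int_{\H}\varphi\,\d\upmu_{2}\Big|
	&=\Big|\int_{\H}\!\int_{\H}\big(\mathrm{T}_{t}\varphi(\x)-\mathrm{T}_{t}\varphi(\y)\big)\,\d\upmu_{1}(\x)\,\d\upmu_{2}(\y)\Big|\\
	&\leq L_{\varphi}\,e^{-\beta t/2}\int_{\H}\!\int_{\H}\|\x-\y\|_{\H}\,\d\upmu_{1}(\x)\,\d\upmu_{2}(\y),
\end{align*}
and letting $t\to\infty$ forces $\int_{\H}\varphi\,\d\upmu_{1}=\int_{\H}\varphi\,\d\upmu_{2}$ for every bounded Lipschitz $\varphi$, hence $\upmu_{1}=\upmu_{2}$. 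Together with Theorem \ref{thm6.3} this yields a unique invariant measure $\upmu$. The same estimate, comparing $\mathrm{T}_{t}\varphi(\x)$ with $\int_{\H}\mathrm{T}_{t}\varphi\,\d\upmu=\int_{\H}\varphi\,\d\upmu$, shows $\mathrm{T}_{t}\varphi(\x)\to\int_{\H}\varphi\,\d\upmu$ as $t\to\infty$ for every $\x\in\H$ and every bounded Lipschitz $\varphi$, and a uniform approximation of functions in $\C_{b}(\H)$ by Lipschitz ones extends this to all $\varphi\in\C_{b}(\H)$. Uniqueness makes $\upmu$ an extreme point of the convex set of invariant probability measures, hence ergodic; and combining the pointwise convergence $\mathrm{T}_{t}\varphi(\x)\to\int_{\H}\varphi\,\d\upmu$ with dominated convergence shows $\int_{\H}(\mathrm{T}_{t}\varphi)\,\psi\,\d\upmu\to\int_{\H}\varphi\,\d\upmu\int_{\H}\psi\,\d\upmu$ for all $\varphi,\psi\in\C_{b}(\H)$, i.e. $\upmu$ is strongly mixing.

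I expect the only delicate points to be the two transfer steps: establishing the uniform-in-time second moment bound for solutions and passing it to an arbitrary invariant measure (which requires upgrading the pathwise estimates of Section \ref{sec6} to bounds on expectations via \eqref{E-OUP1}), and justifying the extension from Lipschitz test functions to general bounded continuous ones in the mixing statement. The core of the proof, however, is the one-line contraction for $\mathrm{T}_{t}$ that comes straight out of \eqref{62}, and this is exactly where the smallness condition on $N/\nu$ is used.
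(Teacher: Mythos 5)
Your proposal is correct and follows essentially the same route as the paper: the exponential stability estimate \eqref{62} yields a contraction of $\mathrm{T}_t$ on Lipschitz test functions, integration against two invariant measures forces them to coincide, and ergodicity/mixing follow from uniqueness and the exponential convergence to equilibrium. The one place you go beyond the paper is in actually justifying the finite-moment property $\int_{\H}\|\x\|_{\H}\,\d\upmu(\x)<\infty$ of an arbitrary invariant measure via the uniform-in-time second-moment bound and a truncation argument — the paper simply asserts this finiteness — so your write-up is, if anything, more complete on that point.
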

	\begin{proof}
			For $\varphi\in \text{Lip}(\H)$ (Lipschitz $\varphi$) and an invariant measure $\upmu$, we have for all $t\geq 0$,
			\begin{align*}
				\left|\mathrm{T}_t\varphi(\u_1^0)-\int_{\H}\varphi(\u_2^0)\upmu(\d \u_2^0)\right|
				  &=	\left|\E[\varphi(\u(t,\u_1^0))]-\int_{\H}\mathrm{T}_t\varphi(\u_2^0)\upmu(\d \u_2^0)\right|\nonumber\\&=\left|\int_{\H}\E\left[\varphi(\u(t,\u_1^0))-\varphi(\u(t,\u_2^0))\right]\upmu(\d \u_2^0)\right|\nonumber\\&\leq L_{\varphi}\int_{\H}\E\left[\left\|\u(t,\u_1^0)-\u(t,\u_2^0)\right\|_{\H}\right]\upmu(\d \u_2^0)\nonumber\\&\leq L_{\varphi}\mathbb{E}\left[e^{-\left(\nu\lambda   - \frac{ 7^{7}\cdot N^8}{2^{12}\cdot \nu^{7}}\right)t}\right]\int_{\H}\left\|\u_1^0-\u_2^0\right\|_{\H}\upmu(\d \u_2^0)
			\nonumber\\ & 	\to 0 \ \text{ as }\  t\to \infty,
			\end{align*}
			since $\int_{\H}\|\u_2^0\|_{\H}\upmu(\d \u_2^0)+\int_{\H}\|\u_1^0\|_{\H}\upmu(\d \u_2^0)<+\infty$. Hence, we conclude
			\begin{align}\label{U2}
				\lim_{t\to\infty}\mathrm{T}_t\varphi(\u_1^0)=\int_{\H}\varphi(\u_2^0)\d\upmu(\u_2^0), \ \upmu\text{-a.s., for all }\ \u_1^0\in\H\ \text{ and }\  \varphi\in\C_b(\H),
			\end{align} 
			by the density of $\text{Lip}(\H)$ in $\C_b (\H)$. Since we have a stronger result that $\mathrm{T}_t\varphi(\u_1^0)$ converges exponentially fast to the equilibrium, this property is known as the \emph{exponential mixing property}. Now suppose that  $\widetilde{\upmu}$ is an another invariant measure, then we have for all $t\geq 0$,
			\begin{align}
			&	\left|\int_{\H}\varphi(\u_1^0)\upmu(\d\u_1^0)-\int_{\H}\varphi(\u_2^0)\widetilde{\upmu}(\d \u_2^0)\right| \nonumber\\&= \left|\int_{\H}\mathrm{T}_t\varphi(\u_1^0)\upmu(\d \u_1^0)-\int_{\H}\mathrm{T}_t\varphi(\u_2^0)\widetilde{\upmu}(\d \u_2^0)\right|\nonumber\\&=\left|\int_{\H}\int_{\H}\left[\mathrm{T}_t\varphi(\u_1^0)-\mathrm{T}_t\varphi(\u_2^0)\right]\upmu(\d \u_1^0)\wi\upmu(\d \u_2^0)\right|\nonumber\\&=\left|\int_{\H}\int_{\H}\E\left[\varphi(\u(t,\u_1^0))-\varphi(\u(t,\u_2^0))\right]\upmu(\d \u_1^0)\wi\upmu(\d \u_2^0)\right|\nonumber\\&\leq L_{\varphi}\int_{\H}\int_{\H}\E\left[\left\|\u(t,\u_1^0)-\u(t,\u_2^0)\right\|_{\H}\right]\upmu(\d \u_1^0)\wi\upmu(\d \u_2^0) \nonumber\\&\leq L_{\varphi}e^{-\left(\nu\lambda   - \frac{ 7^{7}\cdot N^8}{2^{12}\cdot \nu^{7}}\right)t}
				 \int_{\H}\int_{\H}\E\left[\left\|\u_1^0-\u_2^0\right\|_{\H}\right]\upmu(\d \u_1^0)\wi\upmu(\d \u_2^0) \nonumber\\&\leq L_{\varphi}e^{-\left(\nu\lambda   - \frac{ 7^{7}\cdot N^8}{2^{12}\cdot \nu^{7}}\right)t}
				 \bigg(\int_{\H}\|\u_1^0\|_{\H}\upmu(\d \u_1^0)+\int_{\H}\|\u_2^0\|_{\H}\widetilde{\upmu}(\d \u_2^0)\bigg)
				\nonumber\\ & \to 0 \ \text{ as }\ t\to\infty,
			\end{align}
			since $\int_{\H}\|\u_1^0\|_{\H}\upmu(\d \u_1^0)<+\infty$ and $\int_{\H}\|\u_2^0\|_{\H}\widetilde{\upmu}(\d \u_2^0)<+\infty$.  Since $\upmu$ is the unique invariant measure for $\{\mathrm{T}_t\}_{t\geq 0}$, it follows from \cite[Theorem 3.2.6]{DZ} that $\upmu$ is ergodic also. 
	\end{proof}

\begin{appendix}
	\renewcommand{\thesection}{\Alph{section}}
	\numberwithin{equation}{section}
	
	\section{Convergence to weak solutions of 3D Navier-Stokes equations} \label{ApA}\setcounter{equation}{0}
In this section, we show that a subsequence of weak the solution to 3D GMNS equations converges to a weak solution of 3D NSE as $N\to\infty$. Let us consider the following projected NSE and GMNS equations (we denote the solution of NSE and GMNS equations by $\u^*$ and $\u^N$, respectively, in this section):
\begin{equation}\label{D-NSE}
	\left\{
	\begin{aligned}
		\frac{\d\u^*(t)}{\d t} + \nu \A\u^*(t)+\B(\u^*(t))  &=\mathcal{P}\f , \ \ \ t> 0, \\ 
		\u^*(0)&=\boldsymbol{x}^*,
	\end{aligned}
	\right.
\end{equation}
and 
\begin{equation}\label{D-GMNSE}
	\left\{
	\begin{aligned}
		\frac{\d\u^{N}(t)}{\d t} + \nu \A\u^{N}(t)+F_{N}(\|\u^{N}\|_{\L^4(\mathcal{O})}) \cdot\B(\u^{N}(t))  &=\mathcal{P}\f , \ \ \ t> 0, \\ 
		\u^{N}(0)&=\boldsymbol{x}^{N},
	\end{aligned}
	\right.
\end{equation}
respectively.

Next we provide the definition of Leray-Hopf weak solution for the system \eqref{D-NSE}. For the system \eqref{D-GMNSE}, we refer readers to Definition \ref{defn5.9} with $\mathfrak{Z}\equiv\boldsymbol{0}$.
\begin{definition}\label{defnNSE}
	For $T>0$, assume that $\x^* \in \H$ and $\f\in \H^{-1}(\mathcal{O})$. A function $\u^*(\cdot)$ is called a \emph{Leray-Hopf weak solution} of the system \eqref{D-NSE} on the time interval $[0, T]$, if 
	\begin{align}\label{A2}
	\u^*\in  \mathrm{L}^{\infty}(0,T; \H) \cap \mathrm{L}^{2}(0,T; \V), \;\;\; \frac{\d\u^*}{\d t}\in\mathrm{L}^{\frac43}(0,T;\V'),
		\end{align}
	and it satisfies 
	\begin{itemize}
		\item [(i)] for any $\psi\in \V,$ 
		\begin{align}\label{W-NSE}
			\left<\frac{\d\u^*(t)}{\d t}, \psi\right>&=  - \left\langle \nu \A\u^*(t)+\B(\u^*(t)), \psi \right\rangle  +\left\langle \f , \psi \right\rangle,
		\end{align}
		for a.e. $t\in[0,T];$
		\item [(ii)] the initial data:
		$$\u^*(0)=\x^* \ \text{ in }\ \H,$$
		\item [(iii)] the following energy inequality is satisfied: 
		\begin{align}\label{A5}
			\|\u^*(t)\|_{\H}^2+2\nu\int_0^t\|\u^*(s)\|_{\V}^2\d s\leq \|\x^*\|_{\H}^2+2\int_0^t\langle\f,\u^*(s)\rangle\d s,
		\end{align}
		for all $t\in[0,T]$. 
	\end{itemize}
\end{definition}
\begin{remark}
	The regularity given in \eqref{A2} implies that $\u^*\in\C_w([0,T];\H)$, where $\C_w([0,T];\H)$ is the space of functions $\u^*:[0,T]\to\H$ which are weakly continuous; that is, such that for all $\varphi\in\H$, the scalar function $[0,T]:t\mapsto(\u^*(t),\varphi)\in\mathbb{R}$ is continuous on $[0,T].$ The existence of a Leray-Hopf weak solution to the system  \eqref{D-NSE} can be obtained from \cite[Theorem 3.1]{Temam}. 
\end{remark}
Let us now state the main result of this section. The proof of following theorem will be given after an auxiliary lemma. 
\begin{theorem}\label{Con-N}
	Suppose that $\f\in\H^{-1}(\mathcal{O})$. Also, let $\u^{N}(\cdot)$ be a weak solution of the GMNS equations \eqref{D-GMNSE} with the initial value $\x^{N}\in\H$ and  $\x^{N}\xrightharpoonup{} \x^*$ weakly in $\H$ as $N\to\infty$.  Then, there exists a sequence $\{\u^{N_j}(\cdot)\}_{j=1}^{\infty}$ and an element $\u^*(\cdot)\in \mathrm{L}^{\infty}(0,T;\H)\cap \mathrm{L}^{2}(0,T;\V)$ such that  $\{\u^{N_j}(\cdot)\}_{j=1}^{\infty}$ converges to $\u^*$ weak-star in $\mathrm{L}^{\infty}(0,T;\H)$, weakly in $\mathrm{L}^{2}(0,T;\V)$ and strongly to $\mathrm{L}^{2}(0,T;\L^2_{\mathrm{loc}}(\mathcal{O}))$ as $N_j\to\infty$. In addition, $\u^*(\cdot)$ is a  weak solution of the 3D NSE \eqref{D-NSE} with initial data $\x^*$ (in the sense of Definition \ref{defnNSE} satisfying (i) and (ii)) on the interval $[0,T]$, for every $T>0$. 
	Furthermore, if $\x^N\to\x$ strongly in $\H$, then $u^*$ satisfies the energy equality \eqref{A5}. 
\end{theorem}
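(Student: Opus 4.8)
The plan is to obtain uniform (in $N$) a priori bounds, extract weakly/weak-star convergent subsequences, pass to the limit in the weak formulation — the only delicate term being the modified nonlinearity $F_N(\|\u^N\|_{\L^4(\mathcal{O})})\B(\u^N)$ — and finally address the energy equality. First I would derive the energy estimate: testing \eqref{D-GMNSE} with $\u^N$ and using $b_N(\u^N,\u^N,\u^N)=0$ gives, for a.e. $t$,
\[
\frac{1}{2}\frac{\d}{\d t}\|\u^N(t)\|_{\H}^2+\nu\|\u^N(t)\|_{\V}^2=\langle\f,\u^N(t)\rangle\le\frac{\nu}{2}\|\u^N(t)\|_{\V}^2+\frac{1}{2\nu}\|\f\|_{\H^{-1}(\mathcal{O})}^2.
\]
Since $\|\x^N\|_{\H}$ is bounded (weak convergence implies boundedness), integrating yields that $\{\u^N\}$ is bounded in $\mathrm{L}^{\infty}(0,T;\H)\cap\mathrm{L}^2(0,T;\V)$, uniformly in $N$. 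Combining with $F_N\le 1$, the Ladyzhenskaya inequality \eqref{lady} gives a uniform bound on $F_N(\|\u^N\|_{\L^4(\mathcal{O})})\B(\u^N)$ in $\mathrm{L}^{4/3}(0,T;\V')$, hence $\frac{\d\u^N}{\d t}$ is bounded in $\mathrm{L}^{4/3}(0,T;\V')$. By Banach--Alaoglu I would extract a subsequence $\{\u^{N_j}\}$ with $\u^{N_j}\xrightharpoonup{w^*}\u^*$ in $\mathrm{L}^\infty(0,T;\H)$, $\u^{N_j}\xrightharpoonup{w}\u^*$ in $\mathrm{L}^2(0,T;\V)$, and $\frac{\d\u^{N_j}}{\d t}\xrightharpoonup{w}\frac{\d\u^*}{\d t}$ in $\mathrm{L}^{4/3}(0,T;\V')$; on each bounded $\mathcal{O}_1\Subset\mathcal{O}$ the Aubin--Lions lemma gives strong convergence $\u^{N_j}\to\u^*$ in $\mathrm{L}^2(0,T;\L^2(\mathcal{O}_1))$, and (after a further diagonal subsequence) a.e.\ on $(0,T)\times\mathcal{O}$.

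The main obstacle is passing to the limit in the nonlinear term against a test function. I would fix $\psi\in\mathcal{V}$ with $\mathrm{supp}\,\psi\subset\mathcal{O}_1$ bounded, write $b_N(\u^{N_j},\u^{N_j},\psi)=-F_{N_j}(\|\u^{N_j}\|_{\L^4(\mathcal{O})})\,b(\u^{N_j},\psi,\u^{N_j})$, and split
\[
F_{N_j}(\|\u^{N_j}\|_{\L^4})\,b(\u^{N_j},\psi,\u^{N_j})-b(\u^*,\psi,\u^*)
=\big[F_{N_j}(\|\u^{N_j}\|_{\L^4})-1\big]\,b(\u^{N_j},\psi,\u^{N_j})+\big[b(\u^{N_j},\psi,\u^{N_j})-b(\u^*,\psi,\u^*)\big].
\]
The second bracket tends to $0$ after integration in time by the standard argument: $b(\u^{N_j},\psi,\u^{N_j})$ involves only $\|\nabla\psi\|_{\L^\infty}$ and products of components of $\u^{N_j}$ over $\mathcal{O}_1$, so the $\mathrm{L}^2(0,T;\L^2(\mathcal{O}_1))$-strong convergence together with the $\mathrm{L}^2(0,T;\V)$-weak convergence (and the uniform $\mathrm{L}^\infty(0,T;\H)$ bound) suffices. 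For the first bracket, $|b(\u^{N_j},\psi,\u^{N_j})|\le C\|\u^{N_j}\|_{\L^4(\mathcal{O})}^2\|\psi\|_{\V}\le C\|\u^{N_j}\|_{\H}^{1/2}\|\u^{N_j}\|_{\V}^{3/2}\|\psi\|_{\V}$ is bounded in $\mathrm{L}^{4/3}(0,T)$ uniformly, while $0\le 1-F_{N_j}(\|\u^{N_j}(t)\|_{\L^4(\mathcal{O})})\le 1$ and equals $0$ whenever $\|\u^{N_j}(t)\|_{\L^4(\mathcal{O})}\le N_j$; since $\|\u^{N_j}\|_{\L^4(\mathcal{O})}$ is bounded in $\mathrm{L}^{8/3}(0,T)$ uniformly in $j$, Chebyshev shows the Lebesgue measure of $\{t:\|\u^{N_j}(t)\|_{\L^4(\mathcal{O})}>N_j\}$ is $O(N_j^{-8/3})\to 0$, and a uniform-integrability/Hölder argument forces $\int_0^T\big[1-F_{N_j}(\|\u^{N_j}\|_{\L^4})\big]\,b(\u^{N_j},\psi,\u^{N_j})\,\d t\to 0$. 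Hence $\int_0^T b_{N_j}(\u^{N_j},\u^{N_j},\psi)\,\d t\to\int_0^T b(\u^*,\u^*,\psi)\,\d t$, and the linear terms pass to the limit trivially by weak convergence; by density of $\mathcal{V}$ in $\V$ this establishes \eqref{W-NSE}. The initial condition $\u^*(0)=\x^*$ follows from the weak continuity of $\u^*$ into $\H$ (a consequence of $\u^*\in\mathrm{L}^2(0,T;\V)$, $\frac{\d\u^*}{\d t}\in\mathrm{L}^{4/3}(0,T;\V')$), integration by parts against $\psi\,\phi(t)$ with $\phi(T)=0$, and $\x^{N_j}\xrightharpoonup{w}\x^*$.

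For the last assertion, when $\x^N\to\x$ strongly, the GMNS solutions satisfy the energy equality (Theorem \ref{solution} with $\mathfrak{Z}\equiv\boldsymbol{0}$, $\chi=0$, and $b_N(\u^N,\u^N,\u^N)=0$):
\[
\|\u^{N_j}(t)\|_{\H}^2+2\nu\int_0^t\|\u^{N_j}(s)\|_{\V}^2\,\d s=\|\x^{N_j}\|_{\H}^2+2\int_0^t\langle\f,\u^{N_j}(s)\rangle\,\d s.
\]
Passing $j\to\infty$: the right side converges by $\|\x^{N_j}\|_{\H}\to\|\x\|_{\H}$ and weak convergence in $\mathrm{L}^2(0,T;\V)$; on the left, weak lower semicontinuity of the $\H$-norm and of $\int_0^t\|\cdot\|_{\V}^2$ gives the energy \emph{inequality} \eqref{A5} for a.e.\ $t$, which is all Definition \ref{defnNSE}(iii) requires. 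To upgrade to the energy equality one invokes that $\u^*$ is itself a weak solution with $\u^*\in\mathrm{L}^2(0,T;\V)$, $\frac{\d\u^*}{\d t}\in\mathrm{L}^{4/3}(0,T;\V')$, so that the Lions--Magenes lemma applies and $t\mapsto\|\u^*(t)\|_{\H}^2$ is absolutely continuous with $\frac{\d}{\d t}\|\u^*\|_{\H}^2=2\langle\frac{\d\u^*}{\d t},\u^*\rangle$; combined with the limit of the approximate equalities (which forces equality a.e., hence everywhere by continuity) this yields \eqref{A5} with equality. I expect the first bracket in the nonlinear-term splitting — controlling $[1-F_{N_j}]\,b(\u^{N_j},\psi,\u^{N_j})$ via the vanishing measure of the "large $\L^4$-norm" set — to be the genuinely new point; everything else follows the classical Leray--Hopf compactness scheme.
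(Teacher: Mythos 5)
Your proposal follows essentially the same route as the paper: the uniform bound in $\mathrm{L}^{\infty}(0,T;\H)\cap\mathrm{L}^{2}(0,T;\V)$, the $\mathrm{L}^{4/3}(0,T;\V')$ bound on $\frac{\d\u^N}{\d t}$, the splitting of the nonlinearity into $[F_{N_j}-1]\,b(\u^{N_j},\u^{N_j},\cdot)$ plus $b(\u^{N_j},\u^{N_j},\cdot)-b(\u^*,\u^*,\cdot)$, and the Chebyshev estimate $|\{t:\|\u^{N_j}(t)\|_{\L^4(\mathcal{O})}>N_j\}|=O(N_j^{-8/3})$, which is precisely the content of the paper's Lemma \ref{FN-Lemma} (there packaged as $F_{N}(\|\u^{N}\|_{\L^4(\mathcal{O})})\to 1$ in $\mathrm{L}^p(0,T)$ and combined with a H\"older $4$--$4/3$ pairing against the uniform $\mathrm{L}^{4/3}$ bound on the trilinear form). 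Your identification of this as the genuinely new point is accurate.

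One step is wrong, though it does not damage the theorem: your attempted upgrade of the energy \emph{inequality} to an energy \emph{equality} for $\u^*$ via the Lions--Magenes lemma fails. That lemma requires $\u^*\in\mathrm{L}^2(0,T;\V)$ together with $\frac{\d\u^*}{\d t}\in\mathrm{L}^{2}(0,T;\V')$, whereas here one only has $\frac{\d\u^*}{\d t}\in\mathrm{L}^{4/3}(0,T;\V')$; if this argument worked it would settle the energy equality for arbitrary 3D Leray--Hopf weak solutions, which is open. Fortunately \eqref{A5} is stated as an inequality (despite the word ``equality'' in the theorem), and your preceding liminf argument --- strong convergence of $\|\x^{N_j}\|_{\H}$, weak convergence of the forcing term, and weak lower semicontinuity of the norms on the left-hand side --- already yields exactly \eqref{A5}, which is all the paper proves as well. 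Simply delete the final upgrade paragraph.
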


The following lemma is a key element to prove the main result of this section.

\begin{lemma}\label{FN-Lemma}
	Let $\u^N$ be a solution of the 3D GMNS equations \eqref{D-GMNSE} with the initial value $\x^{N}\in\H$ and  $\x^{N}\xrightharpoonup{} \x^*$ weakly in $\H$ as $N\to\infty$. Then, for each $p\geq1$,
	\begin{align}
		F_N\left(\|\u^{N}(s)\|_{\L^4(\mathcal{O})}\right) \to 1 \;\;\; \text{  in }\;\;\; \mathrm{L}^{p}(0,T;\R),
	\end{align}
as $N\to\infty$.
\end{lemma}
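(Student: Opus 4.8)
The plan is to reduce the assertion to the case $p=1$ and then to a single Chebyshev-type estimate, the only genuine input being a uniform-in-$N$ a priori bound on $\u^N$ in $\mathrm{L}^{8/3}(0,T;\L^4(\mathcal{O}))$. First I would note that, by the definition \eqref{FN} of $F_N$, one has $1-F_N(r)=0$ whenever $r\le N$ and $0\le 1-F_N(r)\le 1$ for all $r>0$, so in particular
\[
0\le 1-F_N(r)\le \Big(\tfrac{r}{N}\Big)^{8/3},\qquad r>0 .
\]
Moreover, since $0\le 1-F_N\le 1$, for every $p\ge1$ we have $\big|1-F_N(\|\u^N(s)\|_{\L^4(\mathcal{O})})\big|^p\le 1-F_N(\|\u^N(s)\|_{\L^4(\mathcal{O})})$, so it suffices to show that $\int_0^T\big(1-F_N(\|\u^N(s)\|_{\L^4(\mathcal{O})})\big)\d s\to0$ as $N\to\infty$.

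Next I would record the uniform-in-$N$ energy estimate for \eqref{D-GMNSE}. Testing the equation with $\u^N$ (this is rigorous by the energy equality of Theorem~\ref{solution} with $\mathfrak{Z}\equiv\boldsymbol{0}$ and $\chi=0$) and using the cancellation $F_N(\|\u^N\|_{\L^4(\mathcal{O})})\,b(\u^N,\u^N,\u^N)=0$ from \eqref{b0}, together with Young's inequality on $\langle\f,\u^N\rangle$, one obtains
\[
\|\u^N(t)\|_{\H}^2+\nu\int_0^t\|\u^N(s)\|_{\V}^2\,\d s\le \|\x^N\|_{\H}^2+\frac{t}{\nu}\|\f\|_{\H^{-1}(\mathcal{O})}^2,\qquad t\in[0,T].
\]
The key feature is that the modifying factor disappears from the energy balance exactly as the usual convective nonlinearity does, so no $N$-dependent constant appears. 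Since $\x^N\rightharpoonup\x^*$ weakly in $\H$, the sequence $\{\x^N\}$ is bounded in $\H$, hence $\{\u^N\}$ is bounded in $\mathrm{L}^\infty(0,T;\H)\cap\mathrm{L}^2(0,T;\V)$ uniformly in $N$. Ladyzhenskaya's inequality \eqref{lady} raised to the power $8/3$ gives $\|\u^N(s)\|_{\L^4(\mathcal{O})}^{8/3}\le 2^{4/3}\|\u^N(s)\|_{\H}^{2/3}\|\u^N(s)\|_{\V}^2$; integrating over $(0,T)$ yields a constant $C_0<\infty$, independent of $N$, with $\int_0^T\|\u^N(s)\|_{\L^4(\mathcal{O})}^{8/3}\,\d s\le C_0$.

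Combining the pointwise bound with this estimate then gives
\[
\int_0^T\big(1-F_N(\|\u^N(s)\|_{\L^4(\mathcal{O})})\big)\,\d s\le \frac{1}{N^{8/3}}\int_0^T\|\u^N(s)\|_{\L^4(\mathcal{O})}^{8/3}\,\d s\le \frac{C_0}{N^{8/3}}\to0 \ \text{ as }\ N\to\infty,
\]
which, by the reduction in the first paragraph, proves the claim for every $p\ge1$. I do not expect a serious obstacle here; the only point that requires care is to carry out the energy estimate directly on \eqref{D-GMNSE}, so that the resulting bound is genuinely $N$-independent — using, for instance, the local Lipschitz constants from \eqref{BN-L} would introduce an $N$-dependence that is useless in this limit. (Equivalently, one may split $(0,T)$ into the set $\{s:\|\u^N(s)\|_{\L^4(\mathcal{O})}\le N\}$, where the integrand vanishes, and its complement, whose Lebesgue measure is $O(N^{-8/3})$ by Chebyshev's inequality.)
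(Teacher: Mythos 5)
Your proposal is correct and follows essentially the same route as the paper: the $N$-independent energy bound in $\mathrm{L}^\infty(0,T;\H)\cap\mathrm{L}^2(0,T;\V)$, Ladyzhenskaya's inequality to get a uniform $\mathrm{L}^{8/3}(0,T;\L^4(\mathcal{O}))$ bound, a Chebyshev-type estimate in $N^{-8/3}$, and the reduction of general $p$ to $p=1$ via $0\le 1-F_N\le 1$. The paper phrases the Chebyshev step through the measure of the exceptional set $I_N=\{s:\|\u^N(s)\|_{\L^4(\mathcal{O})}\ge N\}$ rather than your pointwise bound $1-F_N(r)\le (r/N)^{8/3}$, but as you note yourself these are the same argument.
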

\begin{proof}
		Since $\x^{N}\xrightharpoonup{} \x^*$ weakly in $\H$ as $N\to\infty$, it implies that $\x^N$ is a bounded sequence in $\H$, that is, there exists a constant $M>0$ such that $\|\x^N\|_{\H}\leq M$, for every $N>0$.
	
	A calculation similar to \eqref{S4} (for $t=T$ and $\mathfrak{Z}\equiv\boldsymbol{0}$) gives 
	\begin{align}\label{UE1-N}
		\sup_{s\in [0, T]}\|\u^N(s)\|_{\H}^2 + \int_{0}^{T}\|\u^N(s)\|_{\V}^2\d s &\leq  	\|\x^{N}\|^2_{\H}   + CT \|\f\|^2_{\H^{-1}(\mathcal{O})} \nonumber \\
	& 	\leq  	M^2   + CT \|\f\|^2_{\H^{-1}(\mathcal{O})} =: K_{T}.
	\end{align}
Let us denote 
\begin{align}
	I_{N}=\{s\in(0,T):\|\u^N(s)\|_{\L^4(\mathcal{O})}\geq N\},
\end{align}
and $|I_N|$ the Lebesgue measure on $I_N$.  Then
\begin{align*}
	|I_N|N^{\frac83} & \leq \int_{0}^{T} \|\u^N(s)\|^{\frac83}_{\L^4(\mathcal{O})}\d s 
	  \leq \int_{0}^{T}\|\u^N(s)\|^{\frac23}_{\H} \|\u^N(s)\|^{2}_{\V}\d s
	\nonumber\\ & \leq \sup_{s\in [0, T]} \|\u^N(s)\|^{\frac23}_{\H} \int_{0}^{T} \|\u^N(s)\|^{2}_{\V}\d s 
	  \leq (K_T)^{\frac43},
\end{align*}
and so
\begin{align*}
	|I_N| \leq \left[\frac{K_T}{N^{2}}\right]^{\frac{4}{3}} \to 0 \;\;\; \text{	as  }\;\;\; N\to\infty. 
\end{align*}
Now, as
\begin{align*}
T - |I_N| = \int_{[0,T]\backslash I_N}F_N\left(\|\u^N(s)\|_{\L^4(\mathcal{O})}\right) \d s \leq \int_{0}^{T} F_N\left(\|\u^N(s)\|_{\L^4(\mathcal{O})}\right) \d s \leq T,
\end{align*}
we infer that 
\begin{align*}
\int_{0}^{T} F_N\left(\|\u^N(s)\|_{\L^4(\mathcal{O})}\right) \d s \to \int_{0}^{T} 1 \d s \;\;\; \text{	as  }\;\;\; N\to\infty. 
\end{align*}
But since $0\leq F_N\left(\|\u^N(s)\|_{\L^4(\mathcal{O})}\right)\leq 1$, so
\begin{align*}
	\int_{0}^{T} \left|1-F_N\left(\|\u^N(s)\|_{\L^4(\mathcal{O})}\right)\right|\d s =  	\int_{0}^{T} \left(1-F_N\left(\|\u^N(s)\|_{\L^4(\mathcal{O})}\right)\right)\d s  \to 0 \;\;\; \text{	as  }\;\;\; N\to\infty. 
\end{align*}
Finally, using the fact that $\left|1-F_N\left(\|\u^N(s)\|_{\L^4(\mathcal{O})}\right)\right|\leq 1$, we deduce 
\begin{align*}
	\int_{0}^{T} \left|1-F_N\left(\|\u^N(s)\|_{\L^4(\mathcal{O})}\right)\right|^p\d s & =  	\int_{0}^{T} \left|1-F_N\left(\|\u^N(s)\|_{\L^4(\mathcal{O})}\right)\right|^{p-1}\left|1-F_N\left(\|\u^N(s)\|_{\L^4(\mathcal{O})}\right)\right|\d s 
	\nonumber\\ & \leq  	\int_{0}^{T} \left|1-F_N\left(\|\u^N(s)\|_{\L^4(\mathcal{O})}\right)\right|\d s 
	\nonumber\\ & \to 0 \;\;\; \text{	as  }\;\;\; N\to\infty,
\end{align*}
for any $p>1$.
\end{proof}

Using the result from Lemma \ref{FN-Lemma}, we are now ready to provide a  proof of Theorem \ref{Con-N}. 

\begin{proof}[Proof of Theorem \ref{Con-N}]
	From \eqref{UE1-N}, we infer that the set $\{\u^N\}_{N>0}$ is a bounded in $\mathrm{L}^{\infty}(0,T;\H)\cap\mathrm{L}^{2}(0,T;\V).$ 
	
		For any arbitrary element $\boldsymbol{\psi}\in\V$, using H\"older's inequality and Sobolev's embedding, we have from \eqref{D-GMNSE} that
	\begin{align*}
		\left|\left\langle\frac{\d\u^N}{\d t},\boldsymbol{\psi}\right\rangle\right|
		 &\leq \nu\left|(\nabla\u^N,\nabla\boldsymbol{\psi})\right|+\left|F_{N}(\|\u^N\|_{\L^4(\mathcal{O})})   \cdot b(\u^N,\boldsymbol{\psi},\u^N)\right| +\left|\langle\f,\boldsymbol{\psi}\rangle\right| 
		\nonumber\\&\leq C\|\u^N\|_{\V}\|\boldsymbol{\psi}\|_{\V}
		+  \|\u^N\|_{\L^{4}(\mathcal{O})}^{2}\|\boldsymbol{\psi}\|_{\V} +\|\f\|_{\H^{-1}(\mathcal{O})}\|\boldsymbol{\psi}\|_{\V}
		\nonumber\\&\leq C\left[\|\u^N\|_{\V}
		+  \|\u^N\|_{\H}^{\frac12}\|\u^N\|_{\V}^{\frac32} +\|\f\|_{\H^{-1}(\mathcal{O})}\right]\|\boldsymbol{\psi}\|_{\V},
	\end{align*}
which implies 
	\begin{align*}
		\int_{0}^{T}\left\|\frac{\d\u^N(s)}{\d t}\right\|^{\frac{4}{3}}_{\V^{\prime}}\d s
		&\leq C\int_{0}^{T}\|\u^N(s)\|_{\V}^{\frac{4}{3}}\d s
		+  \sup_{s\in [0, T]}\|\u^N(s)\|_{\H}^{\frac23}\int_{0}^{T}\|\u^N(s)\|_{\V}^{2}\d s + T \|\f\|_{\H^{-1}(\mathcal{O})}^{\frac{4}{3}}\nonumber\\
		&\leq CT^{\frac13}K_T
		+ (K_T)^{\frac43} + T \|\f\|_{\H^{-1}(\mathcal{O})}^{\frac{4}{3}}<+\infty.
	\end{align*}
	In view of the Banach-Alaoglu theorem,  there exists a sequence $\{\u^{N_j}(\cdot)\}_{j=1}^{\infty}$ and an element $\u^*(\cdot)\in \mathrm{L}^{\infty}(0,T;\H)\cap \mathrm{L}^{2}(0,T;\V)$ with $\frac{\d\u^*(\cdot)}{\d t}\in\mathrm{L}^{2}(0,T;\V^{\prime})$ such that  
	\begin{align}
		\u^{N_j}\xrightharpoonup{w^*}&\ \u^* &&\text{ in }\ \ \ \ \	\mathrm{L}^{\infty}(0,T;\H),\label{S7-N}\\
		\u^{N_j}\xrightharpoonup{w}&\ \u^*   && \text{ in } \ \ \ \ \ \mathrm{L}^{2}(0,T;\V),\label{S8-N}\\
		\u^{N_j}\xrightharpoonup{w}&\ \u^*   && \text{ in } \ \ \ \ \ \mathrm{L}^{2}(0,T;\L^2_{\mathrm{loc}}(\mathcal{O})),\label{S9-N}\\
		\frac{\d \u^{N_j}}{\d t}\xrightharpoonup{w}&\frac{\d \u^*}{\d t}   && \text{ in }  \ \ \ \ \ \mathrm{L}^{\frac43}(0,T;\V'),\label{S8d-N}
	\end{align}
	 as $N_j\to\infty$.

	From \cite[Ch. III, Lemma 3.2]{Temam} along with convergences in \eqref{S7-N}-\eqref{S9-N}, we infer that for any $\mathcal{O}_1\subset\mathcal{O}$, which is bounded, and  $\psi: \mathcal{O} \to \R^3$ which is a $\mathrm{C}^1$-class function such that $\mathrm{supp} (\psi ( \cdot)) \subset \mathcal{O}_1,$ and 
	$ \sup\limits_{1\leq i, j\leq 3} \sup\limits_{ x\in  \mathcal{O}_1} |D_i \psi^j (x)| = C < \infty,$ 
	\begin{align}\label{convergence_b*}
		\int_{0}^{T}  b(\u^{N_j}(t), \u^{N_j}(t), \psi) \d t \to \int_{0}^{T} b(\u^*(t), \u^*(t), \psi) \d t\ \text{ as }\ n\to\infty.
	\end{align}
	Since $\u^{N_j},\u^* \in \mathrm{L}^{\infty}(0,T;\H)\cap\mathrm{L}^{2}(0,T;\V)$, we can find a  constant $L>0,$ such that
	\begin{align*}
		\|\u^{N_j}\|_{\mathrm{L}^{\infty}(0,T;\H)}^{\frac12} + \|\u^*\|_{\mathrm{L}^{\infty}(0,T;\H)}^{\frac12} + \|\u^{N_j}\|_{\mathrm{L}^{2}(0,T;\V)}^{\frac32} + \|\u^*\|_{\mathrm{L}^{2}(0,T;\V)}^{\frac32}  \leq L.
	\end{align*}
	Let us choose $\epsilon > 0$. Also, let us choose and fix $\w \in \V$. By a standard regularization method, there exists a function $\psi$ satisfying the above assumptions such that $$\|\w- \psi\|_{\V} < \frac{\epsilon}{6L^2T^{\frac14}}.$$ Making use of \eqref{convergence_b*}, we can find $M_{\epsilon} \in \mathbb{N}$ such that  
	$$\left|\int_{0}^{T} b(\u^{N_j}(t), \u^{N_j}(t), \psi) \d t - \int_{0}^{T} b(\u^*(t), \u^*(t), \psi) \d t\right| < \frac{\epsilon}{3},$$ for all $m \geq M_{\epsilon}.$
	Hence,  for $m > M_{\epsilon},$ using H\"older's and interpolation inequalities, we obtain 
	\begin{align*}
		& \left|\int_{0}^{T} b(\u^{N_j}(t), \u^{N_j}(t), \w)  \d t - \int_{0}^{T} b(\u^*(t), \u^*(t),\w) \d t\right|\\
		&	\leq \left|\int_{0}^{T} b(\u^{N_j}(t), \u^{N_j}(t), \w-\psi) \d t\right| + \left|\int_{0}^{T} b(\u^*(t), \u^*(t),\w- \psi) \d t\right| \\
		&\quad+ \left|\int_{0}^{T} b(\u^{N_j}(t), \u^{N_j}(t), \psi) \d t - \int_{0}^{T} b(\u^*(t), \u^*(t), \psi) \d t\right|\\
		&	= \left|\int_{0}^{T} b(\u^{N_j}(t), \w-\psi, \u^{N_j}(t)) \d t\right| + \left|\int_{0}^{T} b(\u^*(t), \w-\psi, \u^*(t)) \d t\right| \\
		&\quad+ \left|\int_{0}^{T} b(\u^{N_j}(t), \u^{N_j}(t), \psi) \d t - \int_{0}^{T} b(\u^*(t), \u^*(t), \psi) \d t\right|
		\\
		&	<  \frac{\epsilon}{3} + \int_{0}^{T} \|\u^{N_j}(t)\|^2_{{\L}^{4}(\mathcal{O})} \|\w-\psi\|_{\V} \d t
		+ \int_{0}^{T} \|\u^*(t)\|_{{\L}^{4}(\mathcal{O})}^2  \|\w-\psi\|_{\V}\d t
		\\
		&	\leq  \frac{\epsilon}{3} + 2 \|\w-\psi\|_{\V} \int_{0}^{T} \|\u^{N_j}(t)\|_{\H}^{\frac12}\|\u^{N_j}(t)\|_{\V}^{\frac32}  \d t
		 + 2 \|\w-\psi\|_{\V} \int_{0}^{T} \|\u^*(t)\|_{\H}^{\frac12}  \|\u^*(t)\|_{\V}^{\frac32} \d t
		\\
		&\leq \frac{\epsilon}{3} + 2 \left[\sup_{t\in [0, T]}\|\u^{N_j}(t)\|_{\H}^{\frac12}   \left(\int_{0}^{T}\hspace{-3mm}\|\u^{N_j}(t)\|^{2}_{\V}\d t\right)^{\frac{3}{4}} 
 +   \sup_{t\in [0, T]}\|\u^*(t)\|_{\H}^{\frac12}   \left(\int_{0}^{T}\hspace{-3mm}\|\u^*(t)\|^{2}_{\V}\d t\right)^{\frac{3}{4}} \right]  \|\w- \psi\|_{\V} {T}^{\frac{1}{4}}
		\nonumber\\&<\epsilon,
	\end{align*}
	which implies that 
\begin{align}\label{LN1}
	\int_{0}^{t}  b(\u^{N_j}(s), \u^{N_j}(s), \w) \d s \to \int_{0}^{t} b(\u^*(s), \u^*(s), \w) \d s \ \text{ as }\ N_j\to\infty,
\end{align}
for all $t\in[0,T]$ and $\w\in \V.$

Our next aim is to show that 
\begin{align}\label{LN}
	& \int_{0}^{T}F_{N_j}(\|\u^{N_j}(s)\|_{\L^4(\mathcal{O})})b(\u^{N_j}(s), \u^{N_j}(s), \w) \d s
	\to \int_{0}^{T} b(\u^*(s), \u^*(s), \w) \d s,
\end{align}
as $N_j\to \infty$.

We consider
\begin{align}\label{LN2}
	& \left| \int_{0}^{T}F_{N_j}(\|\u^{N_j}(s)\|_{\L^4(\mathcal{O})})b(\u^{N_j}(s), \u^{N_j}(s), \w)\d s -\int_{0}^{T}
	 b(\u^*(s), \u^*(s), \w) \d s \right|
	 \nonumber\\ & \leq  \left| \int_{0}^{T}\bigg[F_{N_j}(\|\u^{N_j}(s)\|_{\L^4(\mathcal{O})})-1\bigg]b(\u^{N_j}(s), \u^{N_j}(s), \w) \d s \right| 
	 \nonumber\\ & \quad + \left| \int_{0}^{T}b(\u^{N_j}(s), \u^{N_j}(s), \w)\d s -
	\int_{0}^{T} b(\u^*(s), \u^*(s), \w) \d s \right|
	 \nonumber\\ & \leq \underbrace{\left[\int_{0}^{T}\bigg|F_{N_j}(\|\u^{N_j}(s)\|_{\L^4(\mathcal{O})})-1\bigg|^4\d s \right]^{\frac14}  \left[\int_{0}^{T}\bigg|b(\u^{N_j}(s), \u^{N_j}(s), \w)  \bigg|^{\frac43} \d s\right]^{\frac34} }_{:= \B_1(N_j)}
	 \nonumber\\ & \quad + \underbrace{\left| \int_{0}^{T}b(\u^{N_j}(s), \u^{N_j}(s), \w)\d s -
	\int_{0}^{T} b(\u^*(s), \u^*(s), \w) \d s \right|}_{:= \B_2(N_j)}.
\end{align}
Now, using \eqref{b0}, H\"older's inequality, \eqref{lady} and \eqref{UE1-N}, we obtain 
\begin{align}\label{LN3}
	\int_{0}^{T}\bigg|b(\u^{N_j}(s), \u^{N_j}(s), \w)  \bigg|^{\frac43}  \d s &  \leq  \int_{0}^{T}\|\u^{N_j}(s)\|_{\L^4(\mathcal{O})}^{\frac83} \|\w\|_{\V}^{\frac43}  \d s 
	\nonumber\\ & \leq 2^{\frac43} \|\w\|_{\V}^{\frac43} \int_{0}^{T} \|\u^{N_j}(s)\|_{\H}^{\frac23}\|\u^{N_j}(s)\|_{\V}^{2}   \d s 
	\nonumber\\ & \leq 2^{\frac43} \|\w\|_{\V}^{\frac43} [K_T]^{\frac43}.
\end{align}
In view of Lemma \ref{FN-Lemma}, \eqref{LN3} and  \eqref{LN1},  we obtain \eqref{LN} from \eqref{LN2}, as required. Making use of the convergences in \eqref{S8-N}, \eqref{S8d-N} and \eqref{LN}, we conclude that $\u^*$ is a weak solution to the system \eqref{D-NSE}. 

We know that $\u^N$ satisfies the energy equality 
\begin{align}\label{A18}
		&\|\u^N(t)\|_{\H}^2+2\nu\int_0^t\|\u^N(s)\|_{\V}^2\d s   = \|\x^N\|_{\H}^2 +2\int_0^t\langle\f,\u^N(s)\rangle\d s,
\end{align}
for all $t\in[0,T]$. Using the strong convergence $\x^N\to\x$ strongly in $\H$, the weak convergences \eqref{S7-N} and \eqref{S8-N} and the weakly lowersemicontinuity of  the norms, one can take limit infimum on  both sides of \eqref{A18} to obtain that $\u^*$ satisfies the energy inequality \eqref{A5}. 
This completes the proof.
\end{proof}
\end{appendix}

	\medskip\noindent
	{\bf Acknowledgments:}    K. Kinra would like to thank the Department of Atomic Energy, Government of India, for financial assistance and the Tata Institute of Fundamental Research - Centre for Applicable Mathematics (TIFR-CAM) for providing a stimulating scientific environment and resources. M. T. Mohan would like to thank the Department of Science and Technology (DST) Science \& Engineering Research Board (SERB), India for a MATRICS grant (MTR/2021/000066).

			\medskip\noindent
	\textbf{Data availability:} No data was used for the research described in the article.

			\medskip\noindent
			\textbf{Declarations}: During the preparation of this work, the authors have not used AI tools.
	
	\medskip\noindent
	\textbf{Conflict of interest:} The authors declare no conflict of interest.

\end{document}